\DeclareMathAlphabet{\skr}{LS1}{stixscr}{m}{n}
\newtheorem{theorem}{Theorem}[section]
\newtheorem{lemma}[theorem]{Lemma}
\newtheorem{proposition}[theorem]{Proposition}
\newtheorem{corollary}[theorem]{Corollary}
\theoremstyle{definition}
\newtheorem{definition}[theorem]{Definition}
\newtheorem{construction}[theorem]{Construction}
\newtheorem{example}[theorem]{Example}
\newtheorem{remark}[theorem]{Remark}
\numberwithin{equation}{theorem}
\def\vector2#1#2{\left(\begin{array}{c} #1 \\ #2 \end{array}\right)}
\def\Cl{{\rm Cl}}
\def\CC{{\mathbb C}}
\def\KK{{\mathbb K}}
\def\TT{{\mathbb T}}
\def\ZZ{{\mathbb Z}}
\def\RR{{\mathbb R}}
\def\QQ{{\mathbb Q}}
\def\PP{{\mathbb P}}
\def\XX{{\mathbb X}}
\def\Mat{{\rm Mat}}
\def\Chi{{\mathbb X}}
\def\quot{/\!\!/}
\def\conv{{\rm conv}}
\def\im{{\rm im}}
\def\bangle#1{{\langle #1 \rangle}}
\def\eins{{\mathbf 1}}
\def\GL{{\rm GL}}
\def\GL{{\rm GL}}
\def\Spec{{\rm Spec}}
\def\cone{{\rm cone}}
\def\pr{{\rm pr}}
\def\lcm{{\rm lcm}}
\def\im{{\rm im}}
\def\lin{{\rm lin}}
\DeclareMathOperator{\vol}{\mathrm{vol}}
\DeclareMathOperator{\Eff}{\mathrm{Eff}}
\DeclareMathOperator{\Mov}{\mathrm{Mov}}
\DeclareMathOperator{\SAmple}{\mathrm{SAmple}}
\DeclareMathOperator{\Ample}{\mathrm{Ample}}
\title[K\"ahler-Einstein metrics,  K\"ahler-Ricci solitons, Sasaki-Einstein metrics \hspace*{.3cm}]{Log del Pezzo $\CC^*$-surfaces, K\"ahler-Einstein metrics, \\  K\"ahler-Ricci solitons and Sasaki-Einstein metrics}
\author[Daniel H{\"a}ttig, J\"urgen Hausen, Hendrik S{\"u}{\ss}]{Daniel H\"attig, J\"urgen Hausen, Hendrik S{\"u}{\ss}}
\address{Mathematisches Institut, Universit\"at T\"ubingen,
Auf der Morgenstelle 10, 72076 T\"ubingen, Germany}
\email{daniel.haettig@uni-tuebingen.de}
\address{Mathematisches Institut, Universit\"at T\"ubingen,
Auf der Morgenstelle 10, 72076 T\"ubingen, Germany}
\email{juergen.hausen@uni-tuebingen.de}
\address{Institut f\"ur Mathematik, Universit\"at Jena,
Ernst-Abbe-Platz 1-2, 07743 Jena, Germany}
\email{hendrik.suess@uni-jena.de}
\subjclass[2010]{14L30,14J26}
\begin{document}

\begin{abstract}
We consider two classes of non-toric log
del Pezzo $\CC^*$-surfaces: on the one side
the 1/3-log canonical ones and on the other
side those of Picard number one and Gorenstein
index at most 65.
In each of the two classes we figure
out the surfaces admitting a K\"ahler-Einstein
metric, a K\"ahler-Ricci soliton and those
allowing a Sasaki-Einstein metric
on the link of their anticanonical cone.
We encounter examples that 
admit a K\"{a}hler-Ricci soliton
but no Sasaki-Einstein cone link metric.
\end{abstract}

\maketitle

\section{Introduction}

A \emph{log del Pezzo surface} is a complex
projective surface that admits an ample
anticanonical divisor and has at most
singularities (locally) of the form $\CC^2/G$
with a finite subgroup $G \subseteq \GL_2(\CC)$.
The smooth del Pezzo surfaces are classically known:
these are $\PP_1 \times \PP_1$,
the projective plane $\PP_2$ and its blowing
up in up to $8$ points in general position.
The singular log del Pezzo surfaces comprise
infinitely many families and form an intensely studied
class in algebraic geometry; we just
mention~\cites{HiWa,AlNi,Nak,FuYa} representing
the state of the art concerning general
classification results.
Having at most finite quotient singularities,
the log del Pezzo surfaces are orbifolds and
thus also serve as an example class for
differential geometry.

A classical question is which of the log del Pezzo
surfaces allow a \emph{K\"ahler-Einstein metric},
that means a K\"ahler orbifold metric $g$ such
that the associated K\"ahler form~$\omega_g$ equals
its Ricci form $\mathrm{Ric}(\omega_g)$.
Concerning the smooth del Pezzos surfaces,
one finds K\"ahler-Einstein metrics on
$\PP_1 \times \PP_1$,
the projective plane $\PP_2$ and its blowing
up in $k = 3, \ldots, 8$ points in
general position 
as shown by Tian and Yau~\cites{TiYa,Ti1}.
The case of quasismooth del Pezzo
surfaces coming anticanonically embedded
into a three-dimensional weighted projective
space is understood as well due
to~\cites{KoJo,Arau,ChPaSh1,ChPaSh2}.
Dropping in the latter setting the assumption
of being anticanonically embedded or passing
to complete intersections instead of
considering just hypersurfaces
we arrive at open questions; see for
instance~\cites{KiPa,KiWo,LiuPe}.
Beyond the K\"ahler-Einstein metrics we have
the concept of a \emph{K\"ahler-Ricci soliton},
that means a K\"ahler metric $g$ together with
a vector field $\xi$ such that the
K\"ahler form~$\omega_g$ equals 
$\mathrm{Ric}(\omega_g)- L_\xi(\omega_g)$,
the difference of its Ricci form and its
Lie derivative with respect to $\xi$;
we refer to~\cite{CaSu} for the case of
Gorenstein log del Pezzo surfaces.

A related, also classical question concerns
the \emph{anticanonical link $S_X$}
of a log del Pezzo surface~$X$, that means the real
five-dimensional orbifold~$S_X$ given by
the boundary of a suitable neighbourhood of the apex
in the affine cone over $X$ defined by its anticanonical
divisor.
Here one asks if the anticanonical cone admits
a Ricci-flat K\"ahler metric, which then restricts
to a \emph{Sasaki-Einstein metric} on $S_X$, turning
it into a Sasaki-Einstein orbifold. 
Starting with the smooth K\"ahler-Einstein del Pezzo
surfaces, one obtains via this procedure precisely
the simply connected, regular, smooth
Sasaki-Einstein 5-folds, and,  all other
regular, smooth Sasaki-Einstein 5-folds arise as
cyclic quotients from the latter
ones~\cites{BFGK,FrKa}.
Beyond that but in the same spirit, the classification
of the simply connected
rational homology 5-spheres admitting a quasiregular
Sasaki-Einstein metric was recently completed by
detecting K\"ahler-Einstein metrics on certain singular
del Pezzo surfaces in a weighted projective
space~\cites{BoGa,BoNa,Kol1,Kol2,PaWo}.

In the present article, we consider log del Pezzo
surfaces $X$ that come with an effective action
of the multiplicative group $\CC^*$ and ask for
existence K\"ahler-Einstein metrics on $X$ and
Sasaki-Einstein metrics on the link $S_X$.
The case of toric log del Pezzo surfaces fits into
the larger class of toric Fano varieties and is
well understood~\cites{BoLe,CFO,FOW,Ma,ShiZhu,WaZh}:
As any toric Fano variety,
$X$ admits a K\"ahler-Einstein metric if and only
if the dual of its Fano polytope has the origin
as its barycenter.
Moreover, again as any toric Fano variety, $X$
admits a K\"ahler-Ricci soliton and a
Sasaki-Einstein metric on the link $S_X$.

We focus on the non-toric case,
that means that the acting~$\CC^*$ is a maximal
torus in the automorphism group of~$X$.
In this setting we benefit from the combinatorial
approaches~\cites{FlZa,AlHa,AlHaSu,HaSu,HaHe,HaWr}.
We give a brief introduction to the necessary background
in Section~\ref{sec:tvarlowcpl}
and then present explicit combinatorial criteria
for deciding existence of K\"ahler-Einstein metrics
or K\"ahler-Ricci solitons on~$X$, see
Theorems~\ref{thm:ke-crit} and~\ref{thm:krs-crit},
and a necessary criterion for Sasaki-Einstein metrics
on~$S_X$, see Theorem~\ref{thm:se-crit},
all allowing an efficient implementation.

The intention is to present a systematic view
on the numbers of families admitting
K\"ahler-Einstein metrics, K\"ahler-Ricci solitons
or possibly Sasaki-Einstein metrics on the link.
For this, we filter the (infinite) class of
families of non-toric log del Pezzo $\CC^*$-surfaces
by finite subclasses defined via suitable conditions
on the singularities.
In our first result, we consider $1/k$-log canonicity
and deliver for $k=1,2,3$, using the classification~\cite{HH}.
Recall that a variety is \emph{$\varepsilon$-log canonical}
if for any of its resolutions of singularities
the discrepancies are bounded from below by $\varepsilon - 1$.

\begin{theorem}
\label{thm:main1}
The numbers
$\mu_{\scriptscriptstyle \mathrm{KE}}(d)$,
$\mu_{\scriptscriptstyle \mathrm{KRS}}(d)$
$\mu_{\scriptscriptstyle \mathrm{SE}}(d)^*$
and $\mu(d)$
of $d$-dimensional families
of non-toric $1/3$-log canonical
del Pezzo $\CC^*$-surfaces
with a K\"ahler-Einstein metric,
a K\"ahler-Ricci soliton, 
possibly a Sasaki-Einstein metric on
the anticanonical link
and in total are given as

\medskip

\begin{center}
\begin{tabular}{c|c|c|c|c|c|c|c|c|c|c|c}
$d$  & $0$ & $1$ & $2$ & $3$ & $4$ & $5$ & $6$ & $7$ & $8$ & $9$ & $\ge 10$
\\
\hline
$\mu_{\scriptscriptstyle \mathrm{KE}}(d)$ & 227 & 177 & 56 & 33 & 12 & 9 & 4 & 3 & 1 & 1 & 0
\\
\hline
$\mu_{\scriptscriptstyle \mathrm{KRS}}(d)$ & 19695 & 3013 & 473 & 111 & 34 & 18 & 9 & 5 & 2 & 1 & 0
\\
\hline
$\mu_{\scriptscriptstyle \mathrm{SE}}(d)^*$ & 20230 & 3038 & 473 & 111 & 34 & 18 & 9 & 5 & 2 & 1 & 0
\\
\hline
$\mu(d)$ & 65022 & 12402 & 3190 & 917 & 254 & 64 & 14 & 6 & 2 & 1 & 0
\end{tabular}
\end{center}

\medskip
\noindent
Altogether, among the 81872 families
of non-toric $1/3$-log canonical del Pezzo
$\CC^*$-surfaces, we find 523 admitting a 
K\"ahler-Einstein metric, 23361 a  K\"ahler-Ricci
soliton and 23921 that possibly admit a
Sasaki-Einstein metric on the anticanonical link.
\end{theorem}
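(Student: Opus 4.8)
The plan is to combine the classification of non-toric log del Pezzo $\CC^*$-surfaces from~\cite{HH} with the combinatorial criteria of Sections~\ref{sec:tvarlowcpl} and the following ones, and to carry out the resulting finite check by computer. First I would recall from~\cite{HH} the explicit list of defining combinatorial data of all non-toric log del Pezzo $\CC^*$-surfaces and extract the sublist of those that are $1/3$-log canonical. Each family is encoded by its coloured data, i.e.\ the $\CC^*$-weights together with the anticanonical moment polytope and the orbifold structure along the fixed points, and the $\varepsilon$-log canonicity threshold can be read off directly from this data; so producing the filtered list, which by the classification consists of exactly the $81872$ families named in the statement, is routine, and its decomposition by the dimension $d$ of the family gives the row $\mu(d)$.

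Next, for each of these $81872$ families I would apply the three criteria in turn. For the K\"ahler-Einstein property, Theorem~\ref{thm:ke-crit} reduces the question to an explicit condition on the moment polytope (vanishing of the relevant Futaki-type integral, equivalently a barycenter condition), which can be evaluated exactly over $\QQ$; running this over the list and sorting by $d$ produces the row $\mu_{\scriptscriptstyle \mathrm{KE}}(d)$ with total $523$. For the K\"ahler-Ricci soliton property, Theorem~\ref{thm:krs-crit} asks for the existence of a soliton candidate vector field solving a transcendental moment equation together with the associated positivity condition; solving this family by family yields $\mu_{\scriptscriptstyle \mathrm{KRS}}(d)$ with total $23361$. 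Finally, Theorem~\ref{thm:se-crit} provides a necessary condition for a Sasaki-Einstein metric on the anticanonical link $S_X$; screening the list with it gives the upper bounds $\mu_{\scriptscriptstyle \mathrm{SE}}(d)^*$ with total $23921$, the asterisk recording that these are only obstructions, so that the true numbers are at most the listed ones. Consistency of the output can be checked against the known toric and Gorenstein cases recalled in the introduction, in particular against~\cite{CaSu}.

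The main obstacle is not conceptual but computational. One has to manage the size of the classification list and organise the $d$-indexed bookkeeping so that no family is miscounted; more delicately, the K\"ahler-Ricci soliton criterion of Theorem~\ref{thm:krs-crit} involves a transcendental equation for the soliton vector field, so the implementation must either locate its solution by certified interval arithmetic or reduce the existence question to a sign condition checkable exactly over $\QQ$. Keeping all of the geometric input consistent across the whole list and verifying that the three screenings are applied to precisely the same $81872$ families is where the care lies; once this is done, the stated table and totals follow by direct enumeration.
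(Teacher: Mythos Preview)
Your proposal is correct and follows essentially the same approach as the paper: take the classification data from~\cite{HH}, then run the combinatorial criteria of Theorems~\ref{thm:ke-crit}, \ref{thm:krs-crit} and~\ref{thm:se-crit} over the resulting finite list by computer, using interval arithmetic for the transcendental soliton equation as in Examples~\ref{example:lbsp6}--\ref{example:lbsp8}. The only cosmetic difference is that the paper phrases everything in terms of the defining matrices $(A,P)$ of Constructions~\ref{constr:RAP} and~\ref{constr:RAPdown} rather than ``coloured data'' and moment polytopes, and the $1/3$-log canonical list is taken directly from~\cite{HH} rather than extracted as a filtering step.
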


In our second result we consider the
non-toric log del Pezzo $\CC^*$-surfaces of
Picard number one and filter by the
Gorenstein index, as done in the
classification~\cite{HHHS}.
Recall that the \emph{Gorenstein index}
is the smallest positive integer $\iota$ such
that a $\iota$-fold canonical divisor is
Cartier.
The overlap with the class treated in the first
theorem consists of 253 families, 246 of them
sporadic and 7 of dimension one.

\goodbreak

\begin{theorem}
\label{thm:main2}
The numbers
$\nu_{\scriptscriptstyle \mathrm{KE}}(d)$,
$\mu_{\scriptscriptstyle \mathrm{KRS}}(d)$,
$\nu_{\scriptscriptstyle \mathrm{SE}}(d)^*$
and $\nu(d)$
of $d$-dimensional families
of non-toric log del Pezzo
$\CC^*$-surfaces of Picard number one
and Gorenstein index at most 65
with a K\"ahler-Einstein metric,
a K\"ahler-Ricci soliton,
possibly a Sasaki-Einstein metric on
the anticanonical link 
and in total are given~as

\medskip

\begin{center}
\begin{tabular}{c|c|c|c}
$d$  & $0$ & $1$ & $\ge 2$
\\
\hline
$\nu_{\scriptscriptstyle \mathrm{KE}}(d)$ & 255 & 37 & 0
\\
\hline
$\nu_{\scriptscriptstyle \mathrm{KRS}}(d)$ & 8167 & 443 & 0
\\
\hline
$\nu_{\scriptscriptstyle \mathrm{SE}}(d)^*$ & 8302 & 443 & 0
\\
\hline
$\nu(d)$ & 19301 & 443 & 0
\end{tabular}
\end{center}




\medskip
\noindent
Altogether, among the 19744
families of non-toric log del Pezzo
$\CC^*$-surfaces of Picard number one
and Gorenstein index at most 65,
there are 292 admitting a  K\"ahler-Einstein
metric, 8610 a K\"ahler-Ricci soliton
and 8745 that possibly admit a Sasaki-Einstein
metric on the anticanonical link.
Filtering by the Gorenstein index yields
the following picture:
\begin{center}

\begin{tikzpicture}[scale=0.7]
\begin{axis}[
    xmin = 0, xmax = 65,
    ymin = 0, ymax = 20000,
    width = \textwidth,
    height = 0.75\textwidth,
    xtick distance = 5,
    ytick distance = 1000,
    xlabel = {Gorenstein index $\iota$},
    ylabel = {\#surfaces of index $\le \iota$}
]
\addplot[only marks] table {gi-le-65-sums.txt};
\end{axis}
\end{tikzpicture}

\end{center}

\end{theorem}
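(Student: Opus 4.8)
The plan is to reduce the assertion to a finite computation by combining the classification of the surfaces in question with the combinatorial existence criteria of Section~\ref{sec:tvarlowcpl}. By~\cite{HHHS}, the non-toric log del Pezzo $\CC^*$-surfaces of Picard number one and Gorenstein index at most $65$ come in $19744$ families, each presented by explicit defining data from which one directly reads off the $\CC^*$-surface together with the rational polytope attached to its anticanonical class. Thus the table is obtained by running, for every entry of this list, the decision procedures of Theorems~\ref{thm:ke-crit}, \ref{thm:krs-crit} and~\ref{thm:se-crit}, sorting the outcomes by the dimension $d$ of the family, and counting; the displayed histogram is then merely the cumulative count of the same per-family data, now re-sorted by the Gorenstein index~$\iota$.

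First I would specialize the three criteria to Picard number one. In this case the polytope governing the anticanonical divisor takes a particularly simple, explicitly parametrized form, so that the barycenter condition of Theorem~\ref{thm:ke-crit} turns into a rational identity or inequality in the defining data and is decided exactly. For Theorem~\ref{thm:krs-crit} one first has to pin down the candidate soliton vector field; in the $\CC^*$-surface situation it is the unique zero of an explicit one-variable function assembled from the moment data, after which one tests the accompanying positivity condition. This is the only step where a transcendental quantity enters, and it is made rigorous by evaluating that function at rational points bracketing the root and tracking signs, so that the yes/no answer is certified. Theorem~\ref{thm:se-crit} provides only a necessary condition; its evaluation is again an exact computation on the Reeb cone, and accordingly the resulting number $\nu_{\scriptscriptstyle\mathrm{SE}}(d)^*$ is an upper bound, which is what the asterisk records.

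Executing this produces the two tables, and the visible inclusions among the four counts provide a first internal check: a K\"ahler--Einstein metric is in particular a K\"ahler--Ricci soliton and forces a Sasaki--Einstein metric on the link, so the K\"ahler--Einstein count must not exceed the other two. A further, independent check is the overlap with Theorem~\ref{thm:main1}: the $253$ families ($246$ sporadic, $7$ of dimension one) shared with the $1/3$-log canonical class are enumerated there by a different pass over the data, and the corresponding sub-counts must agree. Re-sorting the per-family list by $\iota$ and forming partial sums yields the plot.

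The step I expect to be the main obstacle is not conceptual but one of certified reliability at scale: guaranteeing that the sign determination for the soliton vector field in Theorem~\ref{thm:krs-crit} is decisive for each of the roughly $19\,000$ families, i.e.\ that no root falls so close to the relevant threshold that a rational bracketing fails to separate the cases, and that the list of~\cite{HHHS} is consumed with the correct normalizations so that families describing isomorphic surfaces are not miscounted. These risks are controlled by the redundancy already mentioned --- the independent overlap with Theorem~\ref{thm:main1}, the monotonicity of the histogram, and the a priori implications among the three metric conditions --- rather than by any single computation.
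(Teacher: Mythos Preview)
Your proposal is correct and follows essentially the same route as the paper: feed the classification data of~\cite{HHHS} through the combinatorial criteria of Theorems~\ref{thm:ke-crit}, \ref{thm:krs-crit} and~\ref{thm:se-crit} and tabulate the outcomes, with interval arithmetic certifying the transcendental sign checks as in Examples~\ref{example:lbsp7} and~\ref{example:lbsp8}. One small correction: the Sasaki--Einstein test of Theorem~\ref{thm:se-crit} is not a purely exact computation either---it too requires locating a root of a rational function and certifying a sign there, just as in the soliton case (see Example~\ref{example:lbsp8}); also, the criteria live in Section~\ref{sec:polystability}, not Section~\ref{sec:tvarlowcpl}.
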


The log del Pezzo $\CC^*$-surfaces from Theorems~\ref{thm:main1}
and~\ref{thm:main2} together with their properties presented
there (and more) are available via the database~\cite{HHS}.

Our approach to metrics and solitons runs via equivariant
\emph{$K$-polystability}, which in turn is defined in terms
of equivariant \emph{test configurations}; see
Sections~\ref{sec:testconf} and~\ref{sec:polystability}.
Propositions~\ref{prop:testconf-APS}
and~\ref{prop:all-test-conf}
extend results of~\cite{IlSu} on equivariant test
configurations from the projective Gorenstein canonical
case to the semi-projective setting without any
conditions on the singularities beyond normality.
This allows us to obtain the combinatorial criteria 
given in Theorems~\ref{thm:ke-crit},~\ref{thm:krs-crit}
and~\ref{thm:se-crit} mentioned above in an analogous
manner to~\cite[Thm.~4.12]{IlSu}.

\begin{remark}
\label{rem:se-nec}
The numbers~$\mu_{\scriptscriptstyle \mathrm{SE}}^*(d)$
and $\nu_{\scriptscriptstyle \mathrm{SE}}^*(d)$ in
Theorems~\ref{thm:main1} and~\ref{thm:main2}
count in fact the families having a $K$-polystable
anticanonical cone and thus provide 
upper bounds for the numbers of $d$-dimensional
families of del Pezzo $\KK^*$-surfaces with a
Sasaki-Einstein metric on the anticanonical link.
\end{remark}

Finally, we look at the relations between
existence of a K\"ahler-Ricci soliton on
a log del Pezzo $\CC^*$-surface $X$ and
existence of a Sasaki-Einstein metric on
its anticanonical link.
For smooth Fano varieties it is conjectured that
existence of a K\"ahler-Ricci soliton implies
existence of a Sasaki-Einstein metric on the
anticanonical link~\cite{MaNa}.
Whereas any non-toric
del Pezzo $\KK^*$-surfaces of Picard number
one and Gorenstein index at most 65 with a
K\"ahler-Ricci soliton admits a Sasaki-Einstein
metric on the anticanonical link,
we find in the higher Picard numbers among
the 1/3 log canonical ones 11 families
with K\"ahler-Ricci soliton admitting
no Sasaki-Einstein metric on the anticanonical
link.
For instance, the following weighted projective
surface is a counterexample.

\begin{theorem}
\label{thm:counterex}
$V(
z_4^2-z_2z_5,
\, z_1^2z_2^2z_4 + z_2^3z_4 + z_3z_5,
\, z_1^2z_2^3 + z_2^4  + z_3z_4
)
\subseteq
\PP_{1,2,3,5,8}$
is a log del Pezzo surface admitting
a K\"ahler-Ricci soliton but no 
Sasaki-Einstein metric on its
anticanonical link.
\end{theorem}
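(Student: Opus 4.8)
The plan is to put $X := V(z_4^2-z_2z_5,\, z_1^2z_2^2z_4+z_2^3z_4+z_3z_5,\, z_1^2z_2^3+z_2^4+z_3z_4) \subseteq \PP_{1,2,3,5,8}$ into the combinatorial description of log del Pezzo $\CC^*$-surfaces recalled in Section~\ref{sec:tvarlowcpl}. Concretely, one writes down the defining matrix $P$, equivalently the fan of the minimal resolution together with the $\CC^*$-action induced by the diagonal torus of the weighted projective space, and checks that the resulting surface is normal, non-toric, has at most log terminal singularities and ample anticanonical class; this identifies $X$ with one of the surfaces produced by the classification~\cite{HH}. From $P$ one then reads off all the data entering the criteria of Theorems~\ref{thm:krs-crit} and~\ref{thm:se-crit}: the anticanonical momentum polytope $\Delta \subseteq \RR^2$, a polygon since the complexity of the action is one, together with the multiplicities along its vertical boundary that encode the non-toric (multiple) fibres of the $\CC^*$-surface.

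For the existence of a Kähler-Ricci soliton I would invoke the combinatorial criterion of Theorem~\ref{thm:krs-crit}. This amounts to producing a soliton candidate: a one-parameter weight $\xi_0$ in the (one-dimensional) Lie algebra of the acting $\CC^*$ such that the barycenter of the Duistermaat--Heckman measure of the anticanonical polarization, twisted by the exponential weight $e^{\langle \xi_0,\,\cdot\,\rangle}$, lies on the torus-fixed axis; equivalently, $\xi_0$ is the unique zero of an explicit transcendental function of one real variable built from $\Delta$ and its vertical multiplicities. One then checks the accompanying polystability condition, which for a $\CC^*$-surface reduces, by Propositions~\ref{prop:testconf-APS} and~\ref{prop:all-test-conf}, to finitely many ``special'' equivariant test configurations and hence to finitely many explicit inequalities. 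Carrying this out produces $\xi_0 \neq 0$ together with polystability, so that $X$ admits a Kähler-Ricci soliton but, since $\xi_0 \neq 0$, no Kähler-Einstein metric.

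For the non-existence of a Sasaki--Einstein metric on the anticanonical link it suffices, by Remark~\ref{rem:se-nec}, to show that the anticanonical cone over $X$ is not $K$-polystable, and Theorem~\ref{thm:se-crit} provides a ready-made necessary criterion for exactly this. Here the relevant torus is two-dimensional, namely the acting $\CC^*$ together with the cone scaling, and one first pins down the Reeb vector field $\xi_{\mathrm{Reeb}}$ by the Martelli--Sparks--Yau volume minimization, i.e.\ as the critical point of the normalized volume functional on the Reeb cone, a point determined algebraically by $\Delta$. With $\xi_{\mathrm{Reeb}}$ fixed, one exhibits a single equivariant special test configuration of the cone, again available from Proposition~\ref{prop:all-test-conf}, whose Donaldson--Futaki invariant, computed combinatorially from $\Delta$, the multiplicities and $\xi_{\mathrm{Reeb}}$, is nonpositive; this already obstructs $K$-polystability, and equivalently it means that the inequality of Theorem~\ref{thm:se-crit} fails for $X$, so no Sasaki-Einstein metric exists on $S_X$.

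The main obstacle is this last step: the Reeb field comes out as a root of a polynomial rather than a rational vector, so the Donaldson--Futaki invariant of the destabilizing configuration is an algebraic rather than rational number, and it must be shown rigorously to be nonpositive by an exact computation in the relevant number field rather than by a floating-point check. The Kähler-Ricci soliton side is comparatively benign: although $\xi_0$ is in general transcendental, existence and sign of the required zero follow from monotonicity of the defining function, and the remaining polystability check is a finite list of explicit inequalities. One concludes by noting that $X$ is one of the eleven families accounting for the gap between $\mu_{\scriptscriptstyle \mathrm{KRS}}(0)$ and $\mu_{\scriptscriptstyle \mathrm{SE}}(0)^{*}$ in Theorem~\ref{thm:main1}, so the example shows that the implication ``Kähler-Ricci soliton $\Rightarrow$ Sasaki-Einstein link'' conjectured in~\cite{MaNa} for smooth Fano varieties fails already for mildly singular log del Pezzo surfaces.
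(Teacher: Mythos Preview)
Your outline is correct and follows essentially the same route as the paper: identify $X$ with an explicit $X(A,P,\Sigma)$, apply Theorem~\ref{thm:krs-crit} to produce the K\"ahler--Ricci soliton, and then violate the necessary condition of Theorem~\ref{thm:se-crit} with a single special test configuration of the anticanonical cone. The paper carries this out concretely in the running Examples~\ref{example:lbsp1}--\ref{example:lbsp8}; in particular Examples~\ref{example:lbsp7} and~\ref{example:lbsp8} are precisely the two verifications you describe.

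The one genuine methodological difference worth flagging is in the rigorous numerics. You propose handling the Sasaki--Einstein side by an exact computation in the number field generated by the Reeb coordinate (the zero of a rational function, hence algebraic), and you treat the soliton side as ``benign'' via monotonicity. The paper instead uses certified interval arithmetic, following~\cite{CaSu}, for \emph{both} verifications: for the soliton it brackets $\xi$ in a tight interval and then bounds the test integrals $\int_{\mathcal{B}_\kappa} u_2 e^{\xi u_1}\,du$ away from zero; for the cone it bounds the critical $\xi_1^0$ and then the sign of the partial derivative of the volume. Your number-field approach would also work for the Sasaki--Einstein obstruction and is arguably cleaner there, but note that on the soliton side the defining equation for $\xi$ is genuinely transcendental, so an exact algebraic argument is not available and one really does need something like interval arithmetic to turn the finitely many inequalities into a proof; monotonicity alone gives existence and uniqueness of $\xi$ but not the positivity of the test integrals at that~$\xi$.
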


The surface from Theorem~\ref{thm:counterex} is 
a non-toric $\CC^*$-surface of Picard number two.
It serves also as a running example throughout the text,
illustrating the summary of the combinatorial
approach~\cites{HaHe,HaSu} to rational $\CC^*$-surfaces
presented in Section~\ref{sec:tvarlowcpl}
and the construction of toric degenerations provided in
Section~\ref{sec:constr-tordeg}.
The proof of Theorem~\ref{thm:counterex} is given by
Examples~\ref{example:lbsp7} and~\ref{example:lbsp8}.

\medskip

We would like to thank Chi Li for valuable hints
and Justus Springer for patiently helping us
with computations and data management.

\goodbreak

\tableofcontents


\section{Torus actions of complexity one}
\label{sec:tvarlowcpl}

The~\emph{complexity} of the action of
an algebraic torus on a variety is the minimum
of the codimensions of its orbits.
This section summarizes the necessary background
on torus actions of complexity one.
We assume the reader to be familiar with
the basics on \emph{toric varieties}, that means
the case of complexity zero;
we refer to~\cites{Dan,Ful,CoLiSc} for
background.
We take the field $\CC$ of complex numbers as
base field and just remark that everything works
as well over any algebraically closed field of
characteristic zero.

We will make frequently use of Cox's quotient
presentation of a toric variety. Let us
briefly recall this construction and thereby
fix the corresponding notation.
Recall from~\cite{ArDeHaLa} that the
\emph{Cox ring} of a normal variety $X$ with
finitely generated divisor class group $\Cl(X)$
is the $\Cl(X)$-graded $\CC$-algebra
$$
\mathcal{R}(X)
\ := \
\bigoplus_{\Cl(X)} H^0(X, \mathcal{O}_D).
$$

\begin{construction}
\label{constr:coxtoric}
Let $Z$ be the toric variety defined by 
a fan  $\Sigma$ in a lattice $N$ such that
the primitive generators $v_1,\ldots, v_r$ 
of the rays of $\Sigma$ span the 
rational vector space $N_\QQ = N \otimes_\ZZ \QQ$.
We have a linear map
$$
P \colon \ZZ^r \ \to \ N,
\qquad
e_i \ \mapsto \ v_i.
$$
In the case $N = \ZZ^n$, we also speak
of the \emph{generator matrix}
$P = [v_1, \ldots, v_r]$ of $\Sigma$.
The divisor class group and the Cox ring 
of $Z$ are given by
$$
\Cl(Z) = K := \ZZ^r / \im(P^*),
\qquad
\mathcal{R}(Z) = \CC[T_1,\ldots,T_r],
\qquad
\deg(T_i) = Q(e_i),
$$
where $P^*$ denotes the dual map of $P$ and
$Q \colon \ZZ^r \to K$ the projection.
Finally, we obtain a fan $\hat \Sigma$ in $\ZZ^r$
consisting of certain faces of the positive orthant,
namely 
$$ 
\hat \Sigma 
\ := \ 
\{\delta_0 \preceq \QQ_{\ge 0}^r; \ 
P(\delta_0) \subseteq \sigma \text{ for some } \sigma \in \Sigma\}.
$$
The toric variety $\hat Z$ associated with $\hat \Sigma$
is an open toric subset in $\bar Z := \CC^r$.
As $P$ is a map of the fans~$\hat \Sigma$ and 
$\Sigma$, it defines a toric morphism 
$p \colon \hat Z \to Z$, which turns out to be
a good quotient with respect to the action
of the quasitorus 
$$
H \ = \ \Spec \, \CC[K] \ = \ \ker(p) \ \subseteq \ \TT^r.
$$
\end{construction}

We turn to torus actions of complexity
one. Our approach is the one provided
by~\cites{HaHe,HaSu} for the case of
rational projective varieties~$X$;
see also~\cite[Sec.~3.4]{ArDeHaLa}.
In fact, we need the slightly more general
framework~\cite[Constr.~1.6, Type~2]{HaWr}
covering for instance the 
semiprojective~$X$ with only constant
invariant functions,
where \emph{semiprojective}
means that $X$ admits a projective
morphism onto an affine variety.
In a first step one explicitly builds up the
prospective Cox rings of the
varieties in question from a pair $(A,P)$ of
defining matrices.

\begin{construction}
\label{constr:RAP}
Fix $r \in \ZZ_{\ge 1}$, a sequence 
$n_0, \ldots, n_r \in \ZZ_{\ge 1}$, set 
$n := n_0 + \ldots + n_r$, and fix  
integers $m \in \ZZ_{\ge 0}$ and $0 < s < n+m-r$.
The input data are matrices 
$$
A 
 =  
[a_0, \ldots, a_r]
 \in  
\Mat(2,r+1;\CC),
\qquad
P
 = 
\left[ 
\begin{array}{cc}
L & 0 
\\
d & d'  
\end{array}
\right]
 \in  
\Mat(r+s,n+m; \ZZ),
$$
where $A$ has pairwise linearly independent 
columns and $P$ is built from an
$(s \times n)$-block $d$, an $(s \times m)$-block 
$d'$ and an $(r \times n)$-block $L$ 
of the form  
$$
L
\ = \ 
\left[
\begin{array}{cccc}
-l_0 & l_1 &   \ldots & 0 
\\
\vdots & \vdots   & \ddots & \vdots
\\
-l_0 & 0 &\ldots  & l_{r} 
\end{array}
\right],
\qquad
l_i \ = \ (l_{i1}, \ldots, l_{in_i}) \ \in \ \ZZ_{\ge 1}^{n_i}
$$
such that the columns $v_{ij}$, $v_k$ of 
$P$ are pairwise different primitive vectors 
generating $\QQ^{r+s}$ as a vector space. 
Consider the polynomial algebra 
$$
\CC[T_{ij},S_k]
\ := \ 
\CC[T_{ij},S_k; \; 0 \le i \le r, \, 1 \le j \le n_i, 1 \le k \le m]. 
$$
Denote by $\mathfrak{I}$ the set of 
all triples $I = (i_1,i_2,i_3)$ with 
$0 \le i_1 < i_2 < i_3 \le r$ 
and define for any $I \in \mathfrak{I}$ 
a trinomial 
$$
g_I
\ := \
g_{i_1,i_2,i_3}
\ := \
\det
\left[
\begin{array}{ccc}
T_{i_1}^{l_{i_1}} & T_{i_2}^{l_{i_2}} & T_{i_3}^{l_{i_3}}
\\
a_{i_1} & a_{i_2} & a_{i_3}
\end{array}
\right],
\qquad
T_i^{l_i} 
\  := \
T_{i1}^{l_{i1}} \cdots T_{in_i}^{l_{in_i}}.
$$
Consider the factor group 
$K := \ZZ^{n+m}/\rm{im}(P^*)$
and the projection $Q \colon \ZZ^{n+m} \to K$.
We define a $K$-grading on 
$\CC[T_{ij},S_k]$ by setting
$$ 
\deg(T_{ij}) 
\ := \ 
w_{ij}
\ := \ 
Q(e_{ij}),
\qquad
\deg(S_{k}) 
 \ := \
w_k
\ := \  
Q(e_{k}).
$$
Then the trinomials $g_I$ just introduced 
are $K$-homogeneous, all of the same degree.
In particular, we obtain a $K$-graded 
factor algebra  
$$
R(A,P)
\ := \
\CC[T_{ij},S_k] 
\ / \
\bangle{g_I; \ I \in \mathfrak{I}}.
$$
\end{construction}

\begin{remark}
The ring $R(A,P)$ provided by Construction~\ref{constr:RAP}
is a normal complete intersection ring and its ideal of 
relations is generated by the $r-1$ trinomials 
$$
g_\iota
\ := \ 
g_{0,1,\iota},
\qquad
\iota \ = \ 2, \ldots, r.
$$
\end{remark}

Let us illustrate by means of a concrete example.
The example will accompany us throughout the whole
text and is the log del Pezzo $\CC^*$-surface from
Theorem~\ref{thm:counterex}.

\begin{example}
\label{example:lbsp1}
Let $r=2$, $n_0=n_1=2$, $n_2=1$ and thus
$n=5$. Moreover, let $m=0$, $s=1$ and consider 
the matrices
$$
A
\ := \
\left[
\begin{array}{rrr}
1 & 0 & -1 
\\
0 & 1 & -1
\end{array}
\right],
\qquad\qquad
P
\ := \
\left[
\begin{array}{rrrrr}
-2 & -1 & 1 & 1 & 0
\\
-2 & -1 & 0 & 0 & 2
\\
3  & -1 & 0 & -1 & 1
\end{array}
\right].
$$
These data fit into the scheme of
Construction~\ref{constr:RAP} with 
$L$ consisting of the upper two rows
of $P$. We directly see
$$
R(A,P)
\ = \
\CC[T_{01},T_{02},T_{11},T_{12},T_{21}]
/
\bangle{T_{01}^2T_{02} + T_{11}T_{12} + T_{21}^2}.
$$
Moreover, we obtain $K = \ZZ^5 / \im(P^*) = \ZZ^2$
with the projection $\ZZ^5 \to \ZZ^2$ given by the
degree matrix
$$
Q
\ = \
[w_{01},w_{02},w_{11},w_{12},w_{21}]
\ = \ 
\left[
\begin{array}{rrrrr}
0 & 2 & 3 & -1 & 1
\\
1 & 2 & 1 &  3 & 2 
\end{array}
\right].
$$
\end{example}

In the second step, our varieties $X$ with torus
action of complexity one are constructed as quotients
of $\bar X = \Spec \, R(A,P)$ by the quasitorus 
$H = \Spec \, \CC[K]$. Each of these $X$ comes 
embedded into a toric variety.

\begin{construction}
\label{constr:RAPdown}
Situation as in Construction~\ref{constr:RAP}.
Consider the common zero set of the 
defining relations of $R(A,P)$:
$$ 
\bar{X} 
\ := \ 
V(g_I; \ I \in \mathfrak{I}) 
\ \subseteq \ 
\bar{Z} 
\ := \ 
\CC^{n+m}
$$
Let $\Sigma$ be any fan in the lattice $N = \ZZ^{r+s}$
having the columns of $P$ as the primitive 
generators of its rays.
Construction~\ref{constr:coxtoric} leads to 
a commutative diagram
$$ 
\xymatrix{
{\bar{X}}
\ar@{}[r]|\subseteq
\ar@{}[d]|{\rotatebox[origin=c]{90}{$\scriptstyle\subseteq$}}
&
{\bar{Z}}
\ar@{}[d]|{\rotatebox[origin=c]{90}{$\scriptstyle\subseteq$}}
\\
{\hat{X}}
\ar[r]
\ar[d]_{\quot H}^p
&
{\hat{Z}}
\ar[d]^{\quot H}_p
\\
X
\ar[r]
&
Z}
$$
with a normal closed subvariety
$X = X(A,P,\Sigma) := p(\hat X)$
of the toric variety $Z$ associated with~$\Sigma$.
Dimension, divisor class group
and Cox ring of $X$ are given~by 
$$ 
\dim(X) = s+1,
\qquad
\Cl(X) \ \cong \ K,
\qquad
\mathcal{R}(X) \ \cong \ R(A,P).
$$
The subtorus $T \subseteq \TT^{r+s}$ of the 
acting torus of $Z$ associated with the  
sublattice $\ZZ^{s} \subseteq \ZZ^{r+s}$
leaves $X$ invariant and the induced $T$-action
on $X$ is of complexity one.
\end{construction}

\begin{theorem}
See~\cites{HaSu,HaHe,HaWr}.
Every normal semiprojective rational variety with
a torus action of complexity one
having only constant invariant functions
is equivariantly isomorphic to some $X(A,P,\Sigma)$.
\end{theorem}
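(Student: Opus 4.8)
The plan is to reconstruct the combinatorial data $(A,P,\Sigma)$ directly from $X$ via its Cox ring and its maximal orbit quotient, following the line of~\cites{HaSu,HaHe,HaWr} and~\cite[Sec.~3.4]{ArDeHaLa}. First I would pass to the total coordinate space. A normal semiprojective variety carrying a torus action of complexity one has finitely generated divisor class group $\Cl(X)=K$ and, crucially, a \emph{finitely generated} Cox ring $\mathcal{R}(X)$; this finiteness is precisely where the low complexity is used. Hence $\bar X:=\Spec\,\mathcal{R}(X)$ is a normal affine variety and Construction~\ref{constr:coxtoric} applies: there is a good quotient $p\colon\hat X\to X$ for the quasitorus $H=\Spec\,\CC[K]$. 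The $T$-action on $X$ lifts to an action of a torus $\tilde T$ sitting in an extension $1\to H\to\tilde T\to T\to 1$, and the lifted $\tilde T$-action on $\bar X$ is again of complexity one. By Lüroth's theorem the invariant function field $\CC(X)^{T}$, of transcendence degree $\dim X-\dim T=1$, is rational, so the maximal orbit quotient of $X$ by $T$ is a rational curve; together with semiprojectivity and the hypothesis $\mathcal{O}(X)^{T}=\CC$, this curve is the complete one, namely $\PP_1$. Fix the quotient map $\bar X\dashrightarrow\PP_1$.

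Next I would read off the generators of $\mathcal{R}(X)$. The Cox ring is factorially $K$-graded, so its $\tilde T$-homogeneous prime elements correspond to $\tilde T$-invariant prime divisors on $\bar X$, and up to the $\tilde T$-action these split into two groups. Over the finitely many special points $x_0,\dots,x_r\in\PP_1$ (those with non-generic fibre) lie the ``vertical'' prime divisors: over $x_i$ there are $n_i$ of them, with well-defined multiplicities $l_{ij}=(l_{i1},\dots,l_{in_i})$ in the pulled-back fibre, giving homogeneous prime generators $T_{ij}$. The finitely many ``horizontal'' prime divisors mapping dominantly to $\PP_1$ give generators $S_k$, $k=1,\dots,m$; semiprojectivity ensures that the $T_{ij}$ and $S_k$ already generate $\mathcal{R}(X)$. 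The $l_{ij}$ assemble into the block $L$; choosing a good quotient presentation $\hat X\to X$ that embeds into a toric $\hat Z\to Z$ fixes a fan $\Sigma$ in $N=\ZZ^{r+s}$ whose ray generators are the images in the divisor lattice of the $T_{ij}$ and $S_k$, and these images supply the blocks $d$ and $d'$; this is the matrix $P$. The grading is then the one induced by $P^{*}$, so $\Cl(X)\cong K=\ZZ^{n+m}/\mathrm{im}(P^{*})$ with $\deg(T_{ij})=Q(e_{ij})$ and $\deg(S_k)=Q(e_k)$, as in Construction~\ref{constr:RAP}.

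For the relations I would exploit that all products $T_i^{l_i}=T_{i1}^{l_{i1}}\cdots T_{in_i}^{l_{in_i}}$ carry the same $K$-degree, namely the class $\mu$ of a fibre of the quotient, and that modulo units the $\mu$-homogeneous part of $\mathcal{R}(X)$ is governed by $H^{0}(\PP_1,\mathcal{O}_{\PP_1}(1))\cong\CC^{2}$. Since any three vectors in a two-dimensional space are linearly dependent, and the dependence is controlled by the positions of $x_0,\dots,x_r$ on $\PP_1$, encoded by columns $a_i\in\CC^{2}$ that are pairwise linearly independent (the $x_i$ being distinct), one obtains for every triple $I=(i_1,i_2,i_3)$ the trinomial relation $g_I=0$. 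A dimension count then finishes: $\mathcal{R}(X)$ has dimension $s+1$, the polynomial ring $\CC[T_{ij},S_k]$ has $n+m$ variables, and the $g_\iota=g_{0,1,\iota}$ for $\iota=2,\dots,r$ form a regular sequence, so the $g_I$ generate the entire ideal of relations and $\mathcal{R}(X)\cong R(A,P)$ as $K$-graded algebras. Feeding $(A,P,\Sigma)$ into Construction~\ref{constr:RAPdown} reproduces $X$ as the subvariety $X(A,P,\Sigma)\subseteq Z$, and the subtorus $T\subseteq\TT^{r+s}$ attached to $\ZZ^{s}\subseteq\ZZ^{r+s}$ is by construction the lift of the acting torus, yielding an equivariant isomorphism.

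The main obstacle is the identification $\mathcal{R}(X)\cong R(A,P)$: one must show that no homogeneous prime generators beyond the vertical $T_{ij}$ and horizontal $S_k$ are needed, and that their only relations are the trinomials $g_I$. This is the structural core and rests on the theory of factorially graded affine algebras with a complexity-one torus action; here semiprojectivity together with $\mathcal{O}(X)^{T}=\CC$ is exactly what rules out extra generators from a non-complete or higher-genus quotient base or from additional $\mu$-homogeneous sections, and the complete-intersection dimension count is what forces the ideal to be generated in the stated way. Once this is established, lifting the torus, choosing a $\Sigma$-compatible quotient presentation, and matching the gradings are all routine.
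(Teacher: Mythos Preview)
The paper does not give its own proof of this theorem: it merely states the result with the citations~\cites{HaSu,HaHe,HaWr}. Your sketch is a faithful outline of the argument developed in those references (and summarized in~\cite[Sec.~3.4]{ArDeHaLa}), so there is nothing to compare against here beyond noting that you have correctly identified the strategy: realize the Cox ring as a factorially $K$-graded complete intersection of trinomials via the structure theory for complexity-one actions, then recover $(A,P,\Sigma)$ from the grading and a choice of ambient fan.

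One small slip worth correcting: in your dimension count you write ``$\mathcal{R}(X)$ has dimension $s+1$'', but $s+1=\dim X$, not $\dim\bar X=\dim\Spec\mathcal{R}(X)$. The correct count is $\dim\bar X=\dim X+\dim H=(s+1)+(n+m-r-s)=n+m-r+1$, which indeed matches $(n+m)-(r-1)$ and confirms that the $r-1$ trinomials $g_2,\ldots,g_r$ cut out a complete intersection of the right dimension. Also, the claim ``semiprojectivity ensures that the $T_{ij}$ and $S_k$ already generate $\mathcal{R}(X)$'' is not quite the right attribution: the generation statement is the structural theorem on factorially graded affine algebras with a complexity-one torus action from~\cite{HaHe}; semiprojectivity and $\mathcal{O}(X)^{T}=\CC$ enter earlier, to force the rational quotient curve to be $\PP_1$ rather than $\mathbb{A}^1$ or a higher-genus curve.
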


\begin{example}
\label{example:lbsp2}
In the setting of Example~\ref{example:lbsp1},
we have $\bar X = V(T_{01}^2T_{02} + T_{11}T_{12} + T_{21}^2)$,
embedded into $\bar Z = \CC^5$.
Take $\Sigma$ to be the fan in $\ZZ^5$
having
$$
\cone(v_{01},v_{11},v_{21}), \quad
\cone(v_{02},v_{12},v_{21}), \quad
\cone(v_{01},v_{02}), \quad
\cone(v_{11},v_{12})
$$
as its maximal cones. Then $\hat Z$ is the open
subset of $\bar Z$ obtained by removing the
zero sets of the monomials
$$
T_{02}T_{12}, \qquad
T_{01}T_{11}, \qquad
T_{11}T_{12}T_{21}, \qquad
T_{01}T_{02}T_{12}.
$$
The quotient map $p \colon \hat Z \to Z$ by the
action of $H = \ker(p)$ is given
on the respective acting tori $\TT^5 \subseteq \hat Z$
and $\TT^3 \subseteq Z$ as
$$
p \colon \TT^5 \ \to \ \TT^3,
\qquad
(t_{01}, \, t_{02},  \, t_{11},  \, t_{12},  \, t_{21})
\ \mapsto \
\left(
\frac{t_{11}t_{12}}{t_{01}^2t_{02}}, \
\frac{t_{21}^2}{t_{01}^2t_{02}}, \
\frac{t_{01}^3t_{21}}{t_{02}t_{12}}
\right).
$$
Thus, for $X = p(\hat X)$, we obtain
$X \cap \TT^3 = V(1+t_1+t_2)$.
In particular, we see that~$X$ is invariant
under the subtorus
$\{(1,1,t); \ t \in \CC^*\}$ of $\TT^3$.
Moreover, we have
$$ 
\dim(X) = s+1 = 2,
\qquad
\Cl(X) \ \cong \ K \ = \ \ZZ^2,
\qquad
\mathcal{R}(X) \ \cong \ R(A,P).
$$
\end{example}


\begin{remark}
Construction~\ref{constr:RAPdown}
provides us with \emph{Cox coordinates}.
Each $z \in Z$ can be written as
$z = p(\hat z)$, where $\hat z \in \hat Z \subseteq \CC^{n+m}$ 
has closed $H$-orbit in $\hat Z$ and this 
presentation is unique up to multiplication 
by elements of $H$.
This allows us in particular to put hands
on the points of $X = X(A,P,\Sigma) \subseteq Z$.
\end{remark}

\begin{remark}
We say that the matrix $P$ from Construction~\ref{constr:RAP}
is \emph{irredundant} if we have $l_{i1}n_i \ge 2$ for
$i = 0, \ldots, r$.
In Construction~\ref{constr:RAPdown}, we may assume
without loss of generality that $P$ is irredundant.
An $X(A,P,\Sigma)$ with irredundant $P$ is a toric
variety if and only if $r = 1$ holds.
\end{remark}

\begin{remark}
Consider $X = X(A,P,\Sigma)$ and the embedding
$X \subseteq Z$ as provided by 
Construction~\ref{constr:RAPdown}.
The toric prime divisors
$D_{ij}^Z, D_k^Z \subseteq Z$
defined by the columns of $P$
restrict
to $\TT^s$-invariant prime divisors
$D_{ij}^X, D_k^X \subseteq X$
such that
$$
[D_{ij}^X]
\ = \
[D_{ij}^Z],
\qquad\qquad
[D_{ij}^X]
\ = \
[D_{ij}^Z]
$$
holds for the respective classes in $\Cl(X) = K = \Cl(Z)$.
Moreover, the projection $\ZZ^{r+s} \to \ZZ^r$ defines
the rational quotients 
$\pi_Z \colon Z \dasharrow \PP_r$
for the action of $\TT^s$.
Restricting yields the rational quotient 
$\pi_X \colon X \dasharrow \PP_1$
and a commutative diagram
$$ 
\xymatrix{
  X
\ar@{}[r]|\subseteq
\ar@{-->}[d]_{\pi_X}
&
Z
\ar@{-->}[d]^{\pi_Z}
\\
{\PP_1}
\ar@{}[r]|\subseteq
&
{\PP_r}
}
$$
where $\pi_Z$ is defined on the union $Z' \subseteq Z$
of the toric orbits of codimension at most one and
$\pi_X$ maps $X' = X \cap Z'$ onto $\PP_1$.
Moreover, $\PP_1$ intersects the
$i$-th coordinate hyperplane of $\PP_r$
in the point
$\alpha_i = [a_{i1},a_{i2}] \in \PP_1$ given by the
$i$-th column of $A$ and
$$
\overline{\pi_X^{-1}(\alpha_i)}
\ = \ 
D_{i0}^X \cup \ldots \cup D_{in_i}^X \subseteq \ X
$$
holds for $i = 0, \ldots, r$. If $P$ is irredundant,
then the $\pi_X^{-1}(\alpha_i)$ are precisely the
critical fibres of $\pi_X$.
If the situation is clear, then we will simply
write $D_{ij}$ and $D_k$ for the divisors on
$X$ and as well on $Z$.
\end{remark}

\begin{remark}
\label{rem:families}
Let $X = X(A,P,\Sigma)$ with $P$ irredundant.
Varying $A$ yields different constellations
of the critical values of
$\pi_X \colon X \dasharrow \PP_1$, which is the
reason for moduli.
More precisely, we can always assume
$$
A
\ = \
\left[
\begin{array}{cccccc}
1 & 0 & -\lambda_1 & -\lambda_2 & \cdots & -\lambda_r
\\
0 & 1 & -1 & -1 & \cdots & -1
\end{array}
\right],
\qquad
g_\iota \ = \ \lambda_\iota T_0^{l_0} + T_1^{l_1} + T_\iota^{l_\iota}
$$
with $\lambda_1 = 1$, all $\lambda_\iota$ non-zero and
pairwise distinct.
As a consequence, we see that for fixed $P$ and $\Sigma$,
the $X(A,P,\Sigma)$ come in an $(r-2)$-dimensional family,
parametrized by the $\lambda_i$
(up to possible renumberings).
\end{remark}

\begin{remark}
Let $X = X(A,P,\Sigma)$ and $X \subseteq Z$ 
be as in Construction~\ref{constr:RAPdown}.
Then, in $K_\QQ = \Cl_\QQ(X)$, the cones of
effective and movable divisor classes are
given as
$$
\Eff(X) \ = \ \Eff(X) \ = \ Q(\gamma),
\qquad\qquad
\Mov(X) \ = \ \Mov(Z) \ = \
\bigcap_{\genfrac{}{}{0pt}{1}{\gamma_0 \preccurlyeq \gamma}{\scriptstyle\rm facet}} Q(\gamma_0),
$$
where $\gamma = \cone(e_{ij},e_k) \subseteq \QQ^{n+m}$
denotes the positive orthant.
Moreover, the cones of semiample divisor classes
of $X$ and $Z$ are related to each other as
follows:
$$
\SAmple(Z)
\ = \
\bigcap_{\sigma \in \Sigma} \sigma^*
\ \subseteq \
\bigcap_{\sigma \in \Sigma_X} \sigma^*
\ = \
\SAmple(X),
$$
where $\sigma^* = \cone(w_{ij},w_k; \ v_{ij},v_k \not\in \sigma) \subseteq K_\QQ$
is the complementary cone and
$\Sigma_X$ consists of all $\sigma \in \Sigma$
such that the corresponding toric orbit meets $X \subseteq Z$.
\end{remark}

\begin{remark}
\label{rem:projemb}
Let $X = X(A,P,\Sigma) \subseteq Z$ 
be as in Construction~\ref{constr:RAPdown}.
The cone of ample divisor classes is
the relative interior of the cone of semiample
divisor classes:
$$
\Ample(X) \ = \ \SAmple(X)^\circ.
$$
Any $w \in \Ample(X)$ gives rise to an equivariant
open embedding $X \subseteq X_w$, where
$X_w = X(A,P,\Sigma_w) \subseteq Z_w$ and
the fan $\Sigma_w$ in $\ZZ^{r+s}$ is given by
$$
\Sigma_w
\ := \
\{P(\gamma_0); \ \gamma_0 \preccurlyeq \gamma, \ w \in Q(\gamma_0)^\circ\}.
$$
We have $X = X_w$ if and only if $X \subseteq Z_w$
meets the same toric orbits as $X_w \subseteq Z_w$.
If so, then $X$ is semiprojective and it is
projective if in addition $\QQ^{r+s} = \cone(v_{ij},v_k)$.
\end{remark}

\begin{example}
\label{example:lbsp3}
We continue Examples~\ref{example:lbsp1}
and~\ref{example:lbsp2}.
In $\Cl_\QQ(X)=K_\QQ=\QQ^2$, the
columns of the degree matrix
$Q = [w_{01},w_{02},w_{11},w_{12},w_{21}]$ are
located as follows.

\begin{center}
\begin{tikzpicture}[scale=0.6]
\sffamily
\coordinate(oo) at (0,0);
\coordinate(w01) at (0,1);
\coordinate(w02) at (2,2);
\coordinate(w11) at (3,1);
\coordinate(w12) at (-1,3);
\coordinate(w21) at (1,2);
\coordinate(c01) at (0,4);
\coordinate(c02) at (4,4);
\coordinate(c11) at (3.6,1.2);
\coordinate(c12) at (-1.2,3.6);
\coordinate(c21) at (2.3,4.6);
\path[fill, color=gray!20] (oo) -- (c02) -- (c21) -- (c01) -- (oo);
%
\path[fill, color=black] (w01) circle (1mm) 
node[below left=.5mm]{$\scriptscriptstyle w_{01}$};
\path[fill, color=black] (w02) circle (1mm) 
node[below right=-1mm]{$\scriptscriptstyle w_{02}$};
\path[fill, color=black] (w11) circle (1mm)
node[below]{$\scriptscriptstyle w_{11}$};
\path[fill, color=black] (w12) circle (1mm)
node[left]{$\scriptscriptstyle w_{12}$};
\path[fill, color=black] (w21) circle (1mm)
node[above left=-1mm]{$\scriptscriptstyle w_{21}$};
\draw[thick,color=black] (oo) to (c01);
\draw[thick,color=black] (oo) to (c02);
\draw[thick,color=black] (oo) to (c11);
\draw[thick,color=black] (oo) to (c12);
\draw[thick,color=black] (oo) to (c21);
\draw[color=black] (-4,0) to (4,0);
\draw[color=black] (0,-1) to (0,3);
\end{tikzpicture}
\end{center}

\noindent
The shadowed part indicates $\Mov(X)$, which
equals $\SAmple(X)$ as we are in the surface
case.
Remark~\ref{rem:projemb} yields three distinct
equivariant closed embeddings
$$
X \ \subseteq \ Z_{w_{02} + w_{21}},
\qquad
X \ \subseteq \ Z_{w_{21}},
\qquad
X \ \subseteq \ Z_{w_{21} + w_{01}},
$$
showing in particular that $X$ is projective.
We take a closer look at $w_{21} = [D_{21}^X]$
and the associated graded ring
$$
R(w_{21})
\ := \ 
\bigoplus_{k \geq 0} H^0(X,\mathcal{O}(kD_{21}^X))
\ = \
\bigoplus_{k \ge 0} R(A,P)_{kw_{21}}
\ \subseteq \
R(A,P).
$$
Being a Veronese subalgebra of $R(A,P)$, we
can compute for $R(w_{21})$ an explicit presentation
in terms of homogeneous generators and relations:
$$
R(w_{21})
\cong
\CC[T_1, \ldots, T_5]
/
\bangle{ T_4^2-T_2T_5, \, T_1^2T_2^2T_4 + T_2^3T_4 + T_3T_5, \, T_1^2T_2^3 + T_2^4  + T_3T_4 },
$$
where $\CC[T_1, \ldots, T_5]$ is graded by
assigning to $T_1,T_2,T_3,T_4,T_5$ the degrees
$1,2,3,5,8$.
We conclude that $X$ is the weighted
projective surface from Theorem~\ref{thm:counterex}.
\end{example}

\begin{remark}
\label{rem:antican}
Let $X = X(A,P,\Sigma)$ and $X \subseteq Z$ 
be as in Construction~\ref{constr:RAPdown}.
Then the anticanonical class of $X$ in
$\Cl(X) = K$ is given by 
$$
[-\mathcal{K}_X]
\ = \
 (1-r)\mu + \sum w_{ij} + \sum w_k,
\quad
\mu \ = \
\sum_{j=1}^{n_0} l_{0j} w_{0j}
\ = \
\ldots
\ = \
\sum_{j=1}^{n_r} l_{rj} w_{rj}.
$$
This allows us in particular to decide if $X$ is a Fano variety,
that means if  $[-\mathcal{K}_X]$ lies
in the ample cone
$\Ample(X) = \SAmple(X)^\circ$.
\end{remark}

\begin{example}
\label{example:lbsp4}
The $X = X(A,P,\Sigma)$ provided by Examples~\ref{example:lbsp1}
and~\ref{example:lbsp2} is a del Pezzo surface, as we have
$$
[-\mathcal{K}_X]
\ = \
w_{01} + w_{02} + w_{21}
\ = \
(5,3)
\ \in \
\Ample(X)
\ = \
\cone(w_{02},w_{01})^\circ.
$$
\end{example}

Given a projective variety with a torus action
and an ample divisor,
we can lift the action via linearization to
the corresponding affine cone and, together
with the fiberwise $\CC^*$-action, this gives
a torus action of the same complexity.
For complexity at most one, such equivariant
cone constructions are obtained via combinatorial
data as follows; note that for $r=1$, the
statement settles the toric case.

\goodbreak

\begin{proposition}
\label{prop:affinecone}
Let $X \subseteq Z$ arise from $(A,P,\Sigma)$ with
$\Sigma$ polytopal and consider ample divisors
$D^Z$ on $Z$ and $D^X$ on $X$ given by
$$
D^Z
\ = \
\sum_{i,j} \alpha_{ij} D^Z_{ij} + \sum_{k} \alpha_k D^Z_k,
\qquad
D^X
\ = \
\sum_{i,j} \alpha_{ij} D^X_{ij} + \sum_{k} \alpha_k D^X_k.
$$
Then the associated affine cones~$X' \subseteq Z'$
over $X \subseteq Z$
arise via Construction~\ref{constr:RAPdown}
from $(A,P',\Sigma')$ with $s' := s+1$ and 
the $(r+s')$ by $(n+m)$ stack matrix
$$
P'
\ := \
\left[
\begin{array}{c}
P
\\
\alpha
\end{array}
\right],
\qquad
\alpha
\ = \
(\alpha_{ij},\alpha_k)
\ \in \ \ZZ^{n+m}
$$
and $\Sigma'$ is the fan of faces of the cone
$\sigma' \subseteq \QQ^{r+s'}$
generated by the columns of $P'$.
With the projection
$F \colon \ZZ^{r+s'} \to \ZZ^{r+s}$
onto the first $r+s$ coordinates
we have 
\[
\Sigma
\ = \   
\{F(\sigma_0'); \ \sigma_0' \prec \sigma'\}
\]
and the cone projection
$Z' \setminus \{z'\} \to Z$
restricts to the cone projection
$X' \setminus \{x'\} \to X$,
where $x'=z'$ is the common apex
of the affine cones $X' \subseteq Z'$.
\end{proposition}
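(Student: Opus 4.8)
For brevity, let $\rho$ range over the $n+m$ columns of $P$; write $v_\rho$ for the $\rho$-th column, $\alpha_\rho$ for the $\rho$-th entry of $\alpha$, and $D^Z_\rho\subseteq Z$, $D^X_\rho\subseteq X$ for the associated toric prime divisors, so that $D^Z=\sum_\rho\alpha_\rho D^Z_\rho$ and $D^X=\sum_\rho\alpha_\rho D^X_\rho$. Let $N=\ZZ^{r+s}$ be the lattice of $Z$ and $M$ its dual. The plan is to route everything through the Cox quotient presentations of Constructions~\ref{constr:coxtoric} and~\ref{constr:RAPdown}, exploiting that passing to the affine cone leaves the total coordinate space unchanged and only coarsens the grading: the trinomials $g_I$ depend solely on $A$ and the $L$-block of $P$, which are common to $P$ and $P'$, so $\bar X=V(g_I;\ I\in\mathfrak{I})\subseteq\CC^{n+m}$ is the same affine variety for the data $(A,P)$ and $(A,P')$, and only the grading group $K'=\ZZ^{n+m}/\im({P'}^*)$, the quasitorus $H'=\Spec\CC[K']$ and the fan change. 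A first step is to check that $(A,P',\Sigma')$ is admissible input for Construction~\ref{constr:RAPdown}: the matrix $P'$ has the prescribed block shape (the $L$-block is unchanged, while $d$ and $d'$ each acquire the row $\alpha$); its columns $(v_\rho,\alpha_\rho)$ are pairwise distinct primitive vectors, both properties passing from the columns of $P$ through the coordinate projection $F$; and they span $\QQ^{r+s'}$ and are, without redundancy, the primitive ray generators of $\sigma'$. For these last points one invokes that $\Sigma$ is polytopal, say the normal fan of a lattice polytope $\Delta\subseteq M_\QQ$ belonging to the ample divisor $D^Z$, so that $\Delta$ is full-dimensional and its facets $F_\rho=\{m\in\Delta;\ \langle m,v_\rho\rangle=-\alpha_\rho\}$ correspond bijectively to the rays of $\Sigma$; then the facets of the cone over $\Delta$, and hence the rays of its dual $\sigma'$, are in bijection with the $F_\rho$, the $\rho$-th one being spanned by $(v_\rho,\alpha_\rho)$.

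We settle the toric claim next. The affine cone over $Z$ with respect to $D^Z$ has coordinate ring $\bigoplus_{k\ge0}H^0(Z,kD^Z)$, and through $H^0(Z,kD^Z)=\bigoplus_{m\in k\Delta\cap M}\CC\chi^m$ this ring is the semigroup algebra of $\sigma'^\vee\cap(M\times\ZZ)$, where $\sigma'\subseteq N\times\ZZ=\ZZ^{r+s'}$ is the dual of the cone over $\Delta$, equivalently the cone generated by the columns $(v_\rho,\alpha_\rho)$ of $P'$; here one verifies that $(0,1)$ already lies in this cone, so that no auxiliary generator is required, using ampleness of $D^Z$: picking an interior point $m_0$ of $\Delta$ and a relation $\sum_\rho c_\rho v_\rho=0$ with all $c_\rho>0$ (which exists since the rays of a complete fan positively span $N_\QQ$) yields $\sum_\rho c_\rho\alpha_\rho=\sum_\rho c_\rho(\langle m_0,v_\rho\rangle+\alpha_\rho)>0$. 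Thus $Z'=Z_{\Sigma'}$ is the affine toric variety of $\sigma'$, and the cone projection $Z'\setminus\{z'\}\to Z$ is the quotient by the cone-scaling $\CC^*$, which is the subtorus of $T'$ singled out by the last coordinate of $\ZZ^{r+s'}$. The identity $\Sigma=\{F(\sigma_0');\ \sigma_0'\prec\sigma'\}$ (proper faces) is the usual dictionary between the normal fan $\Sigma$ of $\Delta$, the faces of $\Delta$ and the faces of the cone over $\Delta$: a proper face of $\sigma'$ has the form $\sigma'\cap(1,m_0)^\perp$ with $m_0\in\Delta$, it is spanned by those $(v_\rho,\alpha_\rho)$ with $m_0\in F_\rho$, and $F$ carries it to $\cone(v_\rho;\ m_0\in F_\rho)$, the normal cone of the smallest face of $\Delta$ containing $m_0$, which is a cone of $\Sigma$ because $m_0$ lies in the relative interior of that face; conversely every cone of $\Sigma$ arises by choosing $m_0$ in the relative interior of the matching face of $\Delta$.

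It remains to descend to $X$. Because $\Sigma'$ consists of all faces of $\sigma'$, the open subset $\hat Z'$ of Construction~\ref{constr:coxtoric} equals all of $\CC^{n+m}$, hence $\hat X'=\bar X$ and $X(A,P',\Sigma')=\bar X\git H'$ sits as a closed subvariety of $Z'=\CC^{n+m}\git H'$. Its coordinate ring is the invariant ring $\mathcal{R}(X)^{H'}=\bigoplus_{w\in\ker(K\to K')}\mathcal{R}(X)_w$, and $\ker(K\to K')$ is the cyclic subgroup generated by $\bar\alpha:=Q(\alpha)=[D^X]$; ampleness of $D^X$ on the projective $X$ makes $\bar\alpha$ non-torsion, so this kernel is free of rank one, and also forces $\mathcal{R}(X)_{k\bar\alpha}=H^0(X,kD^X)=0$ for $k<0$. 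Therefore $\CC[X(A,P',\Sigma')]=\bigoplus_{k\ge0}H^0(X,kD^X)$, the coordinate ring of the affine cone over $X$ with respect to $D^X$, and this matches the $\ZZ_{\ge0}$-gradings since the grading by the last coordinate of $\ZZ^{r+s'}$ assigns to $(m,k)\in\sigma'^\vee$ exactly the value $k$. The very same computation identifies $\CC[Z_{\Sigma'}]$ with $\bigoplus_{k\ge0}H^0(Z,kD^Z)$, compatibly with the inclusion $\bar X\subseteq\CC^{n+m}$, so that $X'\subseteq Z'$ is precisely the pair of affine cones; the apices agree, $x'=z'$ being the point cut out by the irrelevant ideal, and the cone projection of $Z'$ restricts to that of $X'$, while the relation $\Sigma=\{F(\sigma_0')\}$ from the second paragraph records the compatibility of the two fans. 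I expect the main obstacle to be the toric bookkeeping of that second paragraph --- realizing $\sigma'$ as the cone on the columns of $P'$ without a spurious generator and proving the face correspondence $\Sigma=\{F(\sigma_0')\}$ --- whereas the Cox-ring reductions at the two ends are essentially formal once the ampleness inputs, non-torsionness of $\bar\alpha$ and vanishing of sections in negative degree, are in hand.
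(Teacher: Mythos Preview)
Your proposal is correct and follows the same core route as the paper: both identify $K'=\ZZ^{n+m}/\im({P'}^*)$ with $K/\ZZ w_D$ and then read off $\Gamma(X',\mathcal{O})$ as the Veronese subring $\bigoplus_{k\ge 0} R(A,P)_{kw_D}\cong\bigoplus_{k\ge 0} H^0(X,\mathcal{O}(kD^X))$. The paper's proof is three lines and stops there, leaving the purely toric assertions (that $(A,P',\Sigma')$ is admissible, that the columns of $P'$ are exactly the primitive ray generators of~$\sigma'$, and the fan identity $\Sigma=\{F(\sigma_0');\ \sigma_0'\prec\sigma'\}$) implicit; you supply these via the polytope $\Delta$ of $D^Z$ and the facet/ray dictionary, including the nice observation that $(0,1)\in\cone((v_\rho,\alpha_\rho))$ so no auxiliary generator appears. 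So the approaches agree, yours is simply more explicit about the bookkeeping the paper suppresses.
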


\begin{proof}
Clearly, $(A,P',\Sigma')$ are defining data in the sense of
Construction~\ref{constr:RAPdown}.
Moreover, by the definition of the stack matrix $P'$, we have
\[
K'
\ = \
\ZZ^{r+s'}/\im((P')^*)
\ = \
(\ZZ^{r+s}/\im(P^*)) / \ZZ w_D
\ = \
K / \ZZ w_D,
\]
where we write $w_D \in K$ for the common class of
$D^Z$ and $D^X$ in $\Cl(Z) = \Cl(X)$.
This allows us to identify the affine cone $X'$
via its algebra of functions:
\[
\Gamma(X',\mathcal{O})
\ \cong \
R(A,P')_0
\ = \
\bigoplus_{n \in \ZZ} R(A,P)_{nw_D}
\ \cong \
\bigoplus_{n \in \ZZ_{\ge 0}} \Gamma(X,\mathcal{O}(nD)).
\]
\end{proof}


\section{A normality lemma}

Given an affine variety $X = X(A,P)$, we consider
the acting torus $\TT^{r+s} = \TT^r \times \TT^s$
of its ambient toric variety $Z$ and the
subtorus $\TT = \TT^1 \times \TT^s$,
where $\TT^1 \subseteq \TT^r$ is the one-dimensional
diagonal and $\TT^s$ the acting torus of $X$.
We figure out implications of normality of the closure
of $\TT$ in $Z$ on the defining matrix $P$.
The result will be used later for characterizing
the normality of toric degenerations.

\begin{proposition}
\label{lem:loch}
Consider the affine toric variety $Z$ arising from the
lattice cone~$\sigma$ in~$\ZZ^{r+s}$ generated by
primitive vectors of the form
\[
v_i := l_ie_i + d_{i1} e_{r+1} + \ldots + d_{is} e_{r+s},
\quad
l_1 \ge \ldots \ge l_r \ge 1,
\quad
0 \le d_{ij} < l_i.
\]
and $\varphi \colon \TT^{1+s} \to \TT^{r+s}$,
$(t_1,t_2, \ldots, t_{1+s}) \mapsto (t_1, \ldots, t_1,t_2,\ldots,t_{1+s})$.
Assume that the closure of the image
$X := \overline{\varphi(\TT^{1+s})} \subseteq Z$ is normal.
Then $l_2 = \ldots = l_r = 1$.
\end{proposition}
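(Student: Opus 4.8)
The statement is really a statement about saturation of the semigroup $\sigma^\vee \cap M$ restricted to the image of the dual map $\varphi^*$. Write $N = \ZZ^{r+s}$, $M = N^*$, and let $\sigma \subseteq N_\QQ$ be the cone generated by $v_1, \dots, v_r$ (these span an $r$-dimensional face together with the coordinate directions $e_{r+1}, \dots, e_{r+s}$; more precisely $\sigma$ is the cone on the $v_i$, but since the last $s$ coordinates $d_{ij}$ only shift, the relevant combinatorics happen in the lattice $\ZZ^r$ spanned by the first $r$ coordinates after projecting). The torus $X = \overline{\varphi(\TT^{1+s})}$ is the affine toric variety whose character lattice is $\varphi^*(M) = L \subseteq \ZZ^{1+s}$ and whose cone of functions is the image $\varphi^*(\sigma^\vee \cap M)$; normality of $X$ is equivalent to this semigroup being saturated in $L$, i.e.\ equal to $C \cap L$ where $C = (\varphi^{-1}(\sigma))^\vee$ is the dual of the preimage cone. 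So the first step is: \emph{translate normality of $X$ into the statement that $\varphi^*(\sigma^\vee \cap M) = \varphi^*(M) \cap \cone(\varphi^*(\sigma^\vee))$.}

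\textbf{Main reduction.} Because $\varphi$ sends $(t_1, t_2, \dots)$ to $(t_1, \dots, t_1, t_2, \dots)$, the dual $\varphi^* \colon \ZZ^{r+s} \to \ZZ^{1+s}$ sends $(u_1, \dots, u_r, u_{r+1}, \dots, u_{r+s})$ to $(u_1 + \dots + u_r, u_{r+1}, \dots, u_{r+s})$. The cone $\sigma^\vee$ consists of all $u$ with $\langle u, v_i\rangle = l_i u_i + \sum_j d_{ij} u_{r+j} \ge 0$ for $i = 1, \dots, r$. The idea is to exhibit, assuming some $l_i \ge 2$ with $i \ge 2$ (wlog $i = r$, since $l_r \ge 2$ forces all $l_i \ge 2$ by the ordering $l_1 \ge \dots \ge l_r$), a lattice point $w \in L$ that lies in the cone $\cone(\varphi^*(\sigma^\vee))$ but is not in the image $\varphi^*(\sigma^\vee \cap M)$, contradicting saturation. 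The natural candidate comes from the $r$-th inequality: look for $u \in \QQ^{r+s}$ with $\langle u, v_r \rangle$ close to $0$ but positive — a ``fractional'' point forced by $l_r \ge 2$ — whose image under $\varphi^*$ is integral while no integral preimage exists. Concretely, one takes $u$ supported in the $e_r$-direction: $u = (0, \dots, 0, u_r, 0, \dots, 0)$ with $u_r = 1/l_r$ makes $\langle u, v_r\rangle = 1 \ge 0$ but $\langle u, v_i \rangle = 0$ for $i < r$; its image $\varphi^*(u) = (1/l_r, 0, \dots, 0)$ is not a lattice point, so this exact vector does not work directly, and the real work is to combine it with other generators of $\sigma^\vee$ to reach an actual lattice point of $L$ that still has no integral preimage in $\sigma^\vee$.

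\textbf{Where the obstacle is.} The crux — and the step I expect to be the hardest — is producing the witness lattice point cleanly: one must show that the monoid $\varphi^*(\sigma^\vee \cap M)$ genuinely misses a point of its saturation, and the obstruction should be ``local'' near the ray $\RR_{\ge 0} v_r$ (or the facet of $\sigma^\vee$ cut out by $v_r$), exploiting that $\gcd$-type conditions with $l_r \ge 2$ force a gap. I would argue by contradiction: suppose $l_r \ge 2$; since $\varphi$ collapses the coordinates $e_1, \dots, e_r$ to their sum, the character $f = e_r^* - e_{r-1}^* \in M$ (difference of two of the collapsed coordinates, which maps to $0$ under $\varphi^*$ when $r \ge 2$) together with a suitable positive combination gives an element $m$ of $\sigma^\vee$ whose $\varphi^*$-image is a primitive lattice vector $w \in L$; then one checks that any preimage of $w$ in $M$ differs from $m$ by $\ker(\varphi^*) = \mathrm{span}(e_i^* - e_j^*)$, and by the divisibility built into the $v_i$ (the $l_i$ and the $d_{ij}$ with $0 \le d_{ij} < l_i$) no such translate lies in $\sigma^\vee$ — this is where $l_r \ge 2$ is used essentially, whereas $l_r = 1$ would allow the translate. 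Cataloguing the finitely many facet inequalities and checking the translate fails each is the routine-but-fiddly part; the conceptual content is that collapsing $\ge 2$ of the coordinates $e_i$ onto one axis cannot preserve saturation unless all but one of those rays are already ``unit speed'' in their own coordinate, i.e.\ $l_2 = \dots = l_r = 1$.
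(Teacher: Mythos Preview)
Your framework matches the paper's: normality of $X$ amounts to saturation of $F^*(\sigma^\vee \cap \ZZ^{r+s})$ in $\ZZ^{1+s}$ (the paper writes $F$ for the lattice map underlying your $\varphi$), and $\ker F^* = \lin_\ZZ(e_i - e_j : 1 \le i,j \le r)$. One small slip: your ``wlog $i=r$'' is backwards; from $l_1 \ge \cdots \ge l_r$ and $l_i \ge 2$ for some $i \ge 2$ you conclude $l_2 \ge 2$, not $l_r \ge 2$. You cannot assume the \emph{smallest} of the $l_i$ exceeds~$1$.

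The real gap is that you never construct the witness, and calling this step ``routine-but-fiddly'' badly undersells it. The paper's argument runs to several pages in three stages. Step~1 ($s=1$): one finds the primitive generator $u$ of the lineality space of $\sigma^\vee$, uses that $F^*(u)$ must be primitive to write $F^*(u) = -\mu e_1 + \nu e_2$ with $\nu = \lcm(l_1,\ldots,l_r)$, chooses Bezout data $\alpha\nu + \beta\mu = 1$ to produce a point $\tilde u = \alpha e_1 + \beta e_2$ that must lie in $F^*(\sigma^\vee \cap \ZZ^{r+1})$, and then for each $i$ squeezes the possible integer shifts $\xi_i$ of a preimage into an interval of length $1/\nu$ around $-\beta d_i/l_i$; this forces $\delta_i \nu \le l_i$ for the fractional parts $\delta_i$, hence $\nu = l_1 = \cdots = l_k$ and $\delta_1 = \cdots = \delta_k = 1$, and a further counting argument with $\xi_1 + \cdots + \xi_r = 0$ excludes $k \ge 2$. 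Step~2 handles a special block-diagonal configuration with $s \ge 2$ by an analogous but genuinely different Bezout argument. Step~3 reduces the general case to these two by projecting $\ZZ^{r+s} \to \ZZ^{r+1}$ onto each $e_{r+j}$ separately (normality descends along the toric good quotient) and then reassembling. Your proposed witness built from $e_r - e_{r-1}$ plus ``a suitable positive combination'' is not the paper's construction, and it is not clear it can be made to work; the obstruction is genuinely arithmetic in the $l_i$ and $d_{ij}$ and their gcds, not a matter of checking a finite list of facet inequalities.
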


\begin{proof}
We split the considerations into three steps.
The first two steps handle special cases and
the third step shows how to settle the
general case by suitably combining the first
two.

\medskip

\noindent
\emph{Step 1.}
Let $s=1$. Then $\Gamma(X,\mathcal{O}) = \varphi^*\Gamma(Z,\mathcal{O})$
is a subalgebra of $\Gamma(\TT^2,\mathcal{O}) = \CC[\ZZ^2]$, namely
the monoid algebra associated with
\[
F^*(\sigma^\vee \cap \ZZ^{r+1})
\ \subseteq \
\ZZ^2,
\qquad
F \ := \
\left[
\begin{array}{cc}
\eins_{r} & 0
\\      
0 & 1
\end{array}
\right],
\qquad
\eins_r \ := \ (1,\ldots,1) \ \in \ \ZZ^r.
\]
Here, the $r\times 2$ matrix $F$ stores the
exponents of the homomorphism
$\varphi \colon \TT^{2} \to \TT^{r+1}$.
Write $d_i := d_{i1}$ and 
set $\ell := l_1 \cdots l_r$ and $\ell_i := \ell/l_i$.
Then we obtain a primitive vector
\[
u
\ := \
\frac{1}{\gcd(\ell_1d_1, \ldots, \ell_rd_r,\ell)} \cdot
\left(
- \ell_1d_1 e_1 - \ldots - \ell_rd_r e_r + \ell e_{r+1}
\right)
\ \in \ \ZZ^{r+1}.
\]
Note that $u$ generates the lineality space of the dual
cone $\sigma^\vee \subseteq \QQ^{r+1}$.
Moreover, $\sigma^\vee$ is generated as a cone
by $\pm u$ together with $e_1,\ldots,e_r$.
Observe
\[
\ker(F^*) \ = \ \lin_\ZZ (e_i-e_1; \, i=2,\ldots,r),
\qquad\qquad
\ker(F^*) \cap \sigma^\vee \ = \ \{0\}.
\]
In particular, $F^*$ maps the lineality space of
$\sigma^\vee$ onto that of $F^*(\sigma^\vee)$.
Thus, the group of invertible elements
of $F^*(\sigma^\vee \cap \ZZ^{r+1})$
is generated by 
\[
F^*(u)
\ := \
\frac{1}{\gcd(\ell_1d_1, \ldots, \ell_rd_r,\ell)} \cdot
\left(
- (\ell_1d_1 + \ldots + \ell_rd_r) e_1 + \ell e_2
\right)
\ \in \ \ZZ^{2}.
\]
By normality of $X$, the monoid
$F^*(\sigma^\vee \cap \ZZ^{r+1}) \subseteq \ZZ^{2}$ is saturated
and thus the vector $F^*(u) \in \ZZ^{2}$ is primitive.
The latter gives us
\[
\gcd(\ell_1d_1 + \ldots + \ell_rd_r,\ell)
\ = \ 
\gcd(\ell_1d_1, \ldots, \ell_rd_r,\ell)
\ = \
\gcd(\ell_1, \ldots, \ell_r).
\]
Now write $F^*(u) = - \mu e_1 + \nu e_2$.
Then $\mu$ and $\nu$ are coprime integers
with $\mu \ge 0$ and $\nu > 0$.
Using the above identity, we see
\[
\nu
\ = \
\frac{\ell}{\gcd(\ell_1d_1, \ldots, \ell_rd_r,\ell)}
\ = \
\frac{\ell}{\gcd(\ell_1, \ldots, \ell_r)}
\ = \
\lcm(l_1, \ldots, l_r).
\]
Moreover, $F^*(\sigma^\vee) \subseteq \QQ^2$
is the half space of vectors that evaluate
non-negatively on $\nu e_1 + \mu e_2$.
Choose $\alpha \in \ZZ$ and
$\beta \in \ZZ > 0$ such that 
$\alpha \nu  + \beta \mu = 1$. Consider
\[
\tilde u
\ := \
\alpha e_1 + \beta e_2
\ \in \
F^*(\sigma^\vee) \cap \ZZ^{2}.
\]
Then $\tilde u$ and the $\pm F^*(u)$
form a generator system for the monoid
$F^*(\sigma^\vee) \cap \ZZ^{2}$.
Consequently, using again normality of $X$, we have
\[
\tilde u
\ \in \
F^*(\sigma^\vee \cap \ZZ^{r+1}).
\]

Now, fix an integer $1 \le i \le r$.
Then the elements of $\ZZ^{r+1}$ mapping via $F^*$
onto the vector $\tilde u \in \ZZ^2$ are all of the form
\[
\tilde u_i (\xi)
 = 
(\xi_i+\alpha) e_i + \sum_{\iota \ne i} \xi_\iota e_\iota + \beta e_{r+1},
\quad
\xi_1, \ldots, \xi_r \in \ZZ, \ \xi_1+ \ldots + \xi_r = 0.
\]
At least one of them satisfies $\tilde u_i (\xi) \in \sigma^\vee$
and thus evaluates non-negatively on each of the generators
$v_1,\ldots,v_r \in \sigma$. Concretely that means
\[
(\xi_i + \alpha)l_i + \beta d_i \ \ge \ 0,
\qquad\qquad
\xi_\iota l_\iota + \beta d_\iota  \ \ge  \ 0, \quad \iota \ne i.
\]
Using $\alpha = (1-\beta \mu)/\nu$
and $\mu/\nu = d_1/l_1 + \ldots + d_r/l_r$
together with
$\xi_i = - \sum_{\iota \ne i} \xi_\iota$,
we can bound the integer $\xi_i$
from below and above via $-\beta d_i/l_i$
as follows: 
\[
-\alpha - \beta \frac{d_i}{l_i}
\ = \ 
\beta \sum_{\iota \ne i} \frac{d_\iota}{l_\iota}   - \frac{1}{\nu}
\ \le \
\xi_i
\ \le \ 
\beta  \sum_{\iota \ne i} \frac{d_\iota}{l_\iota}
\ = \
-\alpha - \beta \frac{d_i}{l_i} + \frac{1}{\nu}.
\]

Write
$\beta d_i/l_i = b_i + \delta_i/l_i$ with
$b_i \in \ZZ$ and $0 \le \delta_i < l_i$.
Adding $\alpha + b_i$ to the inequalities,
we see that existence of a
$\tilde u_i(\xi) \in \sigma^\vee$
for $i = 1, \ldots, r$ is equivalent to
\[
-\frac{\delta_i}{l_i}
\ \le \
0
\ \le \
\frac{1}{\nu} - \frac{\delta_i}{l_i},
\qquad i = 1,\ldots, r.
\]
We claim $\delta_i=0$ if and only if $l_i=1$.
Clearly, $l_i=1$ implies $\delta_i=0$.
Conversely, $\delta_i=0$ forces $l_i \mid \beta$,
which due to $l_i \mid \nu$ and
$\alpha \nu + \beta \mu = 1$ amounts to
$l_i=1$.

Now, let $1 \le k \le r$ be maximal with
$\delta_k > 0$. Then $l_i=1$ and $\delta_i=0$
for $i>k$.
Moreover, $l_1 \ge \ldots \ge l_k > 1$ and
$\delta_i \nu \le l_i$.
Thus, $\nu = \lcm(l_1,\ldots,l_r)$ gives us
\[
\nu 
\ = \
l_1
\ = \
\ldots
\ = \
l_k
\ > \
1,
\qquad
\delta_1
\ = \
\ldots
\ = \
\delta_k
\ = \
1.
\]
Consequently, $\ell_i = \nu^{k-1}$ for $i=1,\ldots,k$
and $\mu = d_1+\ldots+d_k$, as $l_i=1$ implies $d_i=0$.
With $\beta d_i/l_i = b_i + \delta_i$ as
above, the Bezout identity
$\alpha \nu + \beta \mu = 1$
finally becomes
\[
\alpha
\ = \
\frac{1-\beta \mu}{\nu}
\ = \
\frac{1}{\nu}
- \beta \frac{d_1+\ldots+d_k}{\nu}
\ = \
(1-k) \frac{1}{\nu}
- b_1 - \ldots - b_k .
\]
For $\nu = 1$ or $k=1$,
we obtain the assertion.
We exclude the case $\nu,k \ge 2$.
There we have $k-1 = \gamma \nu$ with
$\gamma \in \ZZ_{\ge 1}$.
Setting $b := b_1 + \ldots + b_k$ one obtains
\[
\xi_i
 \ge 
- \alpha - \beta \frac{d_i}{\nu}
 = 
\gamma + b - b_i - \frac{1}{\nu},
\qquad
\xi_\iota
 \ge 
- \beta \frac{d_\iota}{\nu}
=
\begin{cases}
-b_\iota - \frac{1}{\nu}, & i \ne \iota = 1, \ldots , k,
\\
0, & \iota = k+1, \ldots, r
\end{cases}
\]
by evaluating $\tilde u_i(\xi)$ on $v_1, \ldots, v_r$.
Due to $\xi \in \ZZ^{r+1}$, this means
$\xi_i \ge \gamma + b - b_i$ and $\xi_\iota \ge -b_\iota$
for $i \ne \iota = 1, \ldots , k$.
Thus, $0 = \xi_1 + \ldots + \xi_r \ge \gamma > 0$;
a contradiction.

\medskip

\noindent
\emph{Step 2.}
Assume $s \in \ZZ_{\ge 2}$ and that there is a $1 \le p \le r$
such that $v_i = e_i$ holds for $i = p+1, \ldots, r$ and
with $d_i := d_{ii}$, the vectors $v_1, \ldots, v_p$ are of
the shape
\[
v_i  =  l_i e_i + d_i e_{r+i},
\qquad
0 < d_i < l_i,
\qquad
\gcd(l_i,d_i) = 1.
\]
We show that this forces $p=1$.
As in the first step,
$\Gamma(X,\mathcal{O}) = \varphi^*\Gamma(Z,\mathcal{O})$
is the subalgebra of $\Gamma(\TT^{1+s},\mathcal{O}) = \CC[\ZZ^{1+s}]$
given by the monoid algebra associated with 
\[
F^*(\sigma^\vee \cap \ZZ^{r+s})
\ \subseteq \
\ZZ^{1+s},
\qquad\qquad
F \ := \
\left[
\begin{array}{cc}
\eins_{r} & 0
\\      
0 & E_s
\end{array}
\right],
\]
where $\eins_{r} = (1,\ldots,1) \in \ZZ^r$
and this time we encounter the $s \times s$
unit matrix $E_s$ instead of just $1$.
Consider the primitive vectors 
\[
u_i \ := \ -d_i e_i + l_i e_{r+i} \ \in \ \ZZ^{r+s},
\quad i = 1 ,\ldots, p.
\]
Then $u_1, \ldots, u_p,e_{r+p+1}, \ldots, e_{r+s}$ generate
the lineality space of $\sigma^\vee \subseteq \QQ^{r+s}$ 
and the cone $\sigma^\vee$ is generated by its
lineality space together with $e_1, \ldots, e_r$.
As before,
\[
\ker(F^*) \ = \ \lin_\ZZ (e_i-e_1; \, i=2,\ldots,r),
\qquad\qquad
\ker(F^*) \cap \sigma^\vee \ = \ \{0\}.
\]
So, $F^*$ maps the lineality space of
$\sigma^\vee$ onto that of
$F^*(\sigma^\vee) \subseteq \QQ^{1+s}$.
Thus, the lineality space of $F^*(\sigma^\vee)$
is generated by the vectors 
$e_{p+1}, \ldots, e_s$ and
\[
F^*(u_i)
\  = \
-d_i e_1 + l_i e_{1+i}
\ \in \
\ZZ^{1+p}
\ \subseteq \ 
\ZZ^{1+s},
\qquad
i = 1 , \ldots , p.
\]
As a cone $\sigma^\vee$ is generated by 
its lineality space and the vector $e_1$.
We conclude that $\sigma^\vee \subseteq \QQ^{1+s}$
is the positive half space of the linear form
\[
\ell e_1 + \ell_1 d_1 e_2 + \ldots +  \ell_p d_p e_{1+p}
\ \in \
\ZZ^{1+s}.
\]
By normality of $X$, the $F^*(u_i)$ must generate
a primitive sublattice of $\ZZ^{1+s}$.
Thus, there is a vector
$\tilde u = \alpha e_1 + \beta_1 e_1 + \ldots + \beta_p e_p
\in \ZZ^{1+p} \subseteq \ZZ^{1+s}$
satisfying
\[
\alpha \ell + \beta_1 \ell_1 d_1 + \ldots + \beta_p \ell_p d_p 
\ = \   
\det(\tilde u,F^*(u_1),\ldots,F^*(u_p))
\ = \
1.
\]
Note that $\tilde u \in F^*(\sigma^\vee)$.
Again by normality of $X$, we have
$\tilde u \in F^*(\sigma^\vee \cap \ZZ^{r+s})$.
For fixed $i$, the elements of $\ZZ^{r+s}$ mapping
via $F^*$ onto $\tilde u$ are all of the form
\[
\tilde u_i(\xi)
\ = \
\alpha e_i + \sum_{\iota = 1}^r \xi_\iota e_\iota + \sum_{j = 1}^{p} \beta_j e_{r+j},
\qquad
\xi_i \in \ZZ,
\quad
\xi_1+ \ldots + \xi_r = 0.
\]
At least one of them satisfies $\tilde u_i (\xi) \in \sigma^\vee$
and thus evaluates non-negatively on each of the generators
$v_1,\ldots, v_r \in \sigma$. Concretely that means
\[
(\xi_i+\alpha)l_i + \beta_i d_{i} \ge 0,
\quad
\xi_\iota l_\iota + \beta_\iota d_{\iota}  \ge  0,
\ \iota = 1, \ldots, p, \ \iota \ne i,
\quad
\xi_\iota \ge 0,
\ \iota = p+1, \ldots, r.
\]
The defining property of $\tilde u$ allows us to express
$\alpha$ in terms of the $\beta_\iota d_\iota/l_\iota$ and
$1/\ell$.
Together with $\xi_1+ \ldots + \xi_r = 0$, this brings us
to 
\[
- \alpha - \beta_i \frac{d_i}{l_i} 
\ \le \
\xi_i
\ \le \
- \alpha - \beta_i \frac{d_i}{l_i} + \frac{1}{\ell}.
\]
Now writing $\beta_id_i/l_i = b_i + \delta_i/l_i$ with
integers $b_i$ and $0 \le \delta_i < l_i$ and
adding $\alpha+b_i$ to each side of the inequalities,
we obtain 
\[
-\frac{\delta_i}{l_i}
\ \le \
\xi_i + \alpha + b_i
\ \le \
-\frac{\delta_i}{l_i}
+
\frac{1}{\ell}.
\]
Due to $\xi_i + \alpha + b_i \in \ZZ$, we can
replace the l.h.s.~bound with zero.
Thus, $\ell_i\delta_i \le 1$, hence
$\ell_i=\delta_i=1$ or $\delta_i=0$.
If $\ell_i=\delta_i=1$ for some $i$,
we are done. Otherwise
\[
\delta_1 \ = \ \ldots \ = \ \delta_p \ = \ 0.
\]
This implies $l_i \mid \beta_i$ for $i = 1, \ldots, p$,
say $\beta_i = \beta_i'l_i$. Then
$\ell(\alpha + \beta_1'd_1+\ldots+ \beta_p'd_p)=1$
by the defining property of $\tilde u$.
Thus, $\ell=1$ and we are done es well.

\medskip

\noindent
\emph{Step 3.}
We treat the general case.
First we perform a series of reductions
to the setting of Step~1.
Fix $1 \le j \le s$.
Then, for $i = 1,\ldots, r$, consider
\[
c_{ij} := \gcd(l_i,d_{ij}),
\quad
l_{ij}'  := l_i/c_{ij},
\quad
d_{ij}'  :=  d_{ij}/c_{ij},
\quad
v_{ij}'  :=  l_{ij}'e_i + d_{ij}'e_{r+1} \in \ZZ^{r+1}.
\]
Then the projection $\ZZ^{r+s} \to \ZZ^{r+1}$,
sending onto the coordinates
$e_1,\ldots,e_r,e_{r+j}$
maps $v_i$ to $c_{ij}v_{ij}'$ and
hence $\sigma \subseteq \QQ^{r+s}$ onto
\[
\sigma'
\ := \
\cone(v_{1j}', \ldots, v_{rj}')
\ \subseteq \
\QQ^{r+1}.
\]
Except $l_{1j}' \ge \ldots \ge l_{rj}'$, the cone
$\sigma'$ and $\varphi' \colon \TT^2 \to \TT^{r+1}$,
defined accordingly, satisfy all assumptions of
the Lemma.
Indeed, we have a commutative diagram
\[
\xymatrix{
X
\ar@{}[r]|\subseteq
\ar[d]_{\pi}
&
Z
\ar[d]^{\pi}
\\
X'
\ar@{}[r]|\subseteq
&
Z',
}
\]
where $\pi \colon Z \to Z'$ is the toric morphism
given by the projection $\ZZ^{r+s} \to \ZZ^{r+1}$.
Note that $\pi$ is the good quotient for the
action of the subtorus
\[
\TT_j
\ := \
\{t \in \TT^{r+s}; \ t_i = 1, \ i=r+1, \ldots, r+s, \ i \ne j\}
\ \subseteq \
\TT^{r+s}.
\]
The definition of $\varphi$ and $\varphi'$ ensure
that $X$ is invariant under $\TT_j$ and $X' = \pi(X)$.
Consequently, $X' = X \quot \TT_j$ inherits normality
from $X$.

Now Step~1 tells us that for very
$j = 1, \ldots, r$, at most one
of $l_{1j}', \ldots, l_{rj}'$ 
differs from one.
Let $(i_1,j_1), \ldots, (i_p,j_p)$
where $1 \le p \le s$, pick precisely
the $l_{ij}'>1$.
Then 
\[
l_{i_k} \ = \ l_{i_kj_k}'c_{i_kj_k} \ > \ c_{i_kj_k},
\qquad
l_i \ = \ c_{ij_k}, \quad \ i \ne i_k.
\]
Whenever $l_i = c_{ij}$ holds, we have $l_i \mid d_{ij}$,
hence $d_{ij}=0$.
Applying a suitable unimodular $s \times s$ matrix
to the last $s$ coordinates yields
\[
v_{i_k} \ = \ l_{i_k} e_{i_k} + d_{j_k} e_{r + j_k},
\qquad
v_i \ = \ e_i, \quad  i = 1, \ldots, r, \ i \ne i_k
\]
with $d_{j_k} := \gcd(d_{1j_k}, \ldots, d_{sj_k})$
and hence $\gcd(l_{i_k},d_{j_k}) = c_{i_kj_k} = 1$.
Suitably renumbering the $v_i$, we arrive at the
setting of Step~2, which gives the assertion.
\end{proof}


\section{Constructing toric degenerations}
\label{sec:constr-tordeg}

We construct toric degenerations of rational varieties
with a torus action of complexity
one in terms of the defining data $(A,P,\Sigma)$.
The subsequent Construction~\ref{constr:deformdata}
and Proposition~\ref{prop:fibers} reproduce in
particular~\cite[Prop.~4.1,~4.2 and Rem.~4.3,~4.4]{IlSu},
treating the projective case via the approach of
polyhedral divisors from~\cites{AlHa,AlHaSu}.
Note that our setting is more general and covers
for instance the semiprojective and thus in particular
the affine case, as needed in Section~\ref{sec:polystability}.

\begin{construction}
\label{constr:deformdata}
Consider defining data $(A,P,\Sigma)$. Given
integers $0 \le \kappa \le r$ and $\ell \ge 1$,
we define a vector in $\ZZ^{r+s}$ by
$$
\nu_{\kappa}
\ := \
\begin{cases}
-\ell e_0 - \ldots - \ell e_r,
&
\kappa = 0,
\\
\ell e_\kappa,
&
\kappa > 0
\end{cases}
$$
and obtain new defining data
$(A,P_\kappa,\Sigma_\kappa)$ with an
$(r+s+1) \times (n+1+m)$ matrix~$P_\kappa$ and a
fan $\Sigma_\kappa$ in $\ZZ^{r+s+1}$ via the
following procedure.
\begin{enumerate}
\item
The matrix $P_\kappa$ arises from $P$ by first
appending a zero row at the bottom and then
inserting $(\nu_{\kappa}, 1)$ as a new
column at the place $ij$ with $i = \kappa$ and
$j= n_\kappa +1$.
\item
The fan $\Sigma_\kappa$ in $\ZZ^{r+s+1}$ has the
maximal cones
$(\sigma \times 0) + \varrho_{\kappa n_\kappa +1}$,
where $\sigma$ runs through the maximal cones of
$\Sigma$ and $\varrho_{\kappa n_\kappa +1}$
denotes the ray through the new column.
\end{enumerate}
We will denote by $Z$ the toric variety defined
by $\Sigma$ and by $X \subseteq Z$ the
$\TT^{s}$-variety arising from $(A,P,\Sigma)$.
Similarly, $Z_\kappa$ is the toric variety
defined by $\Sigma_\kappa$ and
$\mathcal{X}_\kappa \subseteq Z_\kappa$ the
$\TT^{s+1}$-variety arising from
$(A,P_\kappa,\Sigma_\kappa)$.
\end{construction}

We take a look at the geometry of this
construction. First, let us see in detail how the
involved ambient toric varieties $Z_\kappa$ and
$Z$ interact.

\begin{remark}
\label{rem:ambientfamily}
In the setting of Construction~\ref{constr:deformdata},
let $F_\kappa \colon \ZZ^{r+s+1} \to \ZZ^{r+s+1}$ be
the linear isomorphism keeping $e_i$ fixed for
$i = 1,\ldots, r+s$ and sending $e_{r+s+1}$ to
the vector $(-\nu_\kappa, 1)$. Then we
have a commutative diagram, where both downwards
arrows represent the projection onto the
$(r+s+1)$-th coordinate:
\[
\xymatrix{
{\ZZ^{r+s+1}}
\ar[rr]^{F_\kappa}_{\cong}
\ar[dr]
&&
{\ZZ^{r+s+1}}
\ar[dl]
\\
&
{\ZZ}
&
}
\]
The map $F_\kappa$ is an isomorphism of fans from
$\Sigma_\kappa$ in $\ZZ^{r+s+1}$  to the fan
product of~$\Sigma$ in $\ZZ^{r+s}$ and the fan of
faces of $\QQ_{\ge 0}$ in $\ZZ$. Accordingly, we
have a commutative diagram of the associated
toric morphisms, involving the ambient toric
varieties $Z_\kappa$ of $\mathcal{X}_\kappa$ and
$Z$ of $X$:
\[
\xymatrix{
Z_\kappa
\ar[rr]^{\varphi_\kappa}_{\cong}
\ar[dr]_{\Psi_\kappa}
&&
Z \times \CC
\ar[dl]^{\pr_\CC}
\\
&
{\CC}
&
}
\]
Observe that $\Psi_\kappa$ is given in Cox
coordinates by
$[z_{ij},z_k] \mapsto z_{\kappa n_\kappa+1}$.
Moreover, in terms of the acting tori
$\TT^{r+s+1}$ of $Z_\kappa$ and $\CC^*$ of $\CC$,
the map $\Psi_\kappa$ sends an element
$t=(t_1, \ldots, t_{r+s+1})$ to its last
coordinate $t_{r+s+1}$.
In particular, for all
points $z \in Z_\kappa$ and all
$t \in \TT^{r+s+1}$, we have
\[
\Psi_\kappa(t \cdot z)
\ = \
t_{r+s+1} \Psi_\kappa(z).
\]
Finally, the fiber $\Psi_\kappa^{-1}(0)$ equals
the toric prime divisor of $Z_\kappa$ defined
by the ray through $v_{\kappa n_\kappa+1} = (\nu_\kappa,1)$
and thus, being a toric orbit closure, it comes with
the structure of a toric variety. We will
identify the toric variety $\Psi_\kappa^{-1}(0)$
with $Z$ via the toric morphism given by
\[
\ZZ^{r+s+1} / \ZZ v_{\kappa n_\kappa+1}
\ \to \
\ZZ^{r+s},
\qquad
v + \ZZ v_{\kappa n_\kappa+1}
\ \mapsto \
\pr_{\ZZ^{r+s}} \circ F_\kappa(v).
\]
\end{remark}

Now we examine the family
$\mathcal{X}_\kappa \to \CC$. The first of the
subsequent two remarks relates the Cox ring of
$\mathcal{X}_\kappa$ to that of $X$. In the
second one, we take a look at the fibers and at
the equivariance properties of the family.

\begin{remark}
\label{rem:CRoffamily}
Consider the $\TT^s$-variety $X$ arising from
$(A,P,\Sigma)$ as in Construction~\ref{constr:RAPdown}.
Recall that we have $\Cl(X) = K$ for the divisor
class group and that the Cox ring is given by 
\[
\mathcal{R}(X)
\ = \
\CC[T_{ij},S_k] / \bangle{g_2, \ldots, g_r},
\qquad
g_\iota
 = 
\det
\left[
\begin{array}{ccc}
T_0^{l_0} & T_1^{l_1} & T_\iota^{l_\iota}
\\
a_0 & a_1 & a_\iota
\end{array}
\right],
\]
where the $K$-degrees of the $T_{ij}$ and $S_k$
are the classes of the basis vectors $e_{ij}$
and~$e_k$ in $K = \ZZ^{n+m}/ \im(P^*)$, respectively.
Now consider the $\TT^{s+1}$-variety
$\mathcal{X}_\kappa$ arising from the data
$(A,P_\kappa,\Sigma_\kappa)$ as in
Construction~\ref{constr:deformdata}.
Then we have 
\[
\Cl(\mathcal{X}_\kappa)
\ = \
\ZZ^{n+1+m}/ \im(P_\kappa^*)
\ = \
K,
\qquad
\mathcal{R}(\mathcal{X}_\kappa)
\ = \
\CC[T_{ij},S_k] / \bangle{g_{\kappa, 2}, \ldots, g_{\kappa, r}},
\]
where the new variable $T_{\kappa n_\kappa +1}$
is of $K$-degree zero and all other variables
$T_{ij}$ and~$S_k$ have the same $K$-degree in
$\mathcal{R}(\mathcal{X}_\kappa)$ as they have in
$\mathcal{R}(X)$. Moreover, the defining
relations $g_{\kappa, \iota}$ arise from the
$g_\iota$ by replacing 
$T_\kappa^{l_\kappa}$ with
$T_\kappa^{l_\kappa}T_{\kappa n_\kappa +1}^\ell$.
\end{remark}

\begin{remark}
\label{rem:firstpropsfam}
Consider $X \subseteq Z$ and
$\mathcal{X}_\kappa \subseteq Z_\kappa$
as in Construction~\ref{constr:deformdata}.
Restricting $\Psi_\kappa \colon Z_\kappa \to \CC$
from Remark~\ref{rem:ambientfamily} gives a
morphism
$\psi_\kappa \colon \mathcal{X}_\kappa \to \CC$.
The vanishing ideal of the fiber over
$\zeta \in \CC$ in Cox coordinates is given as
\[
I(\psi_\kappa^{-1}(\zeta))
\ = \
\bangle{g_{\kappa, 2}, \ldots, g_{\kappa, r}}
+
\bangle{T_{\kappa n_\kappa+1} - \zeta}.
\]
Moreover, the morphism
$\psi_\kappa \colon \mathcal{X}_\kappa \to \CC$
is compatible with the $\TT^{s+1}$-action on
$\mathcal{X}_\kappa$ and the multiplication on
$\CC$ in the sense that for every point
$z \in \mathcal{X}_\kappa$ and every element
$t = (t_1,\ldots, t_{s+1}) \in \TT^{s+1}$ we
have 
\[
\psi_\kappa(t \cdot z)
\ = \
t_{s+1} \psi_\kappa(z).
\]
In particular, for any $v \in \ZZ^{s+1}$ of the
form $v = (v_1,\ldots,v_s,1)$, the corresponding
one parameter subgroup
$\lambda_v \colon \CC^* \to \TT^{s+1}$ gives a
$\CC^*$-action on $\mathcal{X}_\kappa$ such that
for all $z \in \mathcal{X}_\kappa$ and
$t \in \CC^*$ we have
\[
\psi_\kappa (\lambda_v(t) \cdot x) 
\ = \
t \psi_\kappa(z).
\]
With every one parameter subgroup
$\lambda_v \colon \CC^* \to \TT^{s+1}$ with
$v=(v_1,\ldots,v_s,1)$ as above,
we associate an automorphism
$\alpha_v \colon \TT^{s+1} \to \TT^{s+1}$
and a twist of the action of $\TT^{s+1}$ on
$Z_\kappa$ and $\mathcal{X}_\kappa$
by setting
$$
\alpha_v(t)
\ := \ 
(t_1t_{s+1}^{v_1},\ldots,t_st_{s+1}^{v_s},t_{s+1}),
\qquad
t \ast z
\ := \
\alpha(t) \cdot z.
$$
\end{remark}

We gather the relevant properties of the
family $\psi_\kappa \colon \mathcal{X}_\kappa \to \CC$,
its fibers and, in particular, the special
fiber $\mathcal{X}_{\kappa,0}$. It turns out that this a
possibly non-normal toric variety. We determine
the associated fan and
characterize, when it is normal.

\begin{construction}
\label{constr:fankappa}
Consider defining data $(A,P,\Sigma)$.
The leaves of the associated tropical
variety are the cones
\[
\tau_{i}
\ = \
\cone(e_i) + \lin(e_{r+1}, \ldots, e_{r+s}),
\qquad\qquad
i \ = \ 0, \ldots, r,
\]
where $e_1, \ldots, e_{r+s} \in \QQ^{r+s}$ are
the canonical basis vectors and
$e_{0}=-e_{1}-\dots-e_{r}$. Then, for
$\kappa=0,\dots, r$, we obtain lattice fans
$(\Delta_\kappa,N_\kappa)$ by setting
\[
V_{\kappa} \ := \ \lin(\tau_\kappa),
\qquad
N_\kappa \ := \ \ZZ^{r+s} \cap V_\kappa,
\qquad
\Delta_\kappa \ := \ \{\sigma \cap V_\kappa; \ \sigma \in \Sigma\}.
\]
We define the \emph{antitropical coordinates} of a vector
$v \in N_\kappa$ to be $\eta_\kappa^{-1}(v) \in \ZZ^{s+1}$,
where 
\[
\eta_\kappa \colon \ZZ^{s+1} \ \to \ N_\kappa,
\qquad
e_i \ \mapsto \
\begin{cases}
e_{r+i},  & i = 1 ,\ldots, s,
\\
-e_{\kappa}, & i = s+1.
\end{cases}
\]
\item
We denote by $\Delta_\kappa^{\mathrm{at}}$ the fan in $\ZZ^{s+1}$
representing $(\Delta_\kappa,N_\kappa)$ in antitropical coordinates,
that means
\[
\Delta_\kappa^{\mathrm{at}}
\ = \
\{\eta_\kappa^{-1}(\sigma); \ \sigma \in \Sigma\}.
\]
Moreover, the \emph{antitropical half space} in $\QQ^{s+1}$
is the set of points having non-negative last
antitropical coordinate:
\[
\mathcal{H}_\kappa
\ := \
\{v \in \QQ^{s+1}; \ v_{s+1} \ge 0\}.
\]
\end{construction}

\begin{proposition}
\label{prop:fibers}
Consider $X = X(A,P,\Sigma) \subseteq Z$
and $\psi_\kappa \colon \mathcal{X}_\kappa \to \CC$
as provided by Construction~\ref{constr:deformdata}
and Remark~\ref{rem:firstpropsfam}. For $\zeta \in \CC$ write
$\mathcal{X}_{\kappa,\zeta} := \psi_\kappa^{-1}(\zeta)$.
\begin{enumerate}
\item
The variety $\mathcal{X}_\kappa$ is irreducible
and normal and 
$\psi_\kappa \colon \mathcal{X}_\kappa \to \CC$
is a flat family.
Moreover, $\psi_\kappa$ is proper (projective)
if $Z$ is complete (projective).
\item
For $\zeta \ne 0$, we have
$\mathcal{X}_{\kappa,\zeta} \cong X$.
The fiber $\mathcal{X}_{\kappa,0}$ is the closure
of a $\TT^{s+1}$-orbit in $\mathcal{X}_\kappa$
and hence an irreducible toric variety.
\item
As a toric variety, the fiber $\mathcal{X}_{\kappa,0}$
is isomorphic to the closure of the subtorus
$\TT_\kappa \subseteq \TT^{r+s} \subseteq Z$ given by
\[
\TT_\kappa
\ = \
\begin{cases}
t_1=t_{i}, \ i = 2 , \ldots, r & \kappa = 0,
\\[.5ex]
t_i= 1, \ i = 1 , \ldots, r, \, i \ne \kappa,
& \kappa \ne 0.
\end{cases}
\]  
Moreover, $\mathcal{X}_{\kappa,0}$ has $(\Delta_\kappa,N_\kappa)$
as its convergency fan and the toric variety 
associated with $(\Delta_\kappa,N_\kappa)$ as its
normalization.
\item
For $J = (j_i; \, i = 0, \ldots, r, \, i \ne \kappa)$,
set $\sigma_J := \cone(v_{ij_i}; \, i = 0, \ldots, r, \, i \ne \kappa)$.
Then each $\sigma_J \in \Sigma$ defines a prime divisor
$$
D(\sigma_J)
\ := \
\bigcap_{i \ne \kappa} D_{ij_i}
\subseteq
\mathcal{X}_{\kappa,0}.
$$
Moreover, the toric boundary 
$\mathcal{X}_{\kappa,0} \setminus \TT^{r+s}$
decomposes into a union of prime divisors
as follows:
$$
\qquad\qquad
\mathcal{X}_{\kappa,0} \setminus \TT^{r+s}
\ = \ 
\bigcup_{\sigma_J \in \Sigma} D(\sigma_J)
\ \cup \
\bigcup_{j = 1}^{n_\kappa} D_{\kappa j} \cap \mathcal{X}_{\kappa,0}
\ \cup \
\bigcup_{k = 1}^{m} D_k \cap \mathcal{X}_{\kappa,0}.
$$
\item
The fiber $\mathcal{X}_{\kappa,0}$ is normal if
and only if for every cone
$\sigma(J) \in \Sigma$,
we have $l_{j_i} > 1$ for at most one $0 \le i \le r$
distinct from $\kappa$. 
\item
The fiber $\mathcal{X}_{\kappa,0}$ is affine
(semiprojective, projective, complete) if $Z$
is affine (semiprojective, projective,
complete).
\end{enumerate}
\end{proposition}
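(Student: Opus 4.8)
The plan is to treat the six assertions largely in the order stated, exploiting that by Remark~\ref{rem:ambientfamily} the ambient toric morphism $\Psi_\kappa \colon Z_\kappa \to \CC$ is, after the fan isomorphism $F_\kappa$, nothing but the projection $Z \times \CC \to \CC$, and that by Remark~\ref{rem:CRoffamily} the Cox ring of $\mathcal{X}_\kappa$ is obtained from that of $X$ by adjoining one degree-zero variable $T_{\kappa n_\kappa+1}$ and replacing $T_\kappa^{l_\kappa}$ with $T_\kappa^{l_\kappa}T_{\kappa n_\kappa+1}^\ell$ in the trinomial relations. For (i), irreducibility and normality of $\mathcal{X}_\kappa = X(A,P_\kappa,\Sigma_\kappa)$ are automatic from Construction~\ref{constr:RAPdown} (the data $(A,P_\kappa,\Sigma_\kappa)$ are again admissible, as one checks the columns of $P_\kappa$ are pairwise distinct primitive generators spanning $\QQ^{r+s+1}$). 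Flatness: $\mathcal{X}_\kappa$ is irreducible and $\psi_\kappa$ is non-constant onto the smooth curve $\CC$, hence every fiber has codimension one and $\psi_\kappa$ is flat by the standard criterion (a dominant morphism from an integral scheme to a smooth curve is flat). Properness/projectivity of $\psi_\kappa$ follows from that of $\Psi_\kappa = \pr_\CC$ after $F_\kappa$, since $\mathcal{X}_\kappa \subseteq Z_\kappa$ is closed; and $\pr_\CC \colon Z \times \CC \to \CC$ is proper (projective) exactly when $Z$ is complete (projective).

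For (ii), the general fiber: by Remark~\ref{rem:firstpropsfam} the ideal of $\mathcal{X}_{\kappa,\zeta}$ in Cox coordinates is $\langle g_{\kappa,2},\dots,g_{\kappa,r}\rangle + \langle T_{\kappa n_\kappa+1}-\zeta\rangle$; for $\zeta \neq 0$ the automorphism of $\CC[T_{ij},S_k]$ rescaling the variables in the $\kappa$-block by $\zeta^{-l_{\kappa j}/l_\kappa}$—or more cleanly, using that $\TT^{s+1}$ acts with $\psi_\kappa(t\cdot z) = t_{s+1}\psi_\kappa(z)$—carries $\mathcal{X}_{\kappa,\zeta}$ equivariantly to $\mathcal{X}_{\kappa,1} \cong X$. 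That the special fiber $\mathcal{X}_{\kappa,0}$ is the closure of a single $\TT^{s+1}$-orbit, hence an irreducible toric variety, I would see as follows: $\psi_\kappa^{-1}(0)$ sits inside the toric prime divisor $\Psi_\kappa^{-1}(0) \subseteq Z_\kappa$ (a toric orbit closure, hence toric); setting $T_{\kappa n_\kappa+1}=0$ in $g_{\kappa,\iota}$ kills the $T_\kappa^{l_\kappa}T_{\kappa n_\kappa+1}^\ell$ term, so the relations degenerate to binomials $\lambda_\iota T_0^{l_0}\cdot(\dots) $—i.e.\ to binomial (toric) relations—whose common zero set inside $\hat Z_\kappa$ descends to a torus orbit closure. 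For (iii), I would identify this orbit closure: the binomial relations say precisely that on the open torus we are on the subtorus where the ratios of the $T_i^{l_i}$'s (for $i\neq\kappa$) are the prescribed constants $\lambda_i$, which after passing through the quotient map $p$ translates into the subtorus $\TT_\kappa \subseteq \TT^{r+s}$ described by $t_1=\dots=t_r$ (if $\kappa=0$) or $t_i=1$ for $i\neq\kappa$ (if $\kappa\neq 0$). The convergency/normalization statement then follows by matching the fan $\Delta_\kappa^{\mathrm{at}}$ of Construction~\ref{constr:fankappa} with the fan of $\mathcal{X}_{\kappa,0}$ via the antitropical coordinate map $\eta_\kappa$—this is the bookkeeping step identifying $N_\kappa = \ZZ^{r+s}\cap V_\kappa$ with the lattice through which $\TT_\kappa$ is defined.

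For (iv), the decomposition of the toric boundary of $\mathcal{X}_{\kappa,0}$: each prime divisor of a toric variety comes from a ray of its (convergency) fan; the rays of $\Delta_\kappa$ are the traces on $V_\kappa = \lin(\tau_\kappa)$ of the cones $\sigma \in \Sigma$, together with the surviving columns $v_{\kappa j}$ and $v_k$ that already lie in $V_\kappa$. A cone $\sigma \in \Sigma$ meets $V_\kappa$ in a ray exactly when $\sigma$ is spanned by one column from each non-$\kappa$ fiber, i.e.\ $\sigma = \sigma_J$; intersecting the corresponding toric divisors $D_{ij_i}$ of $Z$ cuts out the divisor $D(\sigma_J)$ on $\mathcal{X}_{\kappa,0}$, and together with the $D_{\kappa j}$ and $D_k$ these exhaust the rays, giving the asserted union. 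For (v), normality: $\mathcal{X}_{\kappa,0}$ is normal iff its convergency fan $\Delta_\kappa$ already is the fan of a normal toric variety, equivalently iff each maximal cone's primitive generators are part of a lattice basis of $N_\kappa$ relative to the relevant sublattice—and this is precisely the content of Proposition~\ref{lem:loch}: after reducing (via a toric quotient, as in Step~3 of that proof) to the subtorus $\TT_\kappa$-closure, normality forces $l_{j_i}=1$ for all but at most one $i\neq\kappa$ in each $\sigma_J$. Conversely, when that condition holds the generators are unimodular and the variety is normal. This is the step I expect to be the main obstacle, and it is exactly why the normality lemma was proved beforehand; the bulk of the present proof is reducing assertion (v) to an application of Proposition~\ref{lem:loch} with $\sigma = \sigma_J \cap V_\kappa$ and $\varphi = \eta_\kappa$. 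Finally (vi) is immediate: $\mathcal{X}_{\kappa,0}$ is a closed toric subvariety of $Z$ (via the identification of $\Psi_\kappa^{-1}(0)$ with $Z$ in Remark~\ref{rem:ambientfamily}), and a closed subvariety of an affine (semiprojective, projective, complete) variety has the same property, while $Z$ carries that property by hypothesis; alternatively read it off $\psi_\kappa$ being affine/projective/proper over $\CC$ together with $\{0\}$ being a point.
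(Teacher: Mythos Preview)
Your treatment of (i), (ii), (iii), and (vi) matches the paper's approach and is fine. The substantive divergence is in (iv) and (v), and there is a genuine gap in your handling of the ``if'' direction of (v).

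For (v) you write that $\mathcal{X}_{\kappa,0}$ is normal ``iff its convergency fan $\Delta_\kappa$ already is the fan of a normal toric variety, equivalently iff each maximal cone's primitive generators are part of a lattice basis\dots\ and this is precisely the content of Proposition~\ref{lem:loch}.'' This is confused on two counts. First, $\Delta_\kappa$ is by construction the fan of a \emph{normal} toric variety (the normalization of $\mathcal{X}_{\kappa,0}$); the question is whether $\mathcal{X}_{\kappa,0}$ coincides with that normalization, i.e.\ whether the relevant semigroups are saturated. Second, Proposition~\ref{lem:loch} is a one-way implication: it says \emph{normality forces} $l_2=\dots=l_r=1$. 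It gives you the ``only if'' direction of (v), exactly as you say, but it gives nothing for the converse. Your sentence ``Conversely, when that condition holds the generators are unimodular and the variety is normal'' is an unproved assertion.

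The paper supplies the ``if'' direction by a Jacobian argument carried out in Cox coordinates, and this is also why the paper proves (iv) upstairs in $\bar Z = \CC^{n+m}$ rather than via rays of $\Delta_\kappa$. Concretely: by Lemma~\ref{lem:binomial} the fiber $\mathcal{X}_{\kappa,0}$ is cut out in Cox coordinates by $r-1$ binomials of the form $T_0^{l_0}+b_iT_i^{l_i}$ (or the analogous system for $\kappa=0$). One checks that the Jacobian of this system drops below rank $r-1$ at a point of $\bar{\mathcal{X}}_{\kappa,0}$ precisely when the point lies on some $D(J)=V(T_{ij_i};\, i\neq\kappa)$ with at least two of the exponents $l_{ij_i}$ exceeding $1$. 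Hence if the condition in (v) holds for every $\sigma_J\in\Sigma$, then $\hat{\mathcal{X}}_{\kappa,0}=\bar{\mathcal{X}}_{\kappa,0}\cap\hat Z$ is smooth (so in particular normal), and normality descends along the good quotient $p$ to $\mathcal{X}_{\kappa,0}$. This Jacobian step is the missing ingredient in your argument; without it, (v) is not established.
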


\begin{lemma}
\label{lem:binomial}
Consider $\Psi_\kappa \colon Z_\kappa \to \CC$
and its restriction
$\psi_\kappa \colon \mathcal{X}_\kappa \to \CC$.
Then we have 
$\mathcal{X}_{\kappa,0} = \psi_\kappa^{-1}(0)
\subseteq \Psi_\kappa^{-1}(0) = Z$. Moreover,
with suitable $b_i \in \CC^*$ the vanishing
ideal $\mathcal{I}_{\kappa,0}$ of 
$\mathcal{X}_{\kappa,0}$ in Cox coordinates of
$Z$ is the $K$-prime binomial ideal
\[
\begin{array}{lclcl}
\mathcal{I}_{\kappa,0}
& = & 
\bangle{T_1^{l_1} + b_iT_{i}^{l_{i}}; \ i = 2, \ldots, r},
& &
\text{if } \kappa = 0,
\\[2ex]
\mathcal{I}_{\kappa,0}
& = & 
\bangle{T_0^{l_0} + b_iT_{i}^{l_{i}}; \ i = 1, \ldots, r, \ i \ne \kappa},
& &
\text{if } \kappa \ne 0.
\end{array}
\]
\end{lemma}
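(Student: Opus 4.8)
The strategy is to work entirely in Cox coordinates and transfer the defining equations of the total space $\mathcal{X}_\kappa$ through the identification of the special fiber $\Psi_\kappa^{-1}(0)$ with the ambient $Z$ supplied by Remark~\ref{rem:ambientfamily}. First I would recall from Remark~\ref{rem:firstpropsfam} that the vanishing ideal of the zero fiber $\psi_\kappa^{-1}(0)$ in Cox coordinates of $Z_\kappa$ is $\langle g_{\kappa,2},\ldots,g_{\kappa,r}\rangle + \langle T_{\kappa n_\kappa+1}\rangle$. Since $\Psi_\kappa^{-1}(0)$ is the toric prime divisor cut out by $T_{\kappa n_\kappa+1}=0$, which Remark~\ref{rem:ambientfamily} identifies with $Z$ itself, the inclusion $\mathcal{X}_{\kappa,0}\subseteq Z$ is immediate, and $\mathcal{I}_{\kappa,0}$ is obtained from $\langle g_{\kappa,2},\ldots,g_{\kappa,r}\rangle$ by setting $T_{\kappa n_\kappa+1}=0$ in the Cox ring $\CC[T_{ij},S_k]$ of $Z$.

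Next I would carry out that substitution explicitly. By Remark~\ref{rem:CRoffamily}, $g_{\kappa,\iota}$ is obtained from $g_\iota$ by replacing the monomial $T_\kappa^{l_\kappa}$ with $T_\kappa^{l_\kappa}T_{\kappa n_\kappa+1}^\ell$; setting $T_{\kappa n_\kappa+1}=0$ therefore kills every occurrence of $T_\kappa^{l_\kappa}$. Expanding the $2\times 2$ determinant in the definition of $g_\iota$ from Construction~\ref{constr:RAP}, one sees that each trinomial $g_\iota=g_{0,1,\iota}$ is a $\CC$-linear combination of $T_0^{l_0}$, $T_1^{l_1}$, $T_\iota^{l_\iota}$ with coefficients among the $2\times 2$ minors of $A$, all nonzero by the pairwise linear independence of the columns of $A$. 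Killing the column $\kappa$ leaves, for each $\iota$ in the relevant index range, a binomial of the stated shape: when $\kappa=0$ we kill $T_0^{l_0}$ and are left with a nonzero combination of $T_1^{l_1}$ and $T_\iota^{l_\iota}$ for $\iota=2,\ldots,r$; when $\kappa\ne 0$ we normalize so as to obtain $T_0^{l_0}+b_iT_i^{l_i}$ for $i\ne\kappa$. Dividing through by the (nonzero) leading coefficient produces the $b_i\in\CC^*$ claimed. One subtlety to spell out for the case $\kappa\ne 0,1$: the original relation $g_\iota$ for $\iota\ne\kappa$ already involves $T_0^{l_0}$ and $T_1^{l_1}$, so after the substitution we recover exactly $r-1$ binomials, and for $\iota=\kappa$ the relation $g_\kappa$ loses $T_\kappa^{l_\kappa}$ but retains $T_0^{l_0}+b T_1^{l_1}$, so no binomial is lost.

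Finally I would argue that $\mathcal{I}_{\kappa,0}$ is $K$-prime, equivalently that $V(\mathcal{I}_{\kappa,0})$ is the toric orbit closure $\overline{\TT_\kappa}$ described in Proposition~\ref{prop:fibers}(iii). This is where the argument needs care rather than bookkeeping: one checks that the binomial ideal generated by the listed relations is the lattice ideal associated to the sublattice of $\ZZ^{n+m}$ cut out by the exponent differences $l_0e_0-l_ie_i$ (or $l_1e_1-l_ie_i$), and that this sublattice is saturated. The quickest route is to observe that $\psi_\kappa\colon\mathcal{X}_\kappa\to\CC$ is a flat family with irreducible total space by Proposition~\ref{prop:fibers}(i), that $\mathcal{X}_{\kappa,0}=\psi_\kappa^{-1}(0)$ is irreducible and reduced being a closure of a $\TT^{s+1}$-orbit by Proposition~\ref{prop:fibers}(ii), and hence the reduced binomial ideal we have exhibited (which visibly contains $\mathcal{I}_{\kappa,0}$ and defines a set containing $\mathcal{X}_{\kappa,0}$) must coincide with $\mathcal{I}_{\kappa,0}$; $K$-primality then follows from irreducibility of $\mathcal{X}_{\kappa,0}$. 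The main obstacle is thus not the explicit substitution — that is routine — but verifying that the naive binomial equations already generate a \emph{prime} ideal rather than merely cutting out $\mathcal{X}_{\kappa,0}$ set-theoretically; invoking flatness and Proposition~\ref{prop:fibers} is what makes this clean, so I would foreground that dependence.
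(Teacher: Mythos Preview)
Your treatment of the inclusion $\mathcal{X}_{\kappa,0}\subseteq Z$ and of the explicit substitution $T_{\kappa n_\kappa+1}=0$ is correct and is exactly what the paper does (the paper just says ``clear by construction'' and ``using the specific nature of the defining trinomial relations'', referring to~\cite{HaHe}). Your case analysis for $\kappa\ne 0,1$ is a nice explicit check.

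The gap is in your $K$-primality argument. The paper obtains $K$-primality directly from \cite[Prop.~10.7]{HaHe}, which is an algebraic statement about exactly such binomial ideals. Your alternative route via Proposition~\ref{prop:fibers}(i),(ii) is not circular---those two items are proved in the paper without using the lemma---but the argument you sketch does not close. You write that ``the reduced binomial ideal we have exhibited \ldots\ visibly contains $\mathcal{I}_{\kappa,0}$''; the inclusion goes the other way: the substitution produces an ideal $J$ with $J\subseteq\mathcal{I}_{\kappa,0}$ and $V(J)\supseteq\mathcal{X}_{\kappa,0}$. To conclude $J=\mathcal{I}_{\kappa,0}$ you need $J$ to be radical, which you assert (``reduced'') but do not prove. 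Knowing that the set $\mathcal{X}_{\kappa,0}$ is an orbit closure tells you its \emph{vanishing} ideal is $K$-prime, but it does not tell you that the scheme-theoretic fibre ideal $J$ already equals that vanishing ideal; flatness of $\psi_\kappa$ does not force reducedness of the special fibre. Concretely, binomials such as $T_{01}^{2}+T_{21}^{2}$ factor over~$\CC$, so $J$ is in general not prime, and showing it is even radical (let alone $K$-prime) is precisely the content one must supply. That is what \cite[Prop.~10.7]{HaHe} does; if you want to avoid the citation, you should replace your last paragraph with a direct verification that the binomial ideal is $K$-prime (e.g.\ via the lattice-ideal description you mention, checking saturation explicitly), rather than appealing to properties of the fibre.
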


\begin{proof}
The first statement is clear by construction.
Moreover using the specific nature of the
defining trinomial relations $g_{\kappa, \iota}$
we obtain the shape of $\mathcal{I}_{\kappa,0}$,
see also the proof of~\cite[Prop.~10.7]{HaHe}.
Finally, $K$-primeness is ensured by~\cite[Prop.~10.7]{HaHe}.
\end{proof}

\begin{proof}[Proof of Proposition~\ref{prop:fibers}]
Assertions~(i) and~(vi) are clear by construction.
We prove~(ii). From Remarks~\ref{rem:CRoffamily}
and~\ref{rem:firstpropsfam} we infer
$\mathcal{X}_{\kappa,1} \cong X$. Using
equivariance of $\psi_\kappa$, we see
$\mathcal{X}_{\kappa,\zeta} \cong X$ for all
$\zeta \in \CC^*$. The fiber
$\mathcal{X}_{\kappa,0}$ is the intersection of
$\mathcal{X}_{\kappa}$ with the toric divisor of
$Z_\kappa$ given by the ray 
$\varrho_{\kappa n_{\kappa}+1} \in \Sigma_\kappa$.
In particular,
$\mathcal{X}_{\kappa,0}$ is the closure of a
$\TT^{s+1}$-orbit in $\mathcal{X}_{\kappa}$.

We prove~(iii). Look at the embedding
$\mathcal{X}_{\kappa,0} = \psi_\kappa^{-1}(0) \subseteq \Psi_\kappa^{-1}(0) = Z$.
The coordinate functions
$\chi_1, \ldots, \chi_r$ and
$\eta_1, \ldots, \eta_s$  on the acting torus
$\TT^{r+s} \subseteq Z$ satisfy
\[
p^*(\chi_i) \ = \ \frac{T_i^{l_i}}{T_0^{l_0}},
\quad
i = 1,\ldots, r,
\qquad
p^*(\eta_j) \ = \ S_j,
\quad
j = 1, \ldots, s.
\]
Using Lemma~\ref{lem:binomial}, we conclude that
$\mathcal{X}_{\kappa,0} \cap \TT^{r+s}$
is described by binomials of the form
$\chi_1 + b_i\chi_i$ if $\kappa = 0$ and
$1 + b_i\chi_i$ for $\kappa \ne 0$. Thus,
applying a suitable element
$t(\kappa) \in \TT^{r+s}$, we obtain
\begin{eqnarray*}
t(\kappa) \cdot (\mathcal{X}_{\kappa,0} \cap \TT^{r+s})
& = &
V(\chi_1-\chi_{i}; \ i = 2 , \ldots, r),
\\[.5ex]
t(\kappa) \cdot (\mathcal{X}_{\kappa,0} \cap \TT^{r+s})
& = &
V(\chi_i - 1; \ i = 1 , \ldots, r, \ i \ne \kappa),
\end{eqnarray*}
according to $\kappa = 0$ and $\kappa \ne 0$. For
every $\kappa = 0, \ldots, r$, this gives a
subtorus $\TT_\kappa \subseteq \TT^{r+s}$
corresponding to the sublattice
$N_\kappa \subseteq \ZZ^{r+s}$.
By construction, 
\[
\mathcal{X}_{\kappa,0}
\ = \
\overline{t(\kappa)^{-1}\cdot \TT_{\kappa}}
\ = \
t(\kappa)^{-1} \cdot \overline{\TT_\kappa}
\ \subseteq \
Z.
\]

We turn to~(iv). First we look at
$\bar{\mathcal{X}}_{\kappa,0} := V(\mathcal{I}_{\kappa,0}) \subseteq \CC^{n+m}$.
Given a sequence $J = (j_i; \ i = 0, \ldots, r, \ i \ne \kappa)$
with $1 \le j_i \le n_i$, set 
$$
D(J)
\ := \
V(T_{ij_i};  \ i = 0, \ldots, r, \ i \ne \kappa)
\ \subseteq \
\bar{\mathcal{X}}_{\kappa,0}.
$$
Each such $D(J)$ is isomorphic to $\CC^{n+m-r}$ and
$\bar{\mathcal{X}}_{\kappa,0} \cap V(T_{ij})$ is the
union over all those $D(J)$ with $j_i = j$.
The latter implies
$$
\bar{\mathcal{X}}_{\kappa,0} \setminus \TT^{n+m}
\ = \
\bigcup D(J)
\, \cup \,
\bigcup_{j=1}^{n_\kappa} V(T_{\kappa j}) \cap \bar{\mathcal{X}}_{\kappa,0}
\, \cup \,
\bigcup_{k=1}^m V(T_k) \cap \bar{\mathcal{X}}_{\kappa,0} .
$$
Now consider
$\hat{\mathcal{X}}_{\kappa,0} :=  \bar{\mathcal{X}}_{\kappa,0} \cap \hat Z$
and the quotient map
$p \colon \hat{\mathcal{X}}_{\kappa,0} \to \mathcal{X}_{\kappa,0}$.
Then $p$ respects the above decomposition which gives~(iv).

For~(v) note that
the Jacobian of the binomials from Lemma~\ref{lem:binomial}
is of rank at most $r-2$ along $D(J)$ if and only if $J$ admits
$i_1 \ne i_2$ with $l_{i_1j_{i_1}}, l_{i_2j_{i_2}} > 1$.
As a consequence, 
$\hat{\mathcal{X}}_{\kappa,0} = \bar{\mathcal{X}}_{\kappa,0} \cap \hat Z$
is normal if it intersects no such $D(J)$.
In this case, also
$\mathcal{X}_{\kappa,0} = p(\hat{\mathcal{X}}_{\kappa,0})$
is normal, proving the ``if'' part.
The ``only if'' part follows from
Proposition~\ref{lem:loch}.
\end{proof}

\begin{corollary}
If $X = X(A,P)$ admits a normal toric degeneration
$\mathcal{X}_{\kappa,0}$, then $X$ is quasismooth.
\end{corollary}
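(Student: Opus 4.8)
The plan is to read off quasismoothness of $X$ from the Jacobian of its defining relations and to match this with the combinatorial normality criterion of Proposition~\ref{prop:fibers}(v), the bridge being the structure of the cones of $\Sigma$ whose toric orbit meets $X$. Recall first that $X=X(A,P,\Sigma)$ is quasismooth if and only if $\hat X=\bar X\cap\hat Z$ is smooth, where $\bar X=V(g_\iota;\,\iota=2,\ldots,r)\subseteq\CC^{n+m}$ is a complete intersection; the variables $S_k$ do not occur in the relations, so I would simply ignore them. Each $g_\iota=g_{0,1,\iota}$ is a trinomial $c_0T_0^{l_0}+c_1T_1^{l_1}+c_\iota T_\iota^{l_\iota}$ with all $c_i\neq 0$, so the Jacobian $(\partial g_\iota/\partial T_{ij})$ has a block structure in which row $\iota$ is supported on the blocks $0$, $1$ and $\iota$ with block entries proportional to the \emph{arm gradients} $\nabla_i:=\grad(T_i^{l_i})$. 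A short rank computation --- the standard description of the singular locus of such trinomial complete intersections --- then shows that $\bar z\in\bar X$ is singular on $\bar X$ exactly when at least three of $\nabla_0(\bar z),\ldots,\nabla_r(\bar z)$ vanish, and that $\nabla_i(\bar z)=0$ holds precisely when either at least two of $T_{i1},\ldots,T_{in_i}$ vanish at $\bar z$, or exactly one of them, say $T_{ij}$, vanishes and $l_{ij}\geq 2$. In particular this condition depends only on the set of coordinates of $\bar z$ that vanish.

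Now assume that $\mathcal{X}_{\kappa,0}$ is normal and suppose, for a contradiction, that $X$ is not quasismooth. Then there is a point $\bar z\in\hat Z\cap\bar X$ at which at least three arm gradients vanish, say $\nabla_{i_1},\nabla_{i_2},\nabla_{i_3}$ with $i_1,i_2,i_3$ pairwise distinct. Since $\bar z\in\hat Z$, the cone $\sigma_0:=\cone(v;\ v$ a column of $P$ whose coordinate in $\bar z$ vanishes$)$ lies in a cone $\sigma\in\Sigma$ whose toric orbit meets $X$, and here I would invoke the combinatorial picture of Section~\ref{sec:tvarlowcpl} built on the tropical leaves $\tau_0,\ldots,\tau_r$ of Construction~\ref{constr:fankappa}: such a cone $\sigma$ either contains columns $v_{ij}$ from at most one of the blocks $i=0,\ldots,r$, or contains at most one column $v_{ij}$ from every block. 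In the first case $\sigma_0$ can make at most one arm gradient vanish, which is impossible; so we are in the second case. Then for each $t$ the block $i_t$ contributes exactly one column $v_{i_tj_{i_t}}$ to $\sigma_0$, and vanishing of $\nabla_{i_t}(\bar z)$ forces $l_{i_tj_{i_t}}\geq 2$. As $\sigma_0\preceq\sigma$ and $\sigma$ contains one column from every block, I can extend $(j_{i_t})$ to a tuple $J=(j_i;\ i=0,\ldots,r,\ i\neq\kappa)$ with $\sigma_J=\cone(v_{ij_i};\ i\neq\kappa)\in\Sigma$; since at least two of $i_1,i_2,i_3$ differ from $\kappa$, the cone $\sigma_J$ has $l_{ij_i}>1$ for at least two indices $i\neq\kappa$. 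This contradicts Proposition~\ref{prop:fibers}(v), so $X$ is quasismooth.

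I expect the main obstacle to be exactly this bridging step: one must be sure that a point $\bar z$ witnessing non-quasismoothness necessarily sits over an ``elliptic type'' cone of $\Sigma$ --- one with a single ray from each block --- so that the face $\sigma_0$ can be completed to a cone $\sigma_J$ of precisely the shape appearing in Proposition~\ref{prop:fibers}(v) and carrying two distinct bad exponents. This rests on the dichotomy for the cones of $\Sigma$ meeting $X$ (supported on one tropical leaf versus crossing all of them), which is the combinatorial input recalled in Section~\ref{sec:tvarlowcpl}; once it is in place, the Jacobian computation underlying the smoothness criterion is routine, as is the identification of $\hat Z$ with the complement of the non-faces.
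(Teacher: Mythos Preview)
Your argument is correct in outline and amounts to unpacking in situ what the paper obtains by a one-line citation: the paper simply invokes \cite[Prop.~7.14]{HaHiWr}, which asserts that the combinatorial condition in Proposition~\ref{prop:fibers}(v) forces quasismoothness, whereas you reprove this implication by the Jacobian analysis of the trinomials together with the structure of the relevant faces. The Jacobian step (singularity of $\bar X$ at $\bar z$ is equivalent to at least three arm gradients vanishing) is exactly right, and the reduction to a tuple $J$ with two exponents $>1$ among $i\ne\kappa$ is the correct target.

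One point deserves sharpening. The dichotomy you invoke is really a statement about the \emph{orthant face} $\gamma_0\preceq\QQ_{\ge 0}^{n+m}$ determined by the vanishing coordinates of $\bar z\in\bar X$: from the trinomial relations one gets only that $\gamma_0$ either meets at most one block or meets \emph{every} block, not that in the latter case it meets each block in a single $e_{ij}$. Your assertion ``$\sigma$ contains at most one column $v_{ij}$ from every block'' is the stronger statement, and it is precisely this refinement---that a big $\bar X$-relevant face sitting in $\hat\Sigma$ picks out a unique $j_i$ in each block---that carries the weight of \cite[Prop.~7.14]{HaHiWr}. It is not established in Section~\ref{sec:tvarlowcpl} of the present paper; so your direct route does not quite avoid the external input, it only relocates it. If one grants that structural fact (as the paper does by citation), your argument goes through; without it, the step ``vanishing of $\nabla_{i_t}(\bar z)$ forces $l_{i_tj_{i_t}}\ge 2$'' can fail when $\gamma_0$ contains two rays from block~$i_t$, and the bridge to a bad $\sigma_J$ is not yet closed.
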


\begin{proof}
According to~\cite[Prop.~7.14]{HaHiWr},
the characterizing property of normality from 
Proposition~\ref{prop:fibers}~(iii)
implies quasismoothness.
\end{proof}

\begin{example}
\label{example:lbsp5}
Consider again the surface $X = X(A,P,\Sigma)$
from Examples~\ref{example:lbsp1}
and~\ref{example:lbsp2}.
The matrices $P_\kappa$, where $\kappa = 0,1,2$,
from Construction~\ref{constr:deformdata}
are given as

{\tiny
\setlength{\arraycolsep}{2pt}
$$
P_0
 = 
\left[
\begin{array}{rrrrrr}
-2 & -1 & - \ell & 1 & 1 & 0 
\\
-2 & -1 & - \ell & 0 & 0 & 2
\\
3  & -1 &  0 & 0 & -1 & 1
\\
0 & 0 & 1 & 0 & 0 & 0 
\end{array}
\right],
\qquad
P_1
\ = \
\left[
\begin{array}{rrrrrr}
-2 & -1 & 1 & 1 &  \ell & 0
\\
-2 & -1 & 0 & 0 & 0 & 2
\\
3  & -1 & 0 & -1 & 0 & 1
\\
0 & 0 & 0 & 0 & 1 & 0 
\end{array}
\right],
\qquad
P_2
 = 
\left[
\begin{array}{rrrrrr}
-2 & -1 & 1 & 1 & 0 & 0
\\
-2 & -1 & 0 & 0 & 2 & \ell
\\
3  & -1 & 0 & -1 & 1 & 0
\\
0 & 0 & 0 & 0 & 0 & 1
\end{array}
\right].
$$
}

\noindent
In order to obtain proper flat families $\mathcal{X}_{\kappa}$,
we replace $Z$ with its projective closures from
Example~\ref{example:lbsp3}. This gives us the
embeddings
$$
X \ \subseteq \ Z_{w_{02} + w_{21}},
\qquad
X \ \subseteq \ Z_{w_{21}},
\qquad
X \ \subseteq \ Z_{w_{21} + w_{01}}.
$$
In either case, Lemma~\ref{lem:binomial}
tells us that the fibers
$\mathcal{X}_{\kappa,0} = \psi_\kappa^{-1}(0) \subseteq \Psi_\kappa^{-1}(0) = Z$
are given in Cox coordinates by binomials
as follows:
$$
\mathcal{X}_{0,0}  =  V(T_{11}T_{12} + T_{12}^2),
\quad
\mathcal{X}_{1,0} = V(T_{01}^2T_{02} + T_{12}^2),
\quad
\mathcal{X}_{2,0} = V(T_{01}^2T_{02} + T_{11}T_{12}).
$$
Moreover, Proposition~\ref{prop:fibers} says that $\mathcal{X}_{0,0}$
and $\mathcal{X}_{2,0}$ are normal, whereas
$\mathcal{X}_{1,0}$ is not.
Here are the generator matrices of the fans $\Delta^{\mathrm{at}}_\kappa$
of the special fibers:

\bigskip

\begin{center}
\begin{tabular}{c|c|c|c}
&
$\kappa = 0$
&
$\kappa = 1$
&
$\kappa = 2$
\\[2pt] \hline &&& \\[-8pt]
$w_{02} + w_{21}$
&
{\tiny
\setlength{\arraycolsep}{2pt}
$   
\left[\begin{array}{rrrr}
-1 & -1 & 1 & 3 
\\
-1 &  2 & 2 & -2   
\end{array}\right]$
}
&
{\tiny
\setlength{\arraycolsep}{2pt}
$\left[\begin{array}{rrrr}
-1 & -1 & 0 & 2
\\
-1 &  2 & -1 & 1   
\end{array}\right]$
}
&
{\tiny
\setlength{\arraycolsep}{2pt}
$\left[\begin{array}{rrrr}
-2 & 1 & 1 & 3 
\\
-1 &  -2 & 2 & 2   
\end{array}\right]$
}
\\[8pt] \hline &&& \\[-8pt]
$w_{21}$
&
{\tiny
\setlength{\arraycolsep}{2pt}
$   
\left[\begin{array}{rrrr}
-1 & -1 & 1 & 3 
\\
-1 &  2 & 2 & -2   
\end{array}\right]$
}
&
{\tiny
\setlength{\arraycolsep}{2pt}
$\left[\begin{array}{rrrr}
-1 & -1 & 0 & 2 
\\
-1 &  2 & -1 & 1   
\end{array}\right]$
}
&
{\tiny
\setlength{\arraycolsep}{2pt}
$\left[\begin{array}{rrr}
-2 & 1 & 3 
\\
1 & -2 & 2   
\end{array}\right]$
}
\\[8pt] \hline &&& \\[-8pt]
$w_{21}+w_{01}$
&
{\tiny
\setlength{\arraycolsep}{2pt}
$   
\left[\begin{array}{rrrr}
-1 & -1 & 1 & 3 
\\
-1 &  2 & 2 & -2   
\end{array}\right]$
}
&
{\tiny
\setlength{\arraycolsep}{2pt}
$\left[\begin{array}{rrrr}
-1 & -1 & 0 & 2
\\
-1 &  2 & -1 & 1   
\end{array}\right]$
}
&
{\tiny
\setlength{\arraycolsep}{2pt}
$\left[\begin{array}{rrrr}
-2 & -1 & 1 & 3 
\\
1 &  1 & -2 & 2   
\end{array}\right]$
}
\end{tabular}
\end{center}

\bigskip

\end{example}

\begin{remark}
\label{rem:antitropical-oneparameter}
Consider
$\psi_\kappa \colon \mathcal{X}_{\kappa} \to \CC$.
Let $\lambda \colon \CC^* \to \TT^{s+1}$ be a one
parameter subgroup acting on $\mathcal{X}_\kappa$
such that
$\psi_\kappa(\lambda(t) \cdot z) = t \psi_\kappa(z)$.
\begin{enumerate}
\item
On $\Psi_\kappa^{-1}(0) = Z$, the one parameter
subgroup $\lambda$ is given by
$v - \nu_\kappa \in \ZZ^{r+s}$ with
a unique $v \in \{0\} \times \ZZ^s$.
\item
On
$\mathcal{X}_{\kappa,0} = \psi_\kappa^{-1}(0)$,
the one parameter subgroup $\lambda$ is given in
antitropical coordinates by
$(v,\ell) \in \mathcal{H}_\kappa$. 
\end{enumerate}
Conversely, every
$(v,\ell) \in \ZZ^{s+1}$
is the antitropical coordinate vector of a one
parameter subgroup $\lambda \colon \CC^* \to \TT^{s+1}$
acting on $\mathcal{X}_\kappa$ such that 
$\psi_\kappa(\lambda(t) \cdot z) = t \psi_\kappa(z)$.
\end{remark}


\section{Equivariant test configurations}
\label{sec:testconf}

The toric degenerations discussed in the preceding
section deliver indeed equivariant test
configurations.
We show that we obtain in fact all equivariant
test configurations for a semiprojective variety
in this way and we relate the equivariant test
configurations of a Fano variety with
torus action of complexity one to those of its
anticanonical cone.
We deal with \emph{polarized $T$-varieties}
$(X,L)$, that means that $X$ is a normal variety
endowed with an effective torus action
$T \times X \to X$ and an ample 
divisor $L$ coming with a $T$-linearization
on $\mathcal{O}(L)$.

\begin{remark}
\label{rem:fanopolar}
For an $n$-dimensional Fano $T$-variety $X$,
we have natural polarizations:
Take any $T$-invariant anticanonical 
divisor $L$ on $X$ and endow
$\mathcal{O}(L)$ with the unique
$T$-linearization such that
$\mathcal{O}(n \iota_X L)$ 
is isomorphic as a $T$-sheaf to the 
$\iota_X$-fold tensor power of the 
$n$-fold exterior power of the 
$T$-equivariant tangent sheaf of~$X$.
\end{remark}

\begin{definition}
\label{def:testconfig}
An \emph{equivariant test configuration}
for a polarized $T$-variety $(X,L)$ is
a triple $(\mathcal{X},\mathcal{L},\psi)$,
where
\begin{enumerate}
\item 
$(\mathcal{X},\mathcal{L})$
is a polarized $\mathcal{T}$-variety
with $\mathcal{T} = T \times \CC^*$,
\item
$\psi \colon \mathcal{X} \to \CC$
is a flat, 
$T$-invariant, $\CC^*$-equivariant
morphism,
\item
the fibre $\mathcal{X}_1 = \psi^{-1}(1)$ is $T$-equivariantly
isomorphic to~$X$,
\item
the line bundle $\mathcal{L}$ restricts to $L$ on
$\mathcal{X}_1$.   
\end{enumerate}
An equivariant test configuration
$(\mathcal{X},\mathcal{L},\psi)$ for $(X,L)$
is called \emph{special} if the
fibre $\mathcal{X}_0 = \psi^{-1}(0)$ is a normal variety.
\end{definition}

For $T$-varieties of complexity one, the 
toric degenerations $\psi_\kappa \colon \mathcal{X}_\kappa \to \CC$
of $X$ discussed in Section~\ref{sec:constr-tordeg}
provide us with examples of equivariant test configurations.

\begin{proposition}
\label{prop:testconf-APS}
Let $X \subseteq Z$ arise from $(A,P,\Sigma)$
with a semipolytopal fan~$\Sigma$ via
Construction~\ref{constr:RAPdown}
and let $L$ be an ample toric divisor
on $Z$.
Moreover, let $\mathcal{X}_\kappa \subseteq Z_\kappa$
arise from $(A,P_\kappa,\Sigma_\kappa)$
via Construction~\ref{constr:deformdata}.
\begin{enumerate}
\item
Restricing the divisor $L$ to $X$
gives a polarized $\TT^s$-variety $(X,L)$.
Moreover, $L$ naturally extends
to an ample toric divisor $L_\kappa$ on
$Z_\kappa \cong Z \times \CC$.
\item
Restricting the toric divisor
$L_\kappa$ from~(i) to $\mathcal{X}_\kappa$
yields an equivariant test configuration
$(\mathcal{X}_\kappa,\mathcal{L}_\kappa,\psi_\kappa)$
for $(X,L)$.
\item
$(\mathcal{X}_\kappa,\mathcal{L}_\kappa,\psi_\kappa)$
from~(ii) 
is special if and only if for any
$\sigma  = \cone(v_{ij_i}; \, i \ne \kappa)$ in $\Sigma$,
we have $l_{j_i} > 1$ for at most one $0 \le i \le r$
distinct from $\kappa$. 
\item
Suppose that $X \subseteq Z$ is Fano,
$(\mathcal{X}_\kappa,\mathcal{L}_\kappa,\psi_\kappa)$
is special and, with
$0 \le \mu \le r$ such that $l_{ij} = 1$
for all $i \ne \mu,\kappa$, the divisor 
$L$ is given as
$$
\qquad\qquad
L
\ = \
\sum D_{ij} + \sum D_k - \sum_{i \ne \kappa,\mu} D_{ij}
\ = \
\sum_{j = 1}^{n_\kappa} D_{\kappa j} + \sum_{j = 1}^{n_\mu} D_{\mu j}
+ \sum D_k.
$$
Then $L$ restricts to an anticanonical divisor on
$X \subseteq Z$ and $\mathcal{L}_\kappa$ restricts
to the toric boundary divisor of $\mathcal{X}_0$;
in particular, $\mathcal{X}_0$ is Fano.
\end{enumerate}
\end{proposition}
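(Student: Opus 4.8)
The plan is to obtain (i)--(iii) by assembling the properties of the family $\psi_\kappa$ established in the previous section, and to reduce (iv) to a computation in $\Cl(X) = K$ followed by adjunction.

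For (i) and (ii): restricting the ample toric divisor $L$ on $Z$ to the closed subvariety $X \subseteq Z$ gives an ample divisor, and the toric linearization of $\mathcal{O}_Z(L)$ restricts to a $\TT^s$-linearization, so $(X,L)$ is a polarized $\TT^s$-variety. Using the toric isomorphism $\varphi_\kappa \colon Z_\kappa \xrightarrow{\sim} Z \times \CC$ of Remark~\ref{rem:ambientfamily}, set $L_\kappa := \varphi_\kappa^{*}\pr_Z^{*}L$; concretely $L_\kappa$ is the toric divisor on $Z_\kappa$ carrying over the coefficients of $L$ and assigning coefficient zero to the toric divisor of the new column, and it is ample with its canonical $\TT^{s+1}$-linearization. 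I would then verify the axioms of Definition~\ref{def:testconfig} for $\mathcal{L}_\kappa := L_\kappa|_{\mathcal{X}_\kappa}$: the pair $(\mathcal{X}_\kappa,\mathcal{L}_\kappa)$ is a polarized $\TT^{s+1}$-variety since $\mathcal{X}_\kappa$ is normal (Proposition~\ref{prop:fibers}(i)) and $\mathcal{L}_\kappa$ is ample; $\psi_\kappa$ is flat by Proposition~\ref{prop:fibers}(i), and the relation $\psi_\kappa(t \cdot z) = t_{s+1}\psi_\kappa(z)$ of Remark~\ref{rem:firstpropsfam} makes it $\TT^s$-invariant and $\CC^*$-equivariant; the fibre over $1$ is $\TT^s$-equivariantly isomorphic to $X$ by Proposition~\ref{prop:fibers}(ii); and $\mathcal{L}_\kappa$ restricts to $L$ there because $\varphi_\kappa$ identifies that fibre with $Z \times \{1\} \cong Z$ compatibly with $L_\kappa$ and $L$.

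Part (iii) is immediate: by definition the configuration is special exactly when $\mathcal{X}_0 = \psi_\kappa^{-1}(0) = \mathcal{X}_{\kappa,0}$ is normal, and Proposition~\ref{prop:fibers}(v) says this holds iff for every cone $\sigma_J = \cone(v_{ij_i};\, i \ne \kappa) \in \Sigma$ at most one index $i \ne \kappa$ has $l_{j_i} > 1$, which is the asserted criterion. For (iv), write $\mu_0 := \sum_j l_{0j}w_{0j} = \dots = \sum_j l_{rj}w_{rj} \in K$ for the class appearing in Remark~\ref{rem:antican} (distinct from the integer $\mu$ above). For each of the $r-1$ indices $i \notin \{\kappa,\mu\}$ the hypothesis $l_{ij} = 1$ gives $\sum_j w_{ij} = \mu_0$, so the two displayed forms of $L$ coincide and
\[
[L] \ = \ \sum w_{ij} + \sum w_k - \sum_{i \ne \kappa,\mu}\sum_j w_{ij}
\ = \ (1-r)\mu_0 + \sum w_{ij} + \sum w_k
\ = \ [-\mathcal{K}_X]
\]
by Remark~\ref{rem:antican}; hence $L$ restricts to an anticanonical divisor on $X$.

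For the remaining claim, note that $\Cl(\mathcal{X}_\kappa) = K$ with the new variable $T_{\kappa n_\kappa+1}$ of $K$-degree zero (Remark~\ref{rem:CRoffamily}), so Remark~\ref{rem:antican} applied to $\mathcal{X}_\kappa$ gives $[-\mathcal{K}_{\mathcal{X}_\kappa}] = (1-r)\mu_0 + \sum w_{ij} + \sum w_k = [L] = [\mathcal{L}_\kappa]$, i.e.\ $\mathcal{L}_\kappa$ is an anticanonical divisor on $\mathcal{X}_\kappa$. Since the fibre $\mathcal{X}_{\kappa,0} = \mathrm{div}_{\mathcal{X}_\kappa}(\psi_\kappa^{*}z)$ (with $z$ the coordinate on $\CC$) is a principal, hence Cartier, divisor on the Cohen--Macaulay variety $\mathcal{X}_\kappa$ and is normal by specialness, adjunction gives $-\mathcal{K}_{\mathcal{X}_{\kappa,0}} \sim (-\mathcal{K}_{\mathcal{X}_\kappa} - \mathcal{X}_{\kappa,0})|_{\mathcal{X}_{\kappa,0}} \sim \mathcal{L}_\kappa|_{\mathcal{X}_{\kappa,0}}$, using $\mathcal{O}_{\mathcal{X}_\kappa}(\mathcal{X}_{\kappa,0}) \cong \mathcal{O}_{\mathcal{X}_\kappa}$. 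As $\mathcal{X}_{\kappa,0}$ is a normal toric variety (Proposition~\ref{prop:fibers}(iii)), its anticanonical divisor is the sum of the toric boundary divisors exhibited in Proposition~\ref{prop:fibers}(iv); thus $\mathcal{L}_\kappa$ restricts to the toric boundary divisor of $\mathcal{X}_0$, and being ample it makes $\mathcal{X}_0$ Fano. The only work beyond bookkeeping lies in the class computation and this adjunction step; I expect recognising the special fibre as a normal Cartier divisor on a Cohen--Macaulay total space to be the most delicate point, but both ingredients are guaranteed by specialness and the complete-intersection description of $\mathcal{X}_\kappa$, so no serious obstacle is anticipated.
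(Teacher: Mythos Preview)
Your treatment of (i)--(iii) is correct and matches the paper: you have simply unpacked what the paper compresses into ``clear by construction'' and ``follows from Proposition~\ref{prop:fibers}(v)''.

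For (iv) your route differs from the paper's. The paper argues directly via Proposition~\ref{prop:fibers}(iv): that result explicitly lists the toric boundary prime divisors of $\mathcal{X}_{\kappa,0}$ as the $D(\sigma_J)$, the $D_{\kappa j}\cap\mathcal{X}_{\kappa,0}$ and the $D_k\cap\mathcal{X}_{\kappa,0}$, and one checks by inspection that the chosen $L_\kappa$ cuts out exactly this sum on $\mathcal{X}_{\kappa,0}$. You instead compute $[\mathcal{L}_\kappa]=[-\mathcal{K}_{\mathcal{X}_\kappa}]$ in $K$ and apply adjunction along the principal Cartier divisor $\mathcal{X}_{\kappa,0}\subseteq\mathcal{X}_\kappa$. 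This is a legitimate alternative and your class computation is correct (the new variable has degree zero and the extra $\ell\cdot 0$ term leaves $\mu_0$ unchanged). One small point: adjunction only yields $\mathcal{L}_\kappa|_{\mathcal{X}_{\kappa,0}}\sim -\mathcal{K}_{\mathcal{X}_{\kappa,0}}$, i.e.\ linear equivalence to the toric boundary, not literal equality of divisors as asserted in the statement; the paper's direct approach via Proposition~\ref{prop:fibers}(iv) gives the equality on the nose, which is what is used later in the proof of Proposition~\ref{prop:conemeetsdeg}. Your argument is nonetheless sufficient for the Fano conclusion, and the Cohen--Macaulay hypothesis you invoke is not actually needed: adjunction for a normal Cartier divisor on a normal variety suffices.
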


\begin{proof}
The first two statements are clear by construction.
Assertion~(iii) follows from
Proposition~\ref{prop:fibers}~(v).
Moreover, the latter shows that we can
choose $\mu$ and $L$ as in Assertion~(iv)
and Proposition~\ref{prop:fibers}~(iv)
shows that $L$ restricts as wanted.
\end{proof}

For semiprojective $T$-varieties of complexity one
with only constant invariant functions,
the above proposition yields all possible
equivariant test configurations except the
\emph{product configurations} that means the 
$\mathcal{X} \cong X \times \CC$ with $\psi$
being the projection onto $\CC$.

\begin{proposition}
\label{prop:all-test-conf}
Let $X$ be a rational semiprojective $T$-variety $X$
of complexity one with only constant invariant functions
and let $(\mathcal{X},\mathcal{L}_\kappa,\psi)$
a non-product equivariant test configuration.
Then there is an equivariant isomorphism $X \cong X(A,P,\Sigma)$
such that $\mathcal{X}_0 \cong \mathcal{X}_{\kappa,0}$
holds for some $0 \le \kappa \le r$.
\end{proposition}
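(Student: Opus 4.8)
The plan is to reconstruct the given test configuration $(\mathcal{X},\mathcal{L}_\kappa,\psi)$ from combinatorial data in two stages: first understand the total space $\mathcal{X}$ together with its $\mathcal{T} = T \times \CC^*$-action, then identify $\psi$ with one of the $\psi_\kappa$. Since $\mathcal{X}$ carries the $(s+1)$-dimensional torus action of $\mathcal{T}$ and is rational and semiprojective, and since $\psi$ being non-product forces the generic fibre $X$ to be of genuine complexity one (not toric, i.e.\ $r \ge 2$ in a minimal presentation), the total space $\mathcal{X}$ is again a rational semiprojective variety with torus action of complexity one and only constant invariant functions. By the structure theorem quoted after Construction~\ref{constr:RAPdown}, we get an equivariant isomorphism $\mathcal{X} \cong X(\tilde A, \tilde P, \tilde \Sigma)$ for suitable data. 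The first key step is to extract from the morphism $\psi$ the extra one-parameter subgroup direction: $\psi$ is $\CC^*$-equivariant for the second factor of $\mathcal{T}$ and $T$-invariant, so in the notation of Construction~\ref{constr:RAPdown} the function $\psi$ corresponds, up to a $\mathcal{T}$-semiinvariant unit, to a monomial in the Cox coordinates of $\mathcal{X}$ that is invariant under $T$ and has $\CC^*$-weight one; this pins down $\psi$ as the pullback of a single Cox variable.

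The second key step is to show that this distinguished Cox variable must be one of the "new" variables $T_{\kappa n_\kappa + 1}$ appearing in Construction~\ref{constr:deformdata}, i.e.\ a variable sitting in one of the critical fibres of the quotient map $\pi_{\mathcal{X}} \colon \mathcal{X} \dashrightarrow \PP_1$ of $\mathcal{X}$ rather than in a free $S_k$-variable or an "old" $T_{ij}$. Here one argues as follows: the special fibre $\psi^{-1}(0)$ is a prime divisor in $\mathcal{X}$, it is $T$-invariant and hence maps to a point $\alpha \in \PP_1$ under the rational quotient; the Cox variable defining it therefore belongs to the critical fibre $\overline{\pi_{\mathcal{X}}^{-1}(\alpha)}$. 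Choosing the numbering so that $\alpha = \alpha_\kappa$, this variable is one of the $T_{\kappa j}$. A further normalisation — using that $\psi$ is flat with reduced generic fibre isomorphic to $X$, so the class of $\psi^{-1}(0)$ in $\Cl(\mathcal{X})$ is trivial and the exponent pattern of the defining trinomials of $\mathcal{R}(\mathcal{X})$ must match the shape $T_\kappa^{l_\kappa} T_{\kappa n_\kappa+1}^{\ell}$ of Remark~\ref{rem:CRoffamily} — shows that deleting this variable and the bottom row of $\tilde P$ recovers exactly a presentation $(A, P, \Sigma)$ of the generic fibre $X$, and that $(\tilde A, \tilde P, \tilde \Sigma)$ coincides with $(A, P_\kappa, \Sigma_\kappa)$. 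Then $\mathcal{X}_0 = \psi^{-1}(0) \cong \mathcal{X}_{\kappa,0}$ by Lemma~\ref{lem:binomial} and Proposition~\ref{prop:fibers}.

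The main obstacle I expect is the rigidity argument in the second step: ruling out the possibility that the distinguished Cox variable is a source-type free variable $S_k$, and more importantly showing that the $K$-grading (divisor class group) and the trinomial relations of $\mathcal{R}(\mathcal{X})$ are forced into precisely the form produced by Construction~\ref{constr:deformdata} rather than some a priori more general complexity-one Cox ring with a degree-zero variable. This is where one must use genuinely that the generic fibre is $X$ itself (not merely some complexity-one variety) together with flatness — flatness of $\psi$ guarantees that $\mathcal{R}(\mathcal{X})$ is a free module over $\CC[T_{\kappa n_\kappa+1}]$ and that specialising $T_{\kappa n_\kappa+1} \mapsto 1$ yields $\mathcal{R}(X)$, which lets one read off the exponent $\ell$ and the slot $(\kappa, n_\kappa+1)$ uniquely. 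Once the Cox-ring side is matched, the fan side follows since $\tilde\Sigma$ is determined by the ample cone of $\mathcal{L}_\kappa$, which restricts to $L$ on the generic fibre exactly as in Proposition~\ref{prop:testconf-APS}~(i). The product case is the degenerate instance where the distinguished variable has trivial exponent $\ell = 0$ in all relations, which is precisely what is excluded by the non-product hypothesis.
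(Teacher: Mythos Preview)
Your overall strategy is close to the paper's, but there is one genuine error in the reasoning and some resulting confusion about the case analysis.

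The error is in your second step: you write that $\psi^{-1}(0)$ ``is $T$-invariant and hence maps to a point $\alpha \in \PP_1$ under the rational quotient.'' This implication is false. The $\mathcal{T}$-invariant prime divisors of a complexity-one $\mathcal{T}$-variety come in two flavours: those mapping to a point of $\PP_1$ (the $D_{ij}$-type, lying in critical fibres of $\pi_{\mathcal{X}}$) and those mapping dominantly onto $\PP_1$ (the $D_k$-type, with infinite general $\mathcal{T}$-isotropy). Nothing you have said excludes the second possibility for $\mathcal{X}_0$. In the paper this is handled by an explicit case split: if $\mathcal{X}_0$ is the closure of a single $\mathcal{T}$-orbit it maps to a point and one obtains $\mathcal{P} = P_\kappa$; if not, $\mathcal{X}_0$ must be adjoined as a new $\mathcal{D}_{m+1}$, the resulting defining matrix is block-diagonal, and $\mathcal{X} \cong X \times \CC$ is a product configuration, excluded by hypothesis. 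So the ``obstacle'' you anticipate (ruling out the $S_k$-variable) is not a rigidity argument using flatness at all; it \emph{is} the product case, dispatched in one line. Your closing remark identifying the product case with ``$\ell = 0$'' mischaracterises the same dichotomy.

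Apart from this, your plan is essentially the paper's argument run top-down rather than bottom-up. The paper does not invoke the structure theorem abstractly for $\mathcal{X}$; instead it pushes the invariant prime divisors $D_{ij}, D_k$ of $X$ into $\mathcal{X}$ via the trivialisation $\psi^{-1}(\CC^*) \cong X \times \CC^*$, takes closures $\mathcal{D}_{ij}, \mathcal{D}_k$, observes $\Cl(\mathcal{X}) \cong \Cl(X)$ because $\mathcal{X}_0$ is principal, and then reads the rows of the defining matrix $\mathcal{P}$ directly from the $\mathcal{T}$-isotropy orders and the class-group relations. This bypasses your worry about matching Cox rings via flatness: once one knows which critical fibre $\mathcal{X}_0$ lands in and that the remaining $\mathcal{D}_{ij}, \mathcal{D}_k$ inherit their isotropy and relations from $X$, the identification $\mathcal{P} = P_\kappa$ with $\kappa = 0$ (after renumbering) is immediate.
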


\begin{proof}
Let $D_{ij} \subseteq X$ denote the $T$-invariant prime
divisors forming the critical fibers
of the quotient map $\pi_X \colon X \dasharrow \PP_1$,
that means that for each of the critical values
$\alpha_0, \ldots, \alpha_r \in \PP_1$
of $\pi_X$ we have a dense inclusion
$$
\pi_X^{-1}(\alpha_i) \ \subseteq \ D_{i1} \cup \ldots \cup D_{n_i}.
$$
Moreover, let $D_k \subseteq X$ be the invariant prime
divisors with infinite general $T$-isotropy.
Identifying $X$ with the fibre
$\mathcal{X}_1 \subseteq \mathcal{X}$ of
$\psi \colon \mathcal{X} \to \CC$
provides us with a $(T \times \CC^*)$-equivariant
isomorphism 
$$
X \times \CC^* \ \to \ \psi^{-1}(\CC^*),
\qquad
(x,s) \ \mapsto s \cdot x,
$$
where the torus $T$ acts as given on the first
factor of $X \times \CC^*$ and $\CC^*$ on the
second one via multiplication.
Via this isomorphism, we see
$\Cl(\psi^{-1}(\CC^*)) \cong \Cl(X)$
and obtain prime divisors
$$
\CC^* \cdot D_{ij}
\ \subseteq \
\psi^{-1}(\CC^*),
\qquad
\CC^* \cdot D_{ij}
\ \subseteq \
\psi^{-1}(\CC^*),
$$
the classes of which generate $\Cl(\psi^{-1}(\CC^*))$.
Let
$\mathcal{D}_{ij},\mathcal{D}_k \subseteq \mathcal{X}$ 
denote the respective closures.
By definition
$\mathcal{X}_0 \subseteq \mathcal{X}$ 
is a principal prime divisor,
equal to $\mathcal{X} \setminus \psi^{-1}(\CC^*)$.
Consequently, we obtain
$$
\Cl(\mathcal{X}) \ \cong \ \Cl(X)
$$
and the l.h.s. group is generated by classes of
the $\mathcal{D}_{ij}$ and $\mathcal{D}_k$.
Now, set $\mathcal{T} := T \times \CC^*$
and regard $\mathcal{X}$ as a $\mathcal{T}$-variety.
We show that with a suitable defining matrix~$P$
for~$X$, a defining matrix $\mathcal{P}$ for~$\mathcal{X}$
can be choosen to be one of the 
following two:
$$
\mathcal{P}
\ = \
\left[
\begin{array}{cc}
v & P 
\\                   
1 & 0 
\end{array}
\right],
\quad
v = (-\ell, \ldots, -\ell,0, \ldots, 0),
\qquad\qquad
\mathcal{P}
\ = \
\left[
\begin{array}{cc}
P & 0 
\\                   
0 & 1 
\end{array}
\right].
$$

First we treat the case that $\mathcal{X}_0$ is the
closure of a $\mathcal{T}$-orbit.
Then $\mathcal{X}_0$ maps via $\mathcal{X} \dasharrow \PP_1$ to
the image point $\alpha \in \PP_1$ of some codimension
one orbit $\mathcal{T} \cdot x$ with
$x \in \mathcal{X}_1=X$;
recall for this that $X \dasharrow \PP_1$ maps
the set of codimension one $T$-orbits onto $\PP_1$.
Choose a defining matrix $P$ for $X$ such that
$T \cdot x \subseteq D_{01}$ holds.
Then the fibre of
$\pi_{\mathcal{X}} \colon \mathcal{X} \to \PP_1$
over $\alpha \in \PP_1$ is a dense subset of 
$$
\mathcal{X}_0 \cup \mathcal{D}_{01} \cup \ldots \cup \mathcal{D}_{0n_0}.
$$
The order of the general $\mathcal{T}$-isotropy
of $\mathcal{D}_{ij}$ equals the order $l_{ij}$
of the general $T$-isotropy of $D_{ij}$.
Thus, the $l_{ij}$ together with the order $\ell$
of the general $\mathcal{T}$-isotropy
of~$\mathcal{X}_0$ fill the first $r$
rows of $\mathcal{P}$ as claimed.
The rows $r+1$ to $r+s$ reflect the fact
that the classes of the
$\mathcal{D}_{ij}$ and $\mathcal{D}_k$
have the same relations in $\Cl(\mathcal{X})$
as the $D_{ij}$ and~$D_k$ in $\Cl(X)$
and the last row of $\mathcal{P}$ reflects
the fact that $\mathcal{X}_0$ is principal.

Now consider the case that the
$\mathcal{T}$-invariant prime divisor
$\mathcal{X}_0$ is not the closure of a single
$\mathcal{T}$-orbit. Then $\mathcal{X}_0$
must be added to the $\mathcal{D}_k$, say as
$\mathcal{D}_{m+1}$. This leads to a
matrix $\mathcal{P}$ of the second shape,
where, as before, the rows reflect
the relations in $\Cl(\mathcal{X}) \cong \Cl(X)$.
In the present case, the defining
matrix~$\mathcal{P}$ necessarily leads
to a product test configuration, which was
excluded by assumption.

So far, we determined defining matrices $P$
for $X$ and $\mathcal{P}$ for $\mathcal{X}$.
To proceed recall that the class
$[L] \in \Cl(X)$ lies in
the interior of the moving cone of any
projective toric variety $Z$ having $P$ as 
generator matrix.
Thus, there is a (unique) complete fan
$\Sigma$ such that $[L] \in \Cl(Z)$
becomes ample on its associated toric
variety~$Z$.
Now, renumbering the first columns of 
$P$ and $\mathcal{P}$ accordingly and
choosing a suitable $A$, we have the
defining data $(A,P,\Sigma)$ for $X$
and $(A,P_\kappa,\Sigma_\kappa)$
for $\mathcal{X}$, where $\kappa=0$ and
$P_\kappa = \mathcal{P}$.
The necessary linearized divisors
on $X(A,P,\Sigma) \cong X$
and $\mathcal{X}_\kappa \cong \mathcal{X}$
are transferred via the isomorphisms.
\end{proof}

We will not only have to deal with test configurations
of Fano varieties but also with test configurations
of their anticanonical cones.
The following construction and proposition provide
the necessary concepts and facts concerning the
interplay of both settings.

\begin{construction}
\label{constr:moment-polytope}
Consider a Fano variety $X \subseteq Z$
arising from $(A,P,\Sigma)$ with a
polytopal fan~$\Sigma$.
Fix $(\alpha_{ij},\alpha_k)$ such that
we obtain an anticanonical class
\[
\sum \alpha_{ij}w_{ij} + \sum \alpha_k w_k
\ \in \
K
\ = \
\Cl(X)
\ = \
\Cl(Z).
\]
For instance, $\alpha_{0j} = (1+l_{0j}-rl_{0j})$
and
$\alpha_{ij} = \alpha_k = 1$ for $i \ne 0$
will work.
As in Proposition~\ref{prop:affinecone},
set $s' := s+1$ and take the $r+s'$ by
$n+m$ stack matrix
\[
P'
\ = \
\left[
\begin{array}{c}
P
\\
\alpha
\end{array}
\right],
\qquad
\alpha
\ = \
(\alpha_{ij},\alpha_k).
\]
This gives defining data $(A,P',\Sigma')$
for an anticanonical cone $X'$ over $X$,
where~$\Sigma'$ hosts the faces of the
cone $\sigma' \subseteq \RR^{r+s'}$
over the columns of $P'$.
For $\kappa =  0, \ldots, r$ set
\[
\tau'_\kappa
\ := \
(\eta'_\kappa)^{-1} (\sigma')
\ \subseteq \ 
\RR^{s'+1},
\qquad\qquad
\omega'_\kappa 
\ := \
(\tau'_\kappa)^\vee
\ \subseteq \ 
\RR^{s'+1},
\]
where $\eta'_\kappa \colon \ZZ^{s'+1} \to N'_\kappa$
are the antitropical coordinates from Construction~\ref{constr:fankappa}.
With  $\imath \colon \RR^{s+1} \to \RR^{s'+1}$,
$u \mapsto (u_1,\ldots,u_s,1,u_{s+1})$,
we obtain for each $\kappa$ a polytope
\[
\mathcal{C}_\kappa
\ := \
\imath^{-1}(\omega'_\kappa)
\ \subseteq \
\RR^{s+1}.
\]
Observe that if $\kappa$ is special, then there is a unique
interior lattice point $u_\kappa \in \mathcal{C}_\kappa$.
We define the \emph{moment polytope} as
\[
\mathcal{B}_\kappa
\ := \
\begin{cases}
\mathcal{C}_\kappa - u_\kappa, & \kappa \text{ special},
\\
\mathcal{C}_\kappa , & \kappa \text{ not special}.
\end{cases}
\]
\end{construction}

\begin{proposition}
\label{prop:conemeetsdeg}
Consider a Fano variety $X \subseteq Z$ arising
from $(A,P,\Sigma)$ with anticanonical cone
$X' \subseteq Z'$ given by $(A,P',\Sigma')$
as in Construction~\ref{constr:moment-polytope}.
Taking
\[
L \ = \ \sum \alpha_{ij} D_{ij} + \sum \alpha_k D_k,
\qquad
L' \ = \ \sum \alpha_{ij} D'_{ij} + \sum \alpha_k D'_k
\]
with the $\alpha_{ij}$ and $\alpha_k$ used
Construction in~\ref{constr:moment-polytope}
provides us with test configurations
$(\mathcal{X}_\kappa,\mathcal{L}_\kappa,\psi_\kappa)$
for $X$ and
$(\mathcal{X}'_\kappa,\mathcal{L}'_\kappa,\psi'_\kappa)$
for $X'$ and we have the following.
\begin{enumerate}
\item
For every $\kappa = 0, \ldots, r$, the families 
$\mathcal{X}'_\kappa \to \CC$ and $\mathcal{X}_\kappa \to \CC$
fit into a commutative diagram
\[
\xymatrix{
{\mathcal{X}'_\kappa}
\ar@{-->}[rr]
\ar[dr]_{\psi'_\kappa}
& &
{\mathcal{X}_\kappa}
\ar[dl]^{\psi_\kappa}
\\
& {\CC} &
}
\]
where for each $\zeta \in \CC$ the induced map
$\mathcal{X}'_{\kappa,\zeta} \dasharrow \mathcal{X}_{\kappa,\zeta}$
of fibers is the quotient by the $\CC^*$-action on
$\mathcal{X}'_\kappa \subseteq Z'_\kappa$ 
given by $e_{r+s'} \in \ZZ^{r+s'+1}$. 
\item
For every $\kappa = 0, \ldots, r$ and every $\zeta \in \CC^*$,
the map
$X' \cong \mathcal{X}'_{\kappa,\zeta} \dasharrow \mathcal{X}_{\kappa,\zeta} \cong X$
of fibers is the anticanonical cone projection.
\item
For every $\kappa = 0, \ldots, r$, the algebra
of functions of the fiber $\mathcal{X}'_{\kappa,0}$ is
the monoid algebra $\CC[(\eta'_\kappa)^*(\omega' \cap \ZZ^{r+s'})]$,
where $\omega' = (\sigma')^\vee$.
\item
If $\kappa$ is special, then the map
$\mathcal{X}'_{\kappa,0} \dasharrow \mathcal{X}_{\kappa,0}$
of fibers is an anticanonical cone over the toric
Fano variety $\mathcal{X}_{\kappa,0}$.
\item
Let $\kappa$ be special, $v'_1,\ldots,v'_k$ the
primitive generators of $\tau'_\kappa$ and
$\tilde v_i \in \ZZ^{s'+1}$ arise from $v_i' \in \ZZ^{s'+1}$
by replacing the $s'$-coordinate with $1$. Then
\[
\tilde v_i \ = \ G_\kappa \cdot v'_i,
\qquad
i = 1, \ldots, k,
\]
with a unique unimodular matrix $G_\kappa \in \GL(s'+1,\ZZ)$.
This gives us in particular the mutually dual cones
\[
\qquad
\tau_\kappa
\ := \
G_\kappa \cdot \tau'_\kappa
\ \subseteq \
\RR^{s'+1},
\qquad
\omega_\kappa
\ := \
(G_\kappa^{-1})^* \cdot \omega'_\kappa
\ \subseteq \
\RR^{s'+1},
\]
where $\tau_\kappa$ has the primitive
generators $\tilde v_1,\ldots, \tilde v_k$ .
Let $\pr \colon \ZZ^{s'+1} \to \ZZ^{s+1}$ denote
the projection erasing the $s'$-coordinate.
Then 
\[
v_i
\ := \
\pr(v_i)
\ = \
\pr(\tilde v_i),
\qquad
i = 1, \ldots, k,
\]
are the primitive generators of
the fan $\Delta^{\mathrm{at}}_\kappa$
from Construction~\ref{constr:fankappa}.
Hence, the Fano polytope
of $\mathcal{X}_{\kappa,0}$ and its dual
are given as
\[
\mathcal{A}_\kappa
\ = \ 
\conv(v_1, \ldots, v_r),
\qquad \qquad 
\mathcal{A}_\kappa^*
\ = \
\mathcal{B}_\kappa .
\]
\end{enumerate}
\end{proposition}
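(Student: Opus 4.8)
\emph{Plan of proof.} Everything rests on one structural observation: on the level of defining matrices, forming the anticanonical cone over $X$ means stacking the anticanonical row $\alpha$ below $P$ (Proposition~\ref{prop:affinecone}), while forming the toric degeneration $\mathcal{X}_\kappa$ of $X$ means appending a zero row at the bottom and inserting the column $(\nu_\kappa,1)$ (Construction~\ref{constr:deformdata}); these two operations commute up to interchanging the last two rows of the resulting matrix, which is a unimodular change of the ambient lattice and hence a toric isomorphism preserving the coordinate $e_{r+s'}$. Spelling this out, applying Construction~\ref{constr:deformdata} to $(A,P',\Sigma')$ produces the matrix $\left[\begin{smallmatrix} P & \nu_\kappa\\ \alpha & 0\\ 0 & 1\end{smallmatrix}\right]$, which after swapping its last two rows is the stack matrix $\left[\begin{smallmatrix} P_\kappa\\ \alpha_\kappa\end{smallmatrix}\right]$ over $P_\kappa$, where $\alpha_\kappa\in\ZZ^{n+1+m}$ is $\alpha$ with a zero inserted at the slot of the new variable $T_{\kappa n_\kappa+1}$; the associated fans match the same way. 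By Remarks~\ref{rem:CRoffamily} and~\ref{rem:antican} the new variable has $K$-degree zero and $\mu$ is unchanged, so $\alpha_\kappa$ realises the class $[-\mathcal{K}_{\mathcal{X}_\kappa}]$, and by Proposition~\ref{prop:affinecone} (applied, in the semiprojective generality of Section~\ref{sec:tvarlowcpl}, to $\mathcal{X}_\kappa\subseteq Z_\kappa$ with $L_\kappa$) the variety $\mathcal{X}'_\kappa$ is the relative anticanonical cone over $\mathcal{X}_\kappa\to\CC$, with cone projection $\mathcal{X}'_\kappa\dashrightarrow\mathcal{X}_\kappa$ equal to the quotient by the $\CC^*$ through $e_{r+s'}$. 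Since coning leaves the last (degeneration) coordinate untouched, this projection is compatible with $\psi'_\kappa$ and $\psi_\kappa$; that is the commutative triangle of~(i), and restricting it over $\zeta\in\CC$ gives the fibrewise statement there.

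For~(ii), the equivariance of $\psi_\kappa$ and $\psi'_\kappa$ (Remark~\ref{rem:firstpropsfam}) gives isomorphisms $\mathcal{X}_{\kappa,\zeta}\cong X$ and $\mathcal{X}'_{\kappa,\zeta}\cong X'$ of Proposition~\ref{prop:fibers}(ii) that intertwine the $\CC^*$ through $e_{r+s'}$ with the cone scaling; hence the induced map on fibres over $\zeta\ne 0$ becomes the cone projection $X'\to X$ of Proposition~\ref{prop:affinecone}, which by the choice of $\alpha$ is the anticanonical cone projection. For~(iii), by Proposition~\ref{prop:fibers}(ii),(iii) the fibre $\mathcal{X}'_{\kappa,0}$ is the closure of a translate $t(\kappa)^{-1}\cdot\TT'_\kappa$ of a subtorus inside the \emph{affine} toric variety $Z'$, whose ring of functions is the monoid algebra $\CC[\omega'\cap\ZZ^{r+s'}]$ with $\omega'=(\sigma')^\vee$; restricting a character $\chi^m$ to $\TT'_\kappa$, parametrised by the isomorphism $\eta'_\kappa\colon\ZZ^{s'+1}\to N'_\kappa$, sends it to $\chi^{(\eta'_\kappa)^*(m)}$, and translating by $t(\kappa)^{-1}$ does not change the algebra, so $\Gamma(\mathcal{X}'_{\kappa,0},\mathcal{O})=\CC[(\eta'_\kappa)^*(\omega'\cap\ZZ^{r+s'})]$.

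Assertion~(iv) now follows by specialising the triangle of~(i) to $\zeta=0$: it exhibits $\mathcal{X}'_{\kappa,0}$ as an affine cone over $\mathcal{X}_{\kappa,0}$, and by Proposition~\ref{prop:testconf-APS}(iv) (or adjunction along the principal fibre $\mathcal{X}_{\kappa,0}$ together with Remark~\ref{rem:antican}) the polarisation is a Cartier divisor of class $[-\mathcal{K}_{\mathcal{X}_{\kappa,0}}]$, so $\mathcal{X}_{\kappa,0}$ is a \emph{Gorenstein} toric Fano variety and, the cone depending only on the polarising class, $\mathcal{X}'_{\kappa,0}$ is its anticanonical cone. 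Finally, for~(v) I would unwind the standard dictionary between a Gorenstein cone of index one and its reflexive polytope. Reading the coordinates of $\ZZ^{s'+1}$ as (torus directions of $\mathcal{X}$; the cone direction $e_{s'}$; the degeneration direction $e_{s'+1}$), the Gorenstein property of $\tau'_\kappa$ forced by~(iv) places the primitive generators $v'_i$ of $\tau'_\kappa$ on a lattice hyperplane whose defining functional has coefficient one in the cone direction; this is exactly what makes the passage from $v'_i$ to $\tilde v_i$ (setting the cone coordinate to one) the action of a unimodular matrix $G_\kappa$. Erasing the cone coordinate then identifies $\tilde v_i$ with $(v_i,1)\in N_\kappa\times\ZZ$, so the $v_i=\pr(v'_i)$ are the primitive generators of $\Delta^{\mathrm{at}}_\kappa$ from Construction~\ref{constr:fankappa}, and the reflexive-polytope/Gorenstein-cone correspondence (which one cross-checks against the monoid of~(iii)) yields $\mathcal{A}_\kappa=\conv(v_1,\dots,v_r)$ and, dualising, $\mathcal{A}_\kappa^*=\mathcal{C}_\kappa-u_\kappa=\mathcal{B}_\kappa$, the shift by the unique interior lattice point $u_\kappa$ being precisely the normalisation built into Construction~\ref{constr:moment-polytope}, which places that distinguished interior lattice point at the origin.

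\emph{Main obstacle.} The substantive work is concentrated in~(v): one must carry along three different lattices simultaneously (for $\mathcal{X}$, for $\mathcal{X}'$, and the two degeneration lattices), with their antitropical coordinatisations $\eta_\kappa$ and $\eta'_\kappa$, and then check that the two ways of reading off the Fano polytope of $\mathcal{X}_{\kappa,0}$ — directly via $\Delta^{\mathrm{at}}_\kappa$, and via $G_\kappa$, the projection $\pr$ and the dual cone $\omega'_\kappa$ — agree on the nose, including the precise placement of $u_\kappa$. The conceptual key that unlocks this, namely that specialness forces $\mathcal{X}_{\kappa,0}$ to be \emph{Gorenstein} Fano and hence $\tau'_\kappa$ to be a Gorenstein cone of index one whose Gorenstein functional does not involve the degeneration coordinate, is the step I would isolate and prove first; after that the remaining identities are a lengthy but routine coordinate computation.
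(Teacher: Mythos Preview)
Your argument is correct and tracks the paper's proof closely: parts~(i)--(iii) are handled identically (the commutation of ``stack $\alpha$'' with ``insert $(\nu_\kappa,1)$'' is exactly the paper's observation that $P_\kappa$ is obtained from $P'_\kappa$ by deleting the $s'$-th row), and for~(iv),~(v) both you and the paper reduce to the fact that the $s'$-th coordinates of the $v'_i$ and the all-ones vector are each coefficient vectors of toric anticanonical divisors on $\mathcal{X}_{\kappa,0}$, hence differ by an integral row operation --- you phrase this as ``$\tau'_\kappa$ is a Gorenstein cone whose Gorenstein functional has $e_{s'}$-coefficient one'', the paper as ``two linearly equivalent anticanonical divisors differ by a principal divisor'', and these are the same statement. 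One small caveat: your intermediate assertion that $\mathcal{X}_{\kappa,0}$ is a \emph{Gorenstein} toric Fano variety is an overclaim in the generality of the paper (the ambient log del Pezzo surfaces need not be Gorenstein, and neither need their toric degenerations); what is true and what you actually use is only that the \emph{cone} $\tau'_\kappa$ is Gorenstein of index one, which holds for the affine cone over any toric Fano with respect to any integral anticanonical divisor, Cartier or not.
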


\begin{proof}[Proof of Construction~\ref{constr:moment-polytope}
  and Proposition~\ref{prop:conemeetsdeg}]
By the nature of Construction~\ref{constr:deformdata},
the defining matrix
$P_\kappa$ of the family $\mathcal{X}_\kappa \to \CC$
is retrieved from the defining matrix 
$P'_\kappa$ of the family $\mathcal{X}'_\kappa \to \CC$
by removing the $s'$-th row.
This procedure reflects taking the quotient
by the $\CC^*$-action given by $e_{s'} \in \ZZ^{r+s'+1}$. 
Assertion~(i) follows.
Applying Proposition~\ref{prop:affinecone},
we obtain~(ii).
Assertion~(iii) is a consequence of
Proposition~\ref{prop:fibers}~(iii)
and the definition of the antitropical coordinates.

We proceed with the statements on the special~$\kappa$.
Note that $L'$ extends the pullback of $L$ with respect
to the cone projection $Z' \dasharrow Z$.
As a consequence,~$\mathcal{L}'_\kappa$ extends
the pullback of $\mathcal{L}_\kappa$ with respect to
the quotient map $Z_\kappa' \dasharrow Z_\kappa$.
With the identifications
$(\Psi'_\kappa)^{-1}(0) = Z'$ and $\Psi_\kappa^{-1}(0) = Z$
from Construction~\ref{constr:deformdata},
we obtain a commutative diagram of fibers
\[
\xymatrix{
{\mathcal{X}'_{\kappa,0}}
\ar@{}[r]|\subseteq
\ar@{-->}[d]
&
Z'
\ar@{-->}[d]
\\
{\mathcal{X}_{\kappa,0}}
\ar@{}[r]|\subseteq
&
Z
}  
\]
Proposition~\ref{prop:testconf-APS}~(iii)
guarantees that $L'$ and $L$ restrict to
toric anticanonical divisor on
and $\mathcal{X}'_{\kappa,0}$ and
$\mathcal{X}_{\kappa,0}$,
respectively.
This implies~(iv). We show~(v).
In antitropical coordinates,
$\mathcal{X}'_{\kappa,0}$ arises from
the cone $\tau'_\kappa$, generated
by the $v_i'$ and
$\mathcal{X}_{\kappa,0}$ arises from
the fan $\Delta_\kappa^{\mathrm{at}}$ in $\ZZ^{s+1}$,
having $v_i = \pr(v'_i)$ as its primitive
ray generators.
In this setting, the cone projection
$\mathcal{X}'_{\kappa,0} \dasharrow \mathcal{X}_{\kappa,0}$
is the quotient by the $\CC^*$-action corresponding
to $e_{s'} \in \ZZ^{s'+1}$.
In particular, (iv) together with
Proposition~\ref{prop:affinecone}
tell us that the $s'$-th row
$(v'_{1,s'}, \ldots, v'_{k,s'})$ of
the generator matrix $[v'_1, \ldots, v'_k]$
of $\tau'$ lists the coefficients of a
toric anticanonical divisor of
$\mathcal{X}_{\kappa,0}$.
Morover, also the $s'$-th row $[1,\ldots,1]$
of the generator matrix
$[\tilde v_1, \ldots, \tilde v_k]$
of $\tau_\kappa$ lists the coefficients of a
a toric anticanonical divisor on
$\mathcal{X}_{\kappa,0}$.
Again by Proposition~\ref{prop:affinecone},
we see that $\tau_\kappa$ defines an
anticanonical cone over $\mathcal{X}_{\kappa,0}$.
The two involved anticanonical divisors are
linearly equivalent and thus differ by 
an integral linear combination
over the rows of $[v_1,\ldots, v_k]$,
which reflect the relations 
among the toric prime divisors in
the divisor class group of $\mathcal{X}_{\kappa,0}$.
The desired matrix $G_\kappa$ arises from the
$s'+1$ by $s'+1$ unit matrix by replacing
the zeros of its $s'$-th row with the
coefficients of the above linear combination.
The statements concerning $\mathcal{A}_\kappa$ and
$\mathcal{B}_\kappa$ are now clear. They imply
in particular that $\mathcal{C}_\kappa$ has a
unique interior lattice point.
\end{proof}

\section{Testing polystability}
\label{sec:polystability}

We recall the necessary concepts and results
from equivariant $K$-polystability and the
general criteria for the existence of
K\"ahler-Einstein metrics, K\"ahler-Ricci solitons
and Sasaki-Einstein metrics.
Moreover, based on the preceding combinatorial
study of test configurations, we can
provide explicit versions of these 
criteria for del Pezzo $\CC^*$-surfaces
$X(A,P)$ in terms of the defining matrix $P$.

We begin with K\"ahler-Einstein metrics.
Let $(X,L)$ be a polarized Fano $T$-variety
as in Remark~\ref{rem:fanopolar}.
The section spaces of the multiples
of~$L$ decompose into eigenspaces:
$$
H^0(X,\mathcal{O}(kL))
\ = \
\bigoplus_{\mathbb{X}(T)} H^0(X,\mathcal{O}(kL))_\chi.
$$
We denote by $h_k$ and $h_{k,\chi}$ the dimensions
of the involved vector spaces.
The \emph{Futaki character} associated
with $(X,L)$ is 
$$
F_X
\ := \
- \lim_{k \to \infty}
\frac{1}{k h_k}
\sum_{\chi \in \Chi(T)}
h_{k,\chi} \chi
\ \in \
\mathbb{X}_\RR(T).
$$
For a special equivariant test configuration
$(\mathcal{X},\mathcal{L},\psi)$ for $(X,L)$,
let $\lambda \in \Lambda(\mathcal{T})$ be the
one parameter subgroup corresponding to
the $\CC^*$-part of $\mathcal{T} = T \times \CC^*$.
Then
$$
\mathrm{DF} (\mathcal{X},\mathcal{L},\psi)
\ := \
F_{\mathcal{X}_0}(\lambda)
$$
is the \emph{Donaldson-Futaki invariant} of
$(\mathcal{X},\mathcal{L},\psi)$.
Now, $(X,L)$ is \emph{equivariantly $K$-polystable} if
$\mathrm{DF} (\mathcal{X},\mathcal{L},\psi) \ge 0$ for every
special equivariant test configuration, with equality only for product test configuration.

\begin{theorem}
A Fano variety  with torus action of complexity $1$ admits
a K\"ahler-Einstein metric if and only if it is equivariantly
$K$-polystable.
\end{theorem}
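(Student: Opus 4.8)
The plan is to obtain the statement by assembling the Yau--Tian--Donaldson correspondence for Fano varieties with two standard reduction results and the combinatorial analysis of test configurations carried out in Sections~\ref{sec:constr-tordeg} and~\ref{sec:testconf}. Fix the polarization $L = -\mathcal{K}_X$ with the natural $T$-linearization from Remark~\ref{rem:fanopolar}. The three inputs are: (a) the solution of the Yau--Tian--Donaldson conjecture in the Fano case (Chen--Donaldson--Sun and Tian in the smooth case, Li--Tian--Wang and Berman--Boucksom--Jonsson for $\QQ$-Fano varieties), which says that $X$ admits a K\"ahler-Einstein metric if and only if $\mathrm{DF}(\mathcal{X},\mathcal{L},\psi) \ge 0$ for every normal test configuration of $(X,L)$, strictly unless the configuration is a product; (b) the reduction to \emph{special} test configurations of Li--Xu, allowing one to restrict attention to configurations whose central fibre is a normal $\QQ$-Fano variety, i.e.\ to the special test configurations of Definition~\ref{def:testconfig}; and (c) equivariant $K$-stability in the sense of Datar--Sz\'ekelyhidi, which permits the further restriction to $T$-equivariant test configurations once an effective torus action is present.

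Chaining (a), (b) and (c) gives: $X$ admits a K\"ahler-Einstein metric if and only if $\mathrm{DF}(\mathcal{X},\mathcal{L},\psi) \ge 0$ for every \emph{special} $T$-equivariant test configuration, with equality only for product configurations --- which is precisely the notion of equivariant $K$-polystability used above. To close the argument one checks that the quantity $F_{\mathcal{X}_0}(\lambda)$ entering that definition agrees with the classical Donaldson--Futaki invariant of $(\mathcal{X},\mathcal{L},\psi)$: for a special configuration the central fibre $\mathcal{X}_0$ is again Fano, the one parameter subgroup $\lambda$ acts on it through the $\CC^*$-factor of $\mathcal{T} = T \times \CC^*$, and the Futaki character of the Fano variety $\mathcal{X}_0$ evaluates on $\lambda$ to exactly this invariant (the same identification is made in~\cite[Sec.~4]{IlSu}). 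At this point Propositions~\ref{prop:testconf-APS} and~\ref{prop:all-test-conf} enter: they show that, up to product configurations, every special $T$-equivariant test configuration of $X \cong X(A,P,\Sigma)$ is one of the finitely many toric degenerations $\mathcal{X}_\kappa$, $0 \le \kappa \le r$, so the criterion becomes a finite, effective one --- which is what the later combinatorial theorems exploit.

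The step I expect to require the most care is the \emph{simultaneous} use of the special and the equivariant reductions: one has to know that $K$-polystability can be tested on configurations that are at once special and $T$-equivariant, and that the product/non-product alternative survives both reductions. This compatibility is available in the literature (the special reduction of Li--Xu is compatible with group actions, and the equivariant minimization of the Donaldson--Futaki invariant of Datar--Sz\'ekelyhidi applies directly), and it is really the only non-formal ingredient beyond quoting the Yau--Tian--Donaldson theorem; everything downstream is the bookkeeping already carried out in Sections~\ref{sec:constr-tordeg}--\ref{sec:testconf}.
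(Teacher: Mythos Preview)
Your overall logical shape is sound, but the route differs from the paper's and one of your citations does not cover the singular case you need.

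The paper argues as follows. For the implication ``equivariantly $K$-polystable $\Rightarrow$ K\"ahler--Einstein'', it first invokes Proposition~\ref{prop:all-test-conf} to conclude that $X$ has only finitely many special $T$-equivariant degenerations, and then applies \cite[Cor.~1.4]{Li} directly: Chi Li's result says precisely that for a $\QQ$-Fano variety with this finiteness property, equivariant $K$-polystability already forces the existence of a K\"ahler--Einstein metric. For the converse, the paper simply quotes Berman~\cite[Thm.~1.1]{Berman}: a K\"ahler--Einstein $\QQ$-Fano variety is $K$-polystable, hence a fortiori equivariantly $K$-polystable. That is the whole proof; the finiteness from Proposition~\ref{prop:all-test-conf} is not merely cosmetic but is the hypothesis that triggers \cite[Cor.~1.4]{Li}.

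Your assembly via (a) full YTD, (b) Li--Xu special reduction, and (c) equivariant reduction is a legitimate alternative, but step~(c) as you state it is not justified: Datar--Sz\'ekelyhidi is a result about \emph{smooth} Fano manifolds, proved via a continuity method, and does not apply to the singular log del Pezzo surfaces that are the paper's main objects. For $\QQ$-Fano varieties the equivariant reduction you need is due to Zhuang (optimal destabilization is $\Aut$-equivariant) or, equivalently, is contained in Li's $G$-uniform stability paper~\cite{Li}. Once you replace the Datar--Sz\'ekelyhidi citation by one of these, your argument goes through; but then you are essentially reassembling \cite[Cor.~1.4]{Li} from its ingredients, which is why the paper's shortcut is cleaner. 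Your final paragraph correctly identifies this compatibility as the delicate point, but ``available in the literature'' needs the right reference in the singular setting.
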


\begin{proof}
Proposition~\ref{prop:all-test-conf} ensures that
$X$ has only finitely many special toric degenerations
in the sense of~\cite[Cor.~1.4]{Li}.
The latter tells us that this situation, $K$-polystability
of $X$ implies existence of a K\"ahler-Einstein metric on $X$.
Conversely, if~$X$ admits a K\"ahler-Einstein metric,
then~\cite[Thm.~1.1]{Berman} guarantees $K$-polystability
of $X$. In particular, $X$ is equivariantly $K$-polystable.
\end{proof}

\begin{theorem}
\label{thm:ke-crit}
A del Pezzo $\CC^*$-surface $X=X(A,P,\Sigma)$ admits a
K\"ahler-Einstein metric if and only if the coordinates of
the barycenter $b_\kappa \in \mathcal{B}_\kappa \subseteq \QQ^2$
satisfy $b_{\kappa,1} = 0$ for all $0 \le \kappa \le r$
and $b_{\kappa,2} > 0$ for all special $0 \le \kappa \le r$.
\end{theorem}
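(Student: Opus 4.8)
The plan is to invoke the K\"ahler-Einstein criterion proved just above---$X$ admits a K\"ahler-Einstein metric if and only if it is equivariantly $K$-polystable---and to make the defining inequality explicit by enumerating the special equivariant test configurations of $X$ and computing their Donaldson-Futaki invariants. Since a del Pezzo $\CC^*$-surface is a projective, rational, complexity-one $T$-variety with $T=\CC^*$ and only constant invariant functions, Propositions~\ref{prop:all-test-conf} and~\ref{prop:testconf-APS} tell us that, up to equivalence, the equivariant test configurations of $X$ are the \emph{product} ones $X\times\CC$ with $\CC^*$ acting through some $\lambda\in\Lambda(T)$, together with the families $(\mathcal{X}_\kappa,\mathcal{L}_\kappa,\psi_\kappa)$ from Construction~\ref{constr:deformdata} for $0\le\kappa\le r$, where the $\CC^*$-part $\lambda$ runs through the one parameter subgroups whose antitropical coordinate vector is $(v,\ell)\in\ZZ\times\ZZ_{\ge 1}$ (Remark~\ref{rem:antitropical-oneparameter}). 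By Proposition~\ref{prop:fibers}~(v), equivalently Proposition~\ref{prop:testconf-APS}~(iii), such a family is \emph{special} exactly when $\kappa$ is combinatorially special; product configurations are always special.

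Next I would compute the Donaldson-Futaki invariants. A product configuration with $\CC^*$-part $\lambda$ has $\mathrm{DF}=F_X(\lambda)$ by definition. For a special $\kappa$ and $\lambda$ with antitropical vector $(v,\ell)$, one has $\mathrm{DF}(\mathcal{X}_\kappa,\mathcal{L}_\kappa,\psi_\kappa)=F_{\mathcal{X}_{\kappa,0}}(\lambda)$. Using Proposition~\ref{prop:testconf-APS}~(iv) to arrange that $L$ restricts to an anticanonical divisor on $X$, the fiber $\mathcal{X}_{\kappa,0}$ is a toric Fano surface whose anticanonical moment polytope, normalised so that the origin is its unique interior lattice point, is exactly $\mathcal{B}_\kappa$ (Construction~\ref{constr:moment-polytope} and Proposition~\ref{prop:conemeetsdeg}~(v), where $\mathcal{A}_\kappa^*=\mathcal{B}_\kappa$). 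The classical formula for the Futaki character of a toric Fano variety then gives $F_{\mathcal{X}_{\kappa,0}}=c_\kappa\,b_\kappa$ with a positive constant $c_\kappa$, so that $\mathrm{DF}(\mathcal{X}_\kappa,\mathcal{L}_\kappa,\psi_\kappa)=c_\kappa(b_{\kappa,1}v+b_{\kappa,2}\ell)$. Moreover, since $h_k$ and the dimensions $h_{k,\chi}$ of the $T$-eigenspaces are constant along the flat family $\psi_\kappa$, the Futaki character of $X$ equals $F_{\mathcal{X}_{\kappa,0}}$ restricted to $T$; hence $b_{\kappa,1}$ is a positive multiple of $F_X$ for every $\kappa$, and the condition ``$b_{\kappa,1}=0$ for all $0\le\kappa\le r$'' just expresses the vanishing $F_X=0$.

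Finally I would read off the equivalence. Applied to $\lambda$ and $-\lambda$, the product configurations force $F_X=0$, that is $b_{\kappa,1}=0$ for all $\kappa$, and then impose no further constraint because they are products. For a special $\kappa$ the family $(\mathcal{X}_\kappa,\mathcal{L}_\kappa,\psi_\kappa)$ with $\lambda\leftrightarrow(v,\ell)$ has $\mathrm{DF}=c_\kappa b_{\kappa,2}\ell$; never being a product, equivariant $K$-polystability demands $\mathrm{DF}>0$ for every admissible $\ell\ge 1$, which holds if and only if $b_{\kappa,2}>0$. Conversely, if $b_{\kappa,1}=0$ for all $\kappa$ and $b_{\kappa,2}>0$ for every special $\kappa$, then every special test configuration has nonnegative Donaldson-Futaki invariant, with equality only for products, so $X$ is equivariantly $K$-polystable and the K\"ahler-Einstein criterion concludes the proof. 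I expect the main obstacle to be the middle step: identifying $\mathcal{B}_\kappa$ as the correctly normalised moment polytope and nailing down the positivity of $c_\kappa$ together with all sign conventions---in the toric Futaki formula, in $F_X$, and in the antitropical coordinates---so that the resulting inequality is $b_{\kappa,2}>0$ rather than $b_{\kappa,2}<0$, as well as the degeneration-invariance $F_X=F_{\mathcal{X}_{\kappa,0}}|_T$ that makes $b_{\kappa,1}$ proportional to $F_X$ uniformly in $\kappa$.
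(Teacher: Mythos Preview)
Your proposal is correct and follows the same overall strategy as the paper: reduce to equivariant $K$-polystability via the preceding theorem, enumerate the special test configurations using Propositions~\ref{prop:all-test-conf} and~\ref{prop:testconf-APS}, compute each Donaldson--Futaki invariant as a pairing of $\lambda$ with the barycenter $b_\kappa$ of $\mathcal{B}_\kappa$, and read off the two conditions from the sign of the second antitropical coordinate of $\lambda$ (Remark~\ref{rem:antitropical-oneparameter}).

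The one substantive difference lies in how you tie $F_X$ to $b_{\kappa,1}$ for the product configurations. The paper computes $F_X$ directly via the divisorial-polytope formula of~\cite{Picturebook} and then invokes~\cite[Cor.~4.6]{IlSu} to identify each $\mathcal{B}_\kappa$ with a polytope $\Delta_y$, obtaining $F_X(\lambda)=\vol(\mathcal{B}_\kappa)\,b_{\kappa,1}\,\lambda$ uniformly in~$\kappa$. You instead argue that the $T$-eigenspace dimensions $h_{k,\chi}$ are constant along the flat family $\psi_\kappa$, so $F_X=F_{\mathcal{X}_{\kappa,0}}\vert_T$, and then apply the toric barycenter formula on the special fibre. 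Your route is more self-contained within the paper's own framework and avoids importing the divisorial-polytope machinery; the paper's route has the advantage of treating all $\kappa$ on an equal footing without needing the toric Futaki formula on the possibly non-normal fibre $\mathcal{X}_{\kappa,0}$ when $\kappa$ is not special (where $\mathcal{B}_\kappa=\mathcal{C}_\kappa$ is unshifted and Proposition~\ref{prop:conemeetsdeg}(v) does not apply). Since all the $b_{\kappa,1}$ agree---as the paper records in the remark following the proof---this is a cosmetic rather than a genuine gap, and your anticipated difficulties about signs and normalisations are exactly the points where the paper leans on the cited external results.
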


\begin{proof}
We first treat the product test configurations.
Here, $\operatorname{DF} (\mathcal{X},\mathcal{L},\psi)=F_X(\lambda)$.
Hence, the K-polystability condition means that the Futaki
character $F_X$ has to vanish.
In \cite{Picturebook} a formula for the Futaki character $F_X$ in
terms of so-called divisorial polytopes was given.
In our setting this is a piecewise affine and concave
function  $\Psi \colon \RR \to \operatorname{Div}_\RR(\PP^1)$
into the group of divisors on $\PP^1$ supported
on a compact interval.
Such a divisorial polytope is a combinatorial invariant associated
with every polarized $\CC^*$-surface.
Then by \cite[Thm~3.15]{Picturebook}  we have the following
formula for the Futaki character
\[
F_X(\lambda) \ = \ \int \lambda u \cdot \deg  \Psi(u) du.
\]
On the other hand, by \cite[Cor.~4.6.]{IlSu} for every test
configuration the normalization of the special fibre is a
polarized toric variety corresponding to a polytope of the form
\[
\Delta_y
\  = \
\left\{u \in \RR^2; \
{\scriptstyle -\sum_{z\in \PP^1\setminus\{ y\}}} \Psi_z(u_1) \, \leq \, u_2 \, \leq \, \Psi_y(u_1)\right\},
\]
where $y \in \PP^1$ and $\Psi_y(u)$ is the coefficient of the
divisor $\Psi(u)$ at $y$.
This means that our $\mathcal{B}_\kappa$ are of this form as well.
Moreover, this representation of $\mathcal{B}_\kappa$ is
compatible with the projection $\RR \times \RR \to \RR$
to the first factor, since in both descriptions 
this projection is induced by inclusion of the torus
acting on $X$ into the torus acting on the special
fibre or by the surjection of the corresponding cocharacter
lattices, respectively.  Then we obtain
\[
F_X(\lambda)
=
\int \lambda u_1 \cdot \deg  \Psi(u_1) du_1
=
\int_{\Delta_y} \lambda u_1 du_1
=
\int_{\mathcal B_\kappa} \lambda u_1 du_1
=
\vol(\mathcal B_\kappa) b_{\kappa,1}.
\]
Hence, $F_X$ vanishes if and only if $b_{\kappa,1}=0$ holds.
We turn to the case of non-product special test configurations.
As, for instance, shown in~\cite[Lemma~3.2.9]{Donaldson},
the, Donaldson-Futaki invariant is given up to positive
constant  by  
\[
\langle \lambda , b_\kappa \rangle
\ = \
\int_{\mathcal  B_\kappa} \langle \lambda , u  \rangle du,
\]
where $\lambda$ is the one-parameter subgroup induced
by the $\CC^*$-action of the test configuration.
By Remark~\ref{rem:antitropical-oneparameter}~(ii)
the second anticanonical coordinate of $\lambda$
is positive.
Thus, $b_{\kappa,1}=0$ implies that we have a positive
Donaldson-Futaki invariant if and only if $b_{\kappa,2}>0$.
\end{proof}


\begin{remark}
In the situation of Theorem~\ref{thm:ke-crit},
we have $b_{0,1} = \ldots = b_{r,1}$ for the first
coordinates of the barycenters.
In particular, it suffices to test the condition
$b_{\kappa,1} = 0$ just for one $\kappa$.
\end{remark}

\begin{example}
\label{example:lbsp6}
Consider the surface $X = X(A,P,\Sigma)$ provided by
Examples~\ref{example:lbsp1} and~\ref{example:lbsp2}.
As noted in~\ref{example:lbsp4} the anticanonical
class of $X$ equals $w_{01}+w_{02}+w_{21}$.
Thus, for the defining matrix of the anticanonical cone
$X'$ over $X \subseteq Z$ we can take 
$$
P'
\ = \
\left[
\begin{array}{rrrrr}
-2 & -1 & 1 & 1 & 0
\\
-2 & -1 & 0 & 0 & 2
\\
3  & -1 & 0 & -1 & 1
\\
1  &  1 & 0 &  0 & 1
\end{array}
\right].
$$
We have $X' \subseteq Z'$, where $Z'$ is the affine
toric variety defined by cone $\sigma' \subseteq \QQ^{s+2}$
generated by the columns of $P'$.
The convergency cones $\tau_\kappa' = \eta_0^{-1}(\sigma')$
of the affine toric varieties
$\mathcal{X}'_{\kappa,0} \subseteq Z'$
are given in antitropical coordinates by
\[
\begin{array}{lcl}
\tau_0' & = & \cone((-1, 1, -1), \, (-1, 1, 2), \, (1, 1, 2), \, (3, 1, -2)),
\\[.5em]
\tau_1'  & = & \cone((-1, 0, -1),  \, (-1, 3, 2),  \, (0, 0, -1),  \, (2, 1, 1)),
\\[.5em]
\tau_2' & = & \cone((-2, 1, 1),  \, (1, 1, -2),  \, (1, 1, 2),  \, (3, 1, 2)),
\end{array}
\]
see Propostion~\ref{prop:fibers}~(iii). The respective dual cones of the
$\tau_\kappa'$ are given as
\[
\begin{array}{lcl}
\omega'_0 & = & \cone((0, 2, -1), \, (1, 1, 0), \, (-2, 4, -1), \,  (1, 5, 4)),
\\[.5em]
\omega'_1 & = & \cone((0, 1, 0), \,  (1, 1, -1), \, \, (-1, 2, 0), \, (1, 5, -7)),
\\[.5em]
\omega'_2 & = & \cone((1, 5, -3), \,  (1, 1, 1), \, (0, 2, -1), \, (-2, 4, 1)).
\end{array}
\]
For the corresponding moment polytopes and their barycenters,
we obtain
\[
\begin{array}{lclcl}
\mathcal{B}_0
& = &
\conv\left(
\left(0, -\frac{1}{2}\right), \,
(1, 0), \,
\left(-\frac{1}{2}, -\frac{1}{4}\right), \,
\left(\frac{1}{5}, \frac{4}{5}\right)
\right),
& & \left(\frac{41}{190}, \frac{79}{1140}\right),
\\[.5em]
\mathcal{B}_1
& = &
\conv\left(
(0, 1), \,
(1, 0), \,
\left(-\frac{1}{2}, 1 \right), \,
\left(\frac{1}{5}, -\frac{2}{5}\right)
\right),
& & \left(\frac{41}{190}, \frac{92}{285}\right),
\\[.5em]
\mathcal{B}_2
& = &
\conv\left(
\left(\frac{1}{5}, -\frac{3}{5}\right), \,
(1, 1), \,
\left(0, -\frac{1}{2}\right), \,
\left(-\frac{1}{2}, \frac{1}{4}\right)
\right),
& & \left(\frac{41}{190}, \frac{217}{1140}\right).                             
\end{array}
\]
Recall from Example~\ref{example:lbsp4}, that only
$\kappa=0,2$ are special.
As we have $b_{0,1} \ne 0$, Theorem~\ref{thm:ke-crit}
shows that the
del Pezzo $\CC^*$-surface $X$ from~\ref{example:lbsp1}
and~\ref{example:lbsp2} doesn't admit a K\"ahler-Einstein
metric.
\end{example}

The treatment of K\"ahler-Ricci solitons
requires a generalized notion of
equivariant $K$-polystability.
Again, let $(X,L)$ be a polarized Fano $T$-variety
as in Remark~\ref{rem:fanopolar}.
Given any element $\xi \in \Lambda_\RR(T)$,
one defines the \emph{modified Futaki character}
associated with $(X,L)$ and $\xi$ to be 
$$
F_{X,\xi}
\ := \
- \lim_{k \to \infty}
\frac{1}{k h_k}
\sum_{\chi \in \Chi(T)}
h_{k,\chi} e^{\frac{\bangle{\chi,\xi}}{k}} \chi
\ \in \
\mathbb{X}_\RR(T)
$$
where $h_k$ and $h_{k,\chi}$ are as before.
Let $\lambda \in \Lambda(\mathcal{T})$ be the
one parameter subgroup corresponding to
the $\CC^*$-part of $\mathcal{T} = T \times \CC^*$.
The \emph{modified Donaldson-Futaki invariant}
of a special equivariant test configuration
$(\mathcal{X},\mathcal{L},\psi)$ of $(X,L)$ is
$$
\mathrm{DF}_\xi (\mathcal{X},\mathcal{L},\psi)
\ := \
F_{\mathcal{X}_0,\xi}(\lambda).
$$
We say that $(X,\xi)$ is \emph{equivariantly $K$-polystable} if
$\mathrm{DF}_\xi (\mathcal{X},\mathcal{L},\psi) \ge 0$
holds for every special equivariant test configuration
with equality precisely in the case of product configurations.

\begin{theorem}
\label{thm:kstab-soliton}
A Fano variety $X$ with torus action admits a
K\"ahler-Ricci soliton if and only if $(X,\xi)$
is equivariantly $K$-polystable
for the $\xi \in \Lambda_\RR(T)$ with $F_{X,\xi} = 0$.
\end{theorem}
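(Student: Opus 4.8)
The plan is to reproduce, with the modified Futaki character in place of the Futaki character, the two-step scheme behind the Kähler--Einstein existence statement preceding Theorem~\ref{thm:ke-crit}. The first step is to identify and justify the soliton candidate, i.e.\ to show that there is a \emph{unique} $\xi \in \Lambda_\RR(T)$ with $F_{X,\xi} = 0$. This is the Tian--Zhu convexity argument, which persists in the (possibly singular) Fano $T$-variety setting: the limit functional on $\Lambda_\RR(T)$ obtained from the definition of $F_{X,\xi}$ by dropping the trailing factor $\chi$, so that $F_{X,\xi}$ is up to sign and a positive scalar its differential at $\xi$, is strictly convex and proper — strict convexity because the underlying weighted limit measure has full-dimensional support, properness because that support spans — hence it has a single critical point, and there $F_{X,\xi}$ vanishes. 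This uniqueness is precisely what forces a Kähler--Ricci soliton, should it exist on $X$, to have $\xi$ as its soliton vector field, which is why the statement singles $\xi$ out.

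With $\xi$ fixed, I would run the equivariant soliton Yau--Tian--Donaldson correspondence exactly as in the Kähler--Einstein case. For the implication ``$(X,\xi)$ equivariantly $K$-polystable $\Rightarrow$ Kähler--Ricci soliton'', the key input is Proposition~\ref{prop:all-test-conf}: in the complexity-one situation of interest it shows that, apart from product configurations, $X$ admits only the finitely many special equivariant (indeed toric) degenerations $\mathcal{X}_{\kappa,0}$, $0 \le \kappa \le r$. Feeding this finiteness into the soliton analogue of Li's result — the continuity-method arguments of Datar--Székelyhidi together with the work of Berman--Witt Nyström and Han--Li — yields a Kähler--Ricci soliton on $X$: modified $K$-polystability with respect to $\xi$, tested only against these finitely many $T$-equivariant special test configurations, suffices. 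For the converse, ``Kähler--Ricci soliton $\Rightarrow$ equivariant $K$-polystability'', I would apply the soliton analogue of Berman's obstruction theorem (again Berman--Witt Nyström / Han--Li): existence of the soliton, whose vector field must equal $\xi$ by the uniqueness above, forces $\mathrm{DF}_\xi(\mathcal{X},\mathcal{L},\psi) \ge 0$ for every special equivariant test configuration, with equality only for product configurations, so $(X,\xi)$ is equivariantly $K$-polystable.

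The difficulty is not any calculation but checking that each analytic ingredient used for Kähler--Einstein metrics has a genuine Kähler--Ricci soliton counterpart valid in the generality at hand, namely normal Fano $T$-varieties with no hypothesis on the singularities beyond normality. Concretely one must confirm that (i) the Tian--Zhu uniqueness of $\xi$ is available in this singular setting, (ii) the equivariant reduction of modified $K$-stability has been carried out for solitons, so that via Proposition~\ref{prop:all-test-conf} only finitely many test configurations need be examined, and (iii) both halves of the soliton YTD correspondence apply to these varieties. Once these are in place, the proof is a line-by-line transcription of the Kähler--Einstein argument with $F_X$ replaced by $F_{X,\xi}$.
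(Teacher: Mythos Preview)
Your proposal is essentially correct in spirit but takes a more elaborate route than the paper. The paper's proof is a single line: it cites \cite[Thm.~1.3]{BlLiXuZh} and \cite{HaLi}, which together establish the full Yau--Tian--Donaldson correspondence for K\"ahler--Ricci solitons on arbitrary Fano $T$-varieties, with no restriction on the complexity of the torus action and no need for any finiteness of test configurations. In particular, the paper treats Theorem~\ref{thm:kstab-soliton} as a black-box import from those references.

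Your approach differs in two ways. First, you restrict to complexity one by invoking Proposition~\ref{prop:all-test-conf} to get finiteness of special equivariant test configurations, mirroring the K\"ahler--Einstein argument just before Theorem~\ref{thm:ke-crit}. This is unnecessary here: the results of Blum--Liu--Xu--Zhuang and Han--Li apply without any such reduction, so the statement holds for arbitrary complexity as written. Since the paper only applies the theorem to $\CC^*$-surfaces, your restriction is harmless for the downstream applications, but it does not prove the theorem as stated. Second, you spell out the Tian--Zhu convexity argument for the uniqueness of $\xi$, which the paper leaves implicit; this is a helpful clarification but not part of the paper's proof. What your approach buys is transparency about which analytic inputs are actually being used and where the complexity-one structure could enter; what the paper's approach buys is generality and brevity, at the cost of deferring all content to \cite{BlLiXuZh} and \cite{HaLi}. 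Note that you do not cite \cite{BlLiXuZh}, which is the paper's main reference for the existence direction; the combination Datar--Sz\'ekelyhidi plus Berman--Witt Nystr\"om that you propose does not, on its own, cover the singular Fano case in the required generality.
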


\begin{proof}
Use~\cite[Thm.~1.3]{BlLiXuZh} and~\cite{HaLi}.
\end{proof}

\begin{theorem}
\label{thm:krs-crit}
A del Pezzo $\CC^*$-surface $X(A,P,\Sigma)$ admits a
K\"ahler-Ricci soliton if and only if there is a
$\xi \in \RR$, such that for every
special $0 \le \kappa \le r$ we have
\[
\int_{\mathcal{B}_\kappa} u_1 e^{\xi u_1} du_1du_2
\ = \
0,
\qquad\qquad
\int_{\mathcal{B}_\kappa} u_2 e^{\xi u_1} du_1du_2
\ > \
0.
\]
\end{theorem}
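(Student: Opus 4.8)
The plan is to invoke Theorem~\ref{thm:kstab-soliton} and then run the
argument of the proof of Theorem~\ref{thm:ke-crit}, carrying the weight
$e^{\langle \chi,\xi\rangle/k}$ through all Riemann-sum limits.
By Theorem~\ref{thm:kstab-soliton}, the del Pezzo $\CC^*$-surface
$X = X(A,P,\Sigma)$ admits a K\"ahler--Ricci soliton if and only if
$(X,\xi_0)$ is equivariantly $K$-polystable, where
$\xi_0 \in \Lambda_\RR(T) = \RR$ is the unique element with
$F_{X,\xi_0} = 0$. As in the proof of Theorem~\ref{thm:ke-crit}, I would
split the special equivariant test configurations into the product ones,
for which $\mathrm{DF}_{\xi_0}(\mathcal{X},\mathcal{L},\psi) = F_{X,\xi_0}(\lambda)$,
and the non-product ones, which by Proposition~\ref{prop:all-test-conf} have
special fibre isomorphic to one of the toric special fibres
$\mathcal{X}_{\kappa,0}$ of the degenerations $\psi_\kappa$ from
Proposition~\ref{prop:testconf-APS}, necessarily with $\kappa$ special,
since normality of $\mathcal{X}_0$ forces normality of $\mathcal{X}_{\kappa,0}$.

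First I would pin down $\xi_0$. The formula of~\cite[Thm~3.15]{Picturebook}
for the Futaki character in terms of the divisorial polytope
$\Psi \colon \RR \to \operatorname{Div}_\RR(\PP^1)$ of a polarized $\CC^*$-surface
deforms, upon inserting the weight, into
\[
F_{X,\xi}(\lambda)
\ = \
\int \lambda u \, e^{\xi u} \, \deg \Psi(u) \, du .
\]
By~\cite[Cor.~4.6]{IlSu} together with Proposition~\ref{prop:conemeetsdeg},
the moment polytope $\mathcal{B}_\kappa$ attached to a special $\kappa$ is the
divisorial polytope $\Delta_{\alpha_\kappa}$, whose fibre over $u_1$ has length
exactly $\deg \Psi(u_1)$; moreover, as in the proof of
Theorem~\ref{thm:ke-crit}, this description is compatible with the projection
$\RR \times \RR \to \RR$ onto the first coordinate. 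Hence
\[
F_{X,\xi}(\lambda)
\ = \
\lambda \int_{\mathcal{B}_\kappa} u_1 \, e^{\xi u_1} \, du_1 du_2
\]
for every special $\kappa$, and in particular this integral does not depend on
$\kappa$. Its $\xi$-derivative $\int_{\mathcal{B}_\kappa} u_1^2 \, e^{\xi u_1} \, du_1 du_2$
is positive, so $\xi \mapsto F_{X,\xi}(\lambda)/\lambda$ is strictly
increasing, $\xi_0$ is unique, and $F_{X,\xi_0} = 0$ is exactly the first
displayed condition in the statement. For the product configurations this
yields $\mathrm{DF}_{\xi_0}(\mathcal{X},\mathcal{L},\psi) = 0$, which is admissible.

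Next I would handle the non-product special configurations. For special
$\kappa$, Proposition~\ref{prop:conemeetsdeg}~(iv),(v) shows that
$\mathcal{X}_{\kappa,0}$ is a toric Fano variety whose dual Fano polytope is
$\mathcal{B}_\kappa$, and by Remark~\ref{rem:antitropical-oneparameter}~(ii)
the $\CC^*$-part of the test configuration is a one parameter subgroup
$\lambda = (v,\ell)$ of the torus acting on $\mathcal{X}_{\kappa,0}$, in
antitropical coordinates, with $\ell > 0$. The weighted toric
Donaldson--Futaki computation, i.e.\ the $e^{\langle \xi, \cdot \rangle}$-deformation
of~\cite[Lemma~3.2.9]{Donaldson} familiar from the toric theory of
K\"ahler--Ricci solitons, gives, up to a positive constant,
\[
\mathrm{DF}_{\xi_0}(\mathcal{X}_\kappa,\mathcal{L}_\kappa,\psi_\kappa)
\ = \
F_{\mathcal{X}_{\kappa,0},\xi_0}(\lambda)
\ = \
\int_{\mathcal{B}_\kappa} \langle \lambda, u \rangle \, e^{\xi_0 u_1} \, du_1 du_2 ,
\]
where $\xi_0$ enters only through $u_1$ because the torus acting on $X$
embeds into the torus acting on $\mathcal{X}_{\kappa,0}$ via the first
coordinate, the same compatibility used above. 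Since
$\int_{\mathcal{B}_\kappa} u_1 \, e^{\xi_0 u_1} \, du_1 du_2 = 0$ by the choice
of $\xi_0$, this equals $\ell \int_{\mathcal{B}_\kappa} u_2 \, e^{\xi_0 u_1} \, du_1 du_2$,
and, $\ell$ being positive, the modified Donaldson--Futaki invariant is
positive for every such configuration if and only if
$\int_{\mathcal{B}_\kappa} u_2 \, e^{\xi_0 u_1} \, du_1 du_2 > 0$ for every
special $\kappa$. Combining the two paragraphs, $(X,\xi_0)$ is equivariantly
$K$-polystable precisely when the two conditions in the statement hold.

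The main obstacle I anticipate is the opening step of each of the last two
paragraphs: establishing the weighted versions of~\cite[Thm~3.15]{Picturebook}
and of the toric Donaldson--Futaki formula, and checking that the deformation
parameter $\xi$ is seen only through the first coordinate $u_1$. These are
the $e^{\langle \chi,\xi\rangle/k}$-deformations of the identities already used
in the proof of Theorem~\ref{thm:ke-crit}; once they are available, what
remains is the same elementary bookkeeping with polytopes and barycenters.
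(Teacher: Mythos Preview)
Your proposal is correct and follows essentially the same route as the paper: invoke Theorem~\ref{thm:kstab-soliton} and then rerun the argument of Theorem~\ref{thm:ke-crit} with the exponential weight inserted. The paper's own proof is much terser; it dispatches precisely what you flag as the ``main obstacle'' --- the weighted integral formula for the modified Futaki character on the toric special fibre --- by citing~\cite{WZZ} and~\cite[Cor.~4.11]{IlSu} directly, rather than deriving it as an $e^{\langle \chi,\xi\rangle/k}$-deformation of~\cite[Thm.~3.15]{Picturebook} and~\cite[Lemma~3.2.9]{Donaldson}.
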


\begin{proof}
This follows from Theorem~\ref{thm:kstab-soliton} by using
the integral formula for the modified Futaki character in
the toric case, which can be found e.g. in~\cite{WZZ},
see also \cite[Cor.~4.11]{IlSu}.
\end{proof}

\begin{example}
\label{example:lbsp7}
Consider $X = X(A,P,\Sigma)$ from
Examples~\ref{example:lbsp1} and~\ref{example:lbsp2}.
Example~\ref{example:lbsp6} provides 
the polytopes $\mathcal{B}_0$ and $\mathcal{B}_2$
for the special test configurations.
The first identity of Theorem~\ref{thm:krs-crit}
is satisfied by a unique $\xi \in \RR$ and
we obtain
\[
2.4984  \ \le \ \xi \ \le \  2.4988
\]  
for both cases, $\kappa=0$ and $\kappa=2$,
using interval arithmetic as described in~\cite{CaSu}.
Feeding this estimate into the calculation of the
integral in the scond identity, gives us the bounds
\[
0.0009 \leq \int_{\mathcal{B}_0} u_2 e^{\xi u_1} du_1du_2 \leq 0.0010,
\qquad
0.0797 \leq \int_{\mathcal{B}_2} u_2 e^{\xi u_1} du_1du_2 \leq 0.0799.
\]
This guarantees a positive value in both cases.
Hence, Theorem~\ref{thm:krs-crit} tells us that
the surface  $X = X(A,P,\Sigma)$ from
Examples~\ref{example:lbsp1} and~\ref{example:lbsp2}
admits a K\"ahler-Ricci soliton.
\end{example}

Finally, we come to Sasaki-Einstein metrics on the anticanonical link.
Consider a Fano $T$-variety $X$ with a polarization given by a
linearized $T$-invariant anticanonical divisor $L$ as in
Remark~\ref{rem:fanopolar}. The anticanonical cone over $X$ is the
affine variety
\[
X' \ := \ \Spec \, R(L),
\qquad
R(L)
\ := \
\bigoplus_{k \geq 0} H^0(X, \mathcal{O}(-kL))
\ \cong \
\mathcal{R}(X)([L]).
\]
The natural $T$-linearization of $L$ induces an action of
$T' = T \times \CC^*$ on the anticanonical cone $X'$ and
a grading of its algebra of global functions:
\[
R(L)
\ = \
\bigoplus_{\XX(T) \times \ZZ} R(L)_u .
\]
The weight cone
$\vartheta(X') \subseteq \XX_\RR(T') = \XX_\RR(T) \times \RR$
of this action is pointed and of full dimension.
The \emph{Reeb cone} of $X'$, also of its link,
is given by the dual cone
\[
\tau(X')
\ = \
\vartheta(X')^\vee
\ \subseteq \
\Lambda_\RR(T')
\ = \ 
\Lambda_\RR(T)\times \RR.
\]  
Any (not necessarily rational) $\xi \in \tau(X')^\circ$ is called
a \emph{polarization} of $X'$ and its \emph{log discrepancy}
in the sense of~\cite{JM} is $A_{X'}(\xi):=\langle \xi , (0,1) \rangle$.
Following~\cite[2.5]{LiXu} we set
\[
\vol_{X'}(\xi)
\ := \
\dim(X')!
\lim_{k\to \infty}
\sum_{\genfrac{}{}{0pt}{1}{u \in \XX(T) \times \ZZ}{\langle u, \xi \rangle \leq k}}
\frac{\dim(R_u)}{k^{\dim(X')}}.
\]
This volume function is differentiable in $\xi$.
Moreover, for every
$\lambda \in \Lambda_\RR(T) \times \RR$,
we can define
\[
F_{X',\xi}(\lambda)
\ : = \
\left.\frac{d \vol(\xi - s \cdot (A_{X'}(\xi)\lambda-A_{X'}(\lambda)\xi))}{ds}\right|_{s=0}.
\]

Now, let $(\mathcal{X}', \mathcal{L}', \psi')$
be an equivariant test configuration 
for $(X',L')$, where $L'$ is the naturally
$T'$-linearized (trivial) anticanonical divisor
of $X'$ and let $\lambda \in \Lambda(\mathcal{T}')$ 
correspond to the $\CC^*$-part of
$\mathcal{T}' = T' \times \CC^*$.
Then 
\[
\operatorname{DF}_\xi(\mathcal{X}',\mathcal{L}',\psi')
\ := \
F_{\mathcal{X}'_0,\xi}(\lambda),
\]
is the \emph{Donaldson-Futaki invariant} for 
$(\mathcal{X}', \mathcal{L}', \psi')$.
We call $(X',\xi)$ \emph{equivariantly $K$-polystable} if
$\mathrm{DF}_\xi (\mathcal{X}',\mathcal{L}',\psi') \ge 0$
holds for every special equivariant test configuration
with equality precisely in the case of product configurations.

\begin{theorem}
\label{thm:k-stab-sasaki}
If a Fano variety with torus action admits a Sasaki-Einstein metric
with Reeb vector $\xi$ on its anticanonical cone link,
then its anticanonical cone is equivariantly $K$-polystable
with respect to the polarization given by $\xi$.
\end{theorem}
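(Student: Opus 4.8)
The plan is to reduce this to the theorem of Collins and Sz\'ekelyhidi, which asserts that a Fano cone singularity whose link carries a Sasaki-Einstein metric with a prescribed Reeb field $\xi$ is $K$-polystable with respect to $\xi$; the real work is then to match our combinatorial notions of equivariant test configuration and Donaldson-Futaki invariant for the anticanonical cone with the ones used there.

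First I would check that the anticanonical cone $X' = \Spec R(L)$ with its $T' = T \times \CC^*$-action and the polarisation $\xi \in \tau(X')^\circ$ is a Fano cone singularity in the required sense: $X'$ is normal and affine by construction, it is $\QQ$-Gorenstein with the canonical grading recorded by the last $\CC^*$-factor, and $\xi$, lying in the interior of the Reeb cone $\tau(X') = \vartheta(X')^\vee$, is a legitimate Reeb field with positive log discrepancy $A_{X'}(\xi) = \langle \xi, (0,1)\rangle$. A Sasaki-Einstein metric on the link $S_{X'}$ is, via the standard cone construction, exactly a Ricci-flat K\"ahler cone metric on $X' \setminus \{x'\}$ with Reeb field $\xi$, so the hypothesis places us in the situation of the Collins-Sz\'ekelyhidi criterion. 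That criterion then delivers: for every special test configuration $(\mathcal{X}', \mathcal{L}', \psi')$ of $X'$ --- one whose central fibre $\mathcal{X}'_0$ is again a normal Fano cone --- the Donaldson-Futaki invariant is non-negative and vanishes only for product configurations; compare also \cite{LiXu} for the purely algebraic formulation via stability of valuations, and \cite{JM} for the log discrepancy.

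Next I would argue that our equivariant test configurations of $(X',L')$, equipped with the invariant $\mathrm{DF}_\xi(\mathcal{X}',\mathcal{L}',\psi') = F_{\mathcal{X}'_0,\xi}(\lambda)$, form a subclass of those covered by the criterion and carry the same invariant up to a positive constant. For the inclusion, Proposition~\ref{prop:all-test-conf} shows that any non-product equivariant test configuration of $X'$ is, after an equivariant isomorphism, one of the combinatorial families $\mathcal{X}'_\kappa \to \CC$, and Proposition~\ref{prop:conemeetsdeg}(iv) shows that for special $\kappa$ the central fibre $\mathcal{X}'_{\kappa,0}$ is the anticanonical cone over the toric Fano variety $\mathcal{X}_{\kappa,0}$, hence a normal Fano cone with its Reeb field still interior; product configurations match trivially. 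For the invariants, I would check that $\vol_{X'}$ as defined here agrees, up to the factor $\dim(X')!$, with the Martelli-Sparks-Yau volume, and that its first variation in the direction $A_{X'}(\xi)\lambda - A_{X'}(\lambda)\xi$ --- the direction tangent to the slice $\{A_{X'} = A_{X'}(\xi)\}$ through $\xi$, i.e.\ the normalised direction along which the test configuration acts on the cone --- recovers, up to a positive multiple, the Donaldson-Futaki invariant of $\mathcal{X}'_0$ appearing in that criterion. Since positive rescaling affects neither the sign nor the equality case, this identification shows that $K$-polystability of $(X',\xi)$ in the sense of Collins-Sz\'ekelyhidi implies equivariant $K$-polystability of the anticanonical cone in the sense of this section, the restriction to \emph{equivariant} test configurations costing nothing because full $K$-polystability is a fortiori equivariant.

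The hard part will be precisely this last identification of the Donaldson-Futaki invariants: one must carry the $A_{X'}$-normalisation through the derivative formula defining $F_{X',\xi}$ and confront it with the volume-minimisation picture of Martelli-Sparks-Yau and the test-configuration Futaki invariant of Collins-Sz\'ekelyhidi, including the bookkeeping of the trivial anticanonical polarisation $\mathcal{L}'$ on the affine cone, which here serves only to record the grading of $R(L)$. Everything else --- normality and the $\QQ$-Gorenstein property of $X'$, the dictionary between Sasaki-Einstein metrics on the link and Ricci-flat K\"ahler cone metrics, and the reduction from arbitrary to equivariant test configurations --- is either routine or already supplied by Propositions~\ref{prop:all-test-conf} and~\ref{prop:conemeetsdeg}.
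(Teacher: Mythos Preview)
Your overall strategy---reduce to a known ``Sasaki-Einstein implies $K$-polystable'' theorem, then note that equivariant $K$-polystability is weaker than full $K$-polystability---is right, and it is essentially what the paper does. But there is a genuine gap and a genuine overcomplication.

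The gap: you invoke only Collins--Sz\'ekelyhidi~\cite{ColSze}, whose hypothesis is that the link is a \emph{smooth} Sasaki manifold, equivalently that the apex is an \emph{isolated} singularity of the cone. In the setting of this paper the base~$X$ is a \emph{log} del Pezzo surface, hence typically singular, and then the anticanonical cone $X'$ is singular along the preimages of $\mathrm{Sing}(X)$, so the link is only an orbifold. Collins--Sz\'ekelyhidi does not apply there. The paper handles exactly this by citing~\cite[Cor.~A.4]{LWX} for the general (non-isolated) case; that is the reference you are missing, and your passing mention of~\cite{LiXu} does not substitute for it.

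The overcomplication: the theorem is stated for an arbitrary Fano variety with torus action, and the definitions of $\vol_{X'}$, $F_{X',\xi}$, $\mathrm{DF}_\xi$ and equivariant $K$-polystability given just before the theorem are the standard abstract ones, not the combinatorial incarnations special to complexity one. So there is nothing to ``match'': once the cited theorems deliver (full) $K$-polystability of $(X',\xi)$, equivariant $K$-polystability is immediate, as you yourself note at the end. Your detour through Propositions~\ref{prop:all-test-conf} and~\ref{prop:conemeetsdeg} is both unnecessary and out of scope---those results are about complexity-one varieties and are used later, in Theorem~\ref{thm:se-crit}, not here. The paper's proof is accordingly two sentences of citation.
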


\begin{proof}
For the case of a smooth link, or equivalently 
an isolated singularity in the cone, this
is~\cite[Thm.~7.1]{ColSze}. In the general case,
\cite[Cor.~A.4]{LWX} gives the assertion.
\end{proof}

\begin{remark}
In the case of a smooth link, \cite[Thm.~1.1]{ColSze}
provides also the converse of the above theorem.
\end{remark}

\begin{remark}
In the setting of Construction~\ref{constr:moment-polytope},
the Reeb cone of $X'$ equals the defining cone 
of the ambient toric variety $Z'$ of $X'$
and hence equals the cone $\sigma' \subseteq \RR^{s+2}$
generated by the columns of $P'$.
\end{remark}

\begin{construction}
\label{momentpol}
Consider a Fano variety $X = X(A,P,\Sigma)$
with $\Sigma$ polytopal.
Given a special $0 \le \kappa \le r$,
we have the mutually dual cones
\[
\tau_\kappa \ \subseteq \ \RR^{s'+1},
\qquad
\omega_\kappa \ \subseteq \ \RR^{s'+1}
\]
introduced in Proposition~\ref{prop:conemeetsdeg}~(v).
With any vector 
$\xi \in (\tau_\kappa \cap \RR^s\times \{1\} \times \RR)^\circ$
we associate the polytope
\[
\mathcal{B}'_\kappa(\xi)
\ := \
\omega_\kappa \cap [\langle \xi, \cdot \rangle \leq 1]
\ \subseteq \
\RR^{s'+1}.
\]
\end{construction}

\begin{theorem}
\label{thm:se-crit}
If a del Pezzo $\CC^*$-surface $X = X(A,P,\Sigma)$
admits a Sasaki-Einstein metric on its
anticanonical cone link then there is a vector
$\xi=(\xi_1,1,0)$ in
$(\tau_\kappa \cap \RR^s\times \{1\}\times\{0\})^\circ$
such that for every special $0 \leq \kappa \leq r$
we have
\[
\left.\frac{d \vol(\mathcal{B}'_\kappa(\xi_1-s,1,0))}{ds}\right|_{s=0}
\ = \
0,
\qquad
\left.\frac{d \vol(\mathcal{B}'_\kappa(\xi_1,1,-s))}{ds}\right|_{s=0}
\ > \
0.
\]
\end{theorem}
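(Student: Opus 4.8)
The plan is to derive the two conditions from equivariant $K$-polystability of the anticanonical cone $X'$ over $X$, applied to the toric degenerations of Sections~\ref{sec:constr-tordeg} and~\ref{sec:testconf}, in the same way as Theorems~\ref{thm:ke-crit} and~\ref{thm:krs-crit} are proven. So assume $X$ carries a Sasaki-Einstein metric on its anticanonical cone link with Reeb vector $\xi_0$. Then $\xi_0$ lies in the interior of the Reeb cone $\tau(X')$, and after rescaling we may assume $A_{X'}(\xi_0)=1$; this rescaling multiplies every Donaldson-Futaki invariant by a positive power of the original $A_{X'}(\xi_0)$ and hence does not affect $K$-polystability. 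By Theorem~\ref{thm:k-stab-sasaki}, $(X',\xi_0)$ is equivariantly $K$-polystable.

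Now $X'$ is a rational affine, hence semiprojective, $T'$-variety of complexity one with $T'=T\times\CC^*$ and only constant invariant functions. Thus Propositions~\ref{prop:affinecone}, \ref{prop:all-test-conf} and~\ref{prop:conemeetsdeg} apply: fixing a suitable presentation $X=X(A,P,\Sigma)$ with $\Sigma$ polytopal, the non-product special equivariant test configurations of $X'$ are precisely the families $\psi'_\kappa\colon\mathcal{X}'_\kappa\to\CC$ of Construction~\ref{constr:moment-polytope} for the special $0\le\kappa\le r$, and each central fibre $\mathcal{X}'_{\kappa,0}$ is the normal affine toric variety with full-dimensional cone $\tau_\kappa\subseteq\RR^{s'+1}$ and dual cone $\omega_\kappa$, in coordinates chosen so that the generators of $\tau_\kappa$ lie on $\{y_{s'}=1\}$. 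In these coordinates the log-discrepancy functional of $\mathcal{X}'_{\kappa,0}$ is the $s'$-th coordinate functional $y\mapsto y_{s'}$, the cocharacter lattice of $T'$ is the hyperplane $\{y_{s'+1}=0\}$, and the $\CC^*$-direction of the test configuration has positive $(s'+1)$-th coordinate. In particular $\xi_0=(\xi_1,1,0)$ and $\xi_0\in\tau_\kappa^\circ$; moreover, as $\psi'_\kappa$ is flat and $T'$-equivariant (Proposition~\ref{prop:fibers}(i)), one has $\tau_\kappa\cap\{y_{s'+1}=0\}=\tau(X')\times\{0\}$, so that in fact $\xi_0\in(\tau_\kappa\cap\RR^s\times\{1\}\times\{0\})^\circ$.

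Fix a special $\kappa$ and write $\xi:=\xi_0$. By Remark~\ref{rem:antitropical-oneparameter}, a one-parameter subgroup $\lambda$ of $\mathcal{T}'=T'\times\CC^*$ compatible with $\psi'_\kappa$ has, in the above coordinates, the form $\lambda=(\lambda_1,\lambda_2,\lambda_3)$ with $\lambda_1,\lambda_2$ arbitrary and $\lambda_3>0$. Since $A(\xi)=\xi_{s'}=1$ and $A(\lambda)=\lambda_{s'}=\lambda_2$ for the log-discrepancy $A=A_{\mathcal{X}'_{\kappa,0}}$, the vector $\xi-s\bigl(A(\xi)\lambda-A(\lambda)\xi\bigr)$ equals $\bigl(\xi_1-s(\lambda_1-\lambda_2\xi_1),\,1,\,-s\lambda_3\bigr)$, whose middle coordinate stays $1$. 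As $\mathcal{X}'_{\kappa,0}$ is a normal affine toric variety, lattice-point counting gives $\vol_{\mathcal{X}'_{\kappa,0}}(y)=(s'+1)!\,\vol\bigl(\mathcal{B}'_\kappa(y)\bigr)$ for $y\in\tau_\kappa^\circ$, with $\mathcal{B}'_\kappa$ as in Construction~\ref{momentpol}. Writing $g(a,b):=\vol(\mathcal{B}'_\kappa(a,1,b))$, which is differentiable near $(\xi_1,0)$, the chain rule yields
\[
\mathrm{DF}_\xi(\mathcal{X}'_\kappa,\mathcal{L}'_\kappa,\psi'_\kappa)
\ = \
F_{\mathcal{X}'_{\kappa,0},\xi}(\lambda)
\ = \
-(s'+1)!\bigl[(\lambda_1-\lambda_2\xi_1)\,\partial_a g(\xi_1,0)+\lambda_3\,\partial_b g(\xi_1,0)\bigr].
\]
These test configurations are non-product ($\mathcal{X}'_{\kappa,0}$ is toric while $X'$ is not), so $K$-polystability forces the right-hand side to be strictly positive for all admissible $\lambda$. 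Letting $\lambda_1$ range over $\RR$ with $\lambda_2,\lambda_3$ fixed forces $\partial_a g(\xi_1,0)=0$, and then strict positivity for all $\lambda_3>0$ forces $\partial_b g(\xi_1,0)<0$. Unravelling, $\left.\tfrac{d}{ds}\vol(\mathcal{B}'_\kappa(\xi_1-s,1,0))\right|_{s=0}=-\partial_a g(\xi_1,0)=0$ and $\left.\tfrac{d}{ds}\vol(\mathcal{B}'_\kappa(\xi_1,1,-s))\right|_{s=0}=-\partial_b g(\xi_1,0)>0$ for every special $\kappa$, which are the asserted identities. (The first identity can also be read off from the product test configurations, being the vanishing of $F_{X',\xi}$ on the $T$-direction, which expresses that $\xi$ is the volume-minimizing Reeb direction; this is why it holds for every special $\kappa$ with the same $\xi_1$.)

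The main obstacle is the bookkeeping behind the displayed formula. One must verify, using Proposition~\ref{prop:conemeetsdeg}(v) and the construction of the matrix $G_\kappa$ there, that in the chosen coordinates the log-discrepancy of $\mathcal{X}'_{\kappa,0}$ really is the $s'$-th coordinate functional and that $\Lambda_\RR(T')$ sits inside $\RR^{s'+1}$ as $\{y_{s'+1}=0\}$, so that $A(\xi)\lambda-A(\lambda)\xi$ preserves the slice $\{y_{s'}=1\}$; and one must check via Remark~\ref{rem:antitropical-oneparameter} that the compatible test-configuration directions genuinely fill out $\{\lambda_{s'+1}>0\}$ with the remaining coordinates free. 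Granting this, the toric volume formula, the differentiability of $\vol$ recalled in Section~\ref{sec:polystability}, and the elementary passage from ``$\mathrm{DF}_\xi\ge0$ for all $\lambda$'' to the two inequalities are routine, exactly as in Theorems~\ref{thm:ke-crit} and~\ref{thm:krs-crit}.
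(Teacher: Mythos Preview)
Your proof is correct and follows essentially the same route as the paper: invoke Theorem~\ref{thm:k-stab-sasaki} to get equivariant $K$-polystability of $(X',\xi)$, use the toric volume identity $\vol_{\mathcal{X}'_{\kappa,0}}(\xi)=(s'+1)!\,\vol(\mathcal{B}'_\kappa(\xi))$, and read off the two conditions from the sign of the Donaldson--Futaki invariant on the special test configurations $\psi'_\kappa$. The paper compresses all of this into a single sentence citing~\cite[3.2.1]{LiXu} and~\cite[Thm.~4.10]{Suess} for the volume identity, whereas you unpack the coordinate bookkeeping (normalizing $A_{X'}(\xi)=1$, identifying the log-discrepancy with the $s'$-th coordinate via Proposition~\ref{prop:conemeetsdeg}(v), and using Remark~\ref{rem:antitropical-oneparameter} to let the test-configuration direction range over $\{\lambda_{s'+1}>0\}$) and carry out the chain-rule computation explicitly.
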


\begin{proof}
This follows from Theorem~\ref{thm:k-stab-sasaki}
by using the identity
$\vol_{\mathcal{X}'_0}(\xi)=\vol(\mathcal{B}'_\kappa(\xi))$,
which appears in~\cite[3.2.1]{LiXu}, see
also~\cite[Thm.~4.10.]{Suess}.
\end{proof}

In our computations, we worked with the following slightly
more explicit formulation of Theorem~\ref{thm:se-crit}.

\begin{remark}
\label{rem:se-crit}
Given a del Pezzo $\mathbb{K}^*$-surface $X=X(A,P)$ and
$0 \le \kappa \le r$,
consider the cone $\tau_\kappa$ and the polytope
$\mathcal{B}_\kappa'$ from Construction~\ref{momentpol}.
Then we have a rational function on
$\tau_\kappa \cap \RR \times \{1\} \times \RR$
given by 
\[
\vol_\kappa(x_1,x_2)
\ := \
\vol(\mathcal{B}_\kappa'(x_1,1,x_2)).
\]
Let $z$ be the unique zero with $(z,1,0) \in \tau'_\kappa$
of 
$x_1 \mapsto \partial\vol_\kappa / \partial x_1 \, (x_1,1,0)$.
If the anticanonical cone link of $X$ arising
from Proposition~\ref{prop:affinecone}
admits a Sasaki-Einstein metric, then 
$\partial\vol_\kappa / \partial x_2 \, (z,1,0) < 0$
for every special $\kappa = 0, \ldots, r$.
\end{remark}

\begin{example}
\label{example:lbsp8}
Consider once more $X = X(A,P,\Sigma)$ from~\ref{example:lbsp1}
and~\ref{example:lbsp2}.
As in Example~\ref{example:lbsp6}, we take anticanonical cone $X'$
over $X$ arising from
$$
P'
\ = \
\left[
\begin{array}{rrrrr}
-2 & -1 & 1 & 1 & 0
\\
-2 & -1 & 0 & 0 & 2
\\
3  & -1 & 0 & -1 & 1
\\
1  &  1 & 0 &  0 & 1
\end{array}
\right].
$$
We look at $\kappa=0$, which is special.
The volume function defined by $\mathcal{B}_0'(\xi)$
for $\xi=(\xi_1,1,\xi_3)$ is explicitly given as 
\[
\vol (\mathcal{B}_0'(\xi))
\ = \
\frac{4}{{\left(2 \xi_{1} + \xi_{3} - 4\right)} {\left(\xi_{1} + 1\right)} {\left(\xi_{3} - 2\right)}}
-
\frac{28}{{\left(2 \xi_{1} + \xi_{3} - 4\right)} {\left(\xi_{1} + 4 \xi_{3} + 5\right)} {\left(\xi_{1} + 1\right)}}.
\]
The unique $(\xi_1,1,0) \in \tau_0$ minimizing this volume function
is given by the unique zero $\xi_1^0$ of the partial derivative
w.r.t. $\xi_1$ satisfying $(\xi_1^0,1,0) \in \tau_0$. We obtain
\[
0.64082  < \xi_1^0 < 0.64096,
\qquad
0.00923
< 
\frac{d \vol(\mathcal{B}'_0(\xi_1,1,\xi_3))}{d\xi_3} (\xi_1^0,1,0)
<
0.00963,
\]
using numerical methods and interval arithmetic.
Thus, Theorem~\ref{thm:se-crit} tells us that
there is no Sasaki-Einstein metric on the
anticanonical cone link of $X = X(A,P,\Sigma)$.
%
%
\end{example}

\begin{proof}[Proof of Theorems~\ref{thm:main1} and~\ref{thm:main2}].
The defining data $(A,P)$ for the log del Pezzo $\CC^*$-surfaces in
question are provided by the classifications~\cite{HH,HHHS};
see also the database~\cite{HHS}.
For any given defining matrix $P$,
the criteria for existence of K\"ahler-Einstein
metrics~\ref{thm:ke-crit},
K\"ahler-Ricci solitons~\ref{thm:krs-crit}
and Sasaki-Einstein metrics~\ref{thm:se-crit}
can be checked computationally, following the
lines of the Examples~\ref{example:lbsp6},
\ref{example:lbsp7} and~\ref{example:lbsp8}.
\end{proof}



\begin{bibdiv}
\begin{biblist}

\bib{AlNi}{article}{
    author={Alekseev, V. A.},
    author={Nikulin, V. V.},
    title={Classification of del Pezzo surfaces with log-terminal
    singularities of index $\le 2$ and involutions on $K3$ surfaces},
    language={Russian},
    journal={Dokl. Akad. Nauk SSSR},
    volume={306},
    date={1989},
    number={3},
    pages={525--528},
    issn={0002-3264},
    translation={
       journal={Soviet Math. Dokl.},
       volume={39},
       date={1989},
       number={3},
       pages={507--511},
       issn={0197-6788},
    },
 }

 \bib{AlHa}{article}{
   author={Altmann, Klaus},
   author={Hausen, J\"{u}rgen},
   title={Polyhedral divisors and algebraic torus actions},
   journal={Math. Ann.},
   volume={334},
   date={2006},
   number={3},
   pages={557--607},
   issn={0025-5831},
}

\bib{AlHaSu}{article}{
   author={Altmann, Klaus},
   author={Hausen, J\"{u}rgen},
   author={S\"{u}ss, Hendrik},
   title={Gluing affine torus actions via divisorial fans},
   journal={Transform. Groups},
   volume={13},
   date={2008},
   number={2},
   pages={215--242},
   issn={1083-4362},
}
  
\bib{Arau}{article}{
 author = {Araujo, Carolina},
 title = {K\"ahler-Einstein Metrics for Some Quasi-Smooth Log Del Pezzo Surfaces},
 journal = {Transactions of the American Mathematical Society},
 number = {11},
 pages = {4303--4312},
 publisher = {American Mathematical Society},
 volume = {354},
 year = {2002},
 issn = {00029947},
}

\bib{ArDeHaLa}{book}{
   author={Arzhantsev, Ivan},
   author={Derenthal, Ulrich},
   author={Hausen, J\"urgen},
   author={Laface, Antonio},
   title={Cox rings},
   series={Cambridge Studies in Advanced Mathematics},
   volume={144},
   publisher={Cambridge University Press, Cambridge},
   date={2015},
   pages={viii+530},
}

\bib{BFGK}{book}{
   author={Baum, Helga},
   author={Friedrich, Thomas},
   author={Grunewald, Ralf},
   author={Kath, Ines},
   title={Twistors and Killing spinors on Riemannian manifolds},
   series={Teubner-Texte zur Mathematik [Teubner Texts in Mathematics]},
   volume={124},
   note={With German, French and Russian summaries},
   publisher={B. G. Teubner Verlagsgesellschaft mbH, Stuttgart},
   date={1991},
   pages={180},
   isbn={3-8154-2014-8},
}

\bib{Berman}{article}{
   author={Berman, Robert J.},
   title={K-polystability of $\mathbb{Q}$-Fano varieties admitting
   K\"{a}hler-Einstein metrics},
   journal={Invent. Math.},
   volume={203},
   date={2016},
   number={3},
   pages={973--1025},
   issn={0020-9910},
}

\bib{BlLiXuZh}{article}{
  title = {The existence of the Kähler-Ricci soliton degeneration},
  author={Blum, Harold},
  author={Liu, Yuchen},
  author={Xu, Chenyang},
  author={Zhuang, Ziquan},
  year = {2021},
  eprint={arXiv:2103.15278},
}

\bib{BoLe}{article}{
  title = {Toric Sasaki–Einstein metrics with conical singularities},
  author = {de Borbon, Martin},
  author = {Legendre, Eveline},
  journal = {Sel. Math. Math. New Ser.},
  volume = {28},
  number={3},
  pages = {paper no. 64},
}

\bib{BoGa}{article}{
   author={Boyer, Charles P.},
   author={Galicki, Krzysztof},
   title={Einstein metrics on rational homology spheres},
   journal={J. Differential Geom.},
   volume={74},
   date={2006},
   number={3},
   pages={353--362},
   issn={0022-040X},
}

\bib{BoNa}{article}{
   author={Boyer, Charles P.},
   author={Nakamaye, Michael},
   title={On Sasaki-Einstein manifolds in dimension five},
   journal={Geom. Dedicata},
   volume={144},
   date={2010},
   pages={141--156},
   issn={0046-5755},
}

\bib{CaSu}{article}{
   author={Cable, Jacob},
   author={S\"{u}\ss , Hendrik},
   title={On the classification of K\"{a}hler-Ricci solitons on Gorenstein del
   Pezzo surfaces},
   journal={Eur. J. Math.},
   volume={4},
   date={2018},
   number={1},
   pages={137--161},
}

\bib{ChPaSh1}{article}{
   author={Cheltsov, Ivan},
   author={Park, Jihun},
   author={Shramov, Constantin},
   title={Exceptional del Pezzo hypersurfaces},
   journal={J. Geom. Anal.},
   volume={20},
   date={2010},
   number={4},
   pages={787--816},
   issn={1050-6926},
}

\bib{ChPaSh2}{article}{
   author={Cheltsov, Ivan},
   author={Park, Jihun},
   author={Shramov, Constantin},
   title={Delta invariants of singular del Pezzo surfaces},
   journal={J. Geom. Anal.},
   volume={31},
   date={2021},
   number={3},
   pages={2354--2382},
   issn={1050-6926},
}

\bib{CFO}{article}{
   author={Cho, Koji},
   author={Futaki, Akito},
   author={Ono, Hajime},
   title={Uniqueness and examples of compact toric Sasaki-Einstein metrics},
   journal={Comm. Math. Phys.},
   volume={277},
   date={2008},
   number={2},
   pages={439--458},
   issn={0010-3616},
}

\bib{ColSze}{article}{
   author={Collins, Tristan C.},
   author={Sz\'{e}kelyhidi, G\'{a}bor},
   title={Sasaki-Einstein metrics and K-stability},
   journal={Geom. Topol.},
   volume={23},
   date={2019},
   number={3},
   pages={1339--1413},
   issn={1465-3060},
}

\bib{CoLiSc}{book}{
   author={Cox, David A.},
   author={Little, John B.},
   author={Schenck, Henry K.},
   title={Toric varieties},
   series={Graduate Studies in Mathematics},
   volume={124},
   publisher={American Mathematical Society, Providence, RI},
   date={2011},
   pages={xxiv+841},
}

\bib{Dan}{article}{
   author={Danilov, V. I.},
   title={The geometry of toric varieties},
   language={Russian},
   journal={Uspekhi Mat. Nauk},
   volume={33},
   date={1978},
   number={2(200)},
   pages={85--134, 247},
}

\bib{Donaldson}{article}{
   author={Donaldson, S. K.},
   title={Scalar curvature and stability of toric varieties},
   journal={J. Differential Geom.},
   volume={62},
   date={2002},
   number={2},
   pages={289--349},
   issn={0022-040X},
}

\bib{FlZa}{article}{
   author={Flenner, Hubert},
   author={Zaidenberg, Mikhail},
   title={Normal affine surfaces with $\mathbb{C}^*$-actions},
   journal={Osaka J. Math.},
   volume={40},
   date={2003},
   number={4},
   pages={981--1009},
   issn={0030-6126},
}

\bib{FrKa}{article}{
   author={Friedrich, Thomas},
   author={Kath, Ines},
   title={Einstein manifolds of dimension five with small first eigenvalue
   of the Dirac operator},
   journal={J. Differential Geom.},
   volume={29},
   date={1989},
   number={2},
   pages={263--279},
   issn={0022-040X},
}

\bib{FuYa}{article}{
   author={Fujita, Kento},
   author={Yasutake, Kazunori},
   title={Classification of log del Pezzo surfaces of index three},
   journal={J. Math. Soc. Japan},
   volume={69},
   date={2017},
   number={1},
   pages={163--225},
   issn={0025-5645},
}

\bib{Ful}{book}{
   author={Fulton, William},
   title={Introduction to toric varieties},
   series={Annals of Mathematics Studies},
   volume={131},
   note={The William H. Roever Lectures in Geometry},
   publisher={Princeton University Press, Princeton, NJ},
   date={1993},
   pages={xii+157},
   isbn={0-691-00049-2},
}

\bib{FOW}{article}{
   author={Futaki, Akito},
   author={Ono, Hajime},
   author={Wang, Guofang},
   title={Transverse K\"{a}hler geometry of Sasaki manifolds and toric
   Sasaki-Einstein manifolds},
   journal={J. Differential Geom.},
   volume={83},
   date={2009},
   number={3},
   pages={585--635},
   issn={0022-040X},
}

\bib{HHHS}{article}{
  author = {H\"{a}ttig, Daniel},
  author = {Hafner, Beatrix},
  author = {Hausen, J\"{u}rgen},
  author = {Springer, Justus},
  title = {Del Pezzo surfaces of Picard number one admitting a torus action},
  year = {2022},
  eprint={arXiv:2207.14790},
}

\bib{HH}{article}{
  author = {H\"{a}ttig, Daniel},
  author = {Hausen, J\"{u}rgen},
  author = {Springer, Justus},
  title = {Classifying log del Pezzo surfaces with torus action},
  year = {2023},
  eprint={arXiv:2302.03095},
}

\bib{HHS}{article}{
  author = {H\"{a}ttig, Daniel},
  author = {Hausen, J\"{u}rgen},
  author = {Springer, Justus},
  title = {A database for log del Pezzo surfaces with torus action},
  year = {2023},
  eprint={https://www.math.uni-tuebingen.de/forschung/algebra/ldp-database/index.html},
  }

\bib{HaHe}{article}{
   author={Hausen, J\"{u}rgen},
   author={Herppich, Elaine},
   title={Factorially graded rings of complexity one},
   conference={
      title={Torsors, \'{e}tale homotopy and applications to rational points},
   },
   book={
      series={London Math. Soc. Lecture Note Ser.},
      volume={405},
      publisher={Cambridge Univ. Press, Cambridge},
   },
   date={2013},
   pages={414--428},
}

\bib{HaHiWr}{article}{
   author={Hausen, J\"{u}rgen},
   author={Hische, Christoff},
   author={Wrobel, Milena},
   title={On torus actions of higher complexity},
   journal={Forum Math. Sigma},
   volume={7},
   date={2019},
   pages={e38},
}

\bib{HaSu}{article}{
   author={Hausen, J\"{u}rgen},
   author={S\"{u}\ss , Hendrik},
   title={The Cox ring of an algebraic variety with torus action},
   journal={Adv. Math.},
   volume={225},
   date={2010},
   number={2},
   pages={977--1012},
   issn={0001-8708},
}

\bib{HaWr}{article}{
   author={Hausen, J\"{u}rgen},
   author={Wrobel, Milena},
   title={Non-complete rational $T$-varieties of complexity one},
   journal={Math. Nachr.},
   volume={290},
   date={2017},
   number={5-6},
   pages={815--826},
   issn={0025-584X},
}

\bib{HaLi}{article}{
title={On the Yau-Tian-Donaldson conjecture for
       generalized Kähler-Ricci soliton equations},
  author={Han, Jiyuan},
  author={Li, Chi},
  year = {2020},
  eprint={arXiv:2006.00903},
}

\bib{HiWa}{article}{
   author={Hidaka, Fumio},
   author={Watanabe, Keiichi},
   title={Normal Gorenstein surfaces with ample anti-canonical divisor},
   journal={Tokyo J. Math.},
   volume={4},
   date={1981},
   number={2},
   pages={319--330},
   issn={0387-3870},
}

\bib{IlSu}{article}{
   author={Ilten, Nathan},
   author={S\"{u}ss, Hendrik},
   title={K-stability for Fano manifolds with torus action of complexity~1},
   journal={Duke Math. J.},
   volume={166},
   date={2017},
   number={1},
   pages={177--204},
   issn={0012-7094},
}

\bib{KiPa}{article}{
   author={Kim, In-Kyun},
   author={Park, Jihun},
   title={Log canonical thresholds of complete intersection log del Pezzo
   surfaces},
   journal={Proc. Edinb. Math. Soc. (2)},
   volume={58},
   date={2015},
   number={2},
   pages={445--483},
   issn={0013-0915},
}

\bib{KiWo}{article}{
   author={Kim, In-Kyun},
   author={Won, Joonyeong},
   title={Unstable singular del Pezzo hypersurfaces with lower index},
   journal={Comm. Algebra},
   volume={49},
   date={2021},
   number={6},
   pages={2679--2688},
   issn={0092-7872},
}

\bib{Kol1}{article}{
   author={Koll\'{a}r, J\'{a}nos},
   title={Einstein metrics on five-dimensional Seifert bundles},
   journal={J. Geom. Anal.},
   volume={15},
   date={2005},
   number={3},
   pages={445--476},
   issn={1050-6926},
}

\bib{Kol2}{article}{
   author={Koll\'{a}r, J\'{a}nos},
   title={Positive Sasakian structures on 5-manifolds},
   conference={
      title={Riemannian topology and geometric structures on manifolds},
   },
   book={
      series={Progr. Math.},
      volume={271},
      publisher={Birkh\"{a}user Boston, Boston, MA},
   },
   date={2009},
   pages={93--117},
}

\bib{KoJo}{article}{
   author={Johnson, J. M.},
   author={Koll\'{a}r, J.},
   title={K\"{a}hler-Einstein metrics on log del Pezzo surfaces in weighted
   projective 3-spaces},
   language={English, with English and French summaries},
   journal={Ann. Inst. Fourier (Grenoble)},
   volume={51},
   date={2001},
   number={1},
   pages={69--79},
   issn={0373-0956},
}

 \bib{Li}{article}{
    author={Li, Chi},
    title={$G$-uniform stability and K\"{a}hler-Einstein metrics on Fano
    varieties},
    journal={Invent. Math.},
    volume={227},
    date={2022},
    number={2},
    pages={661--744},
    issn={0020-9910},
 }

\bib{LiuPe}{article}{
   author={Liu, Yuchen},
   author={Petracci, Andrea},
   title={On K-stability of some del Pezzo surfaces of Fano index 2},
   journal={Bull. Lond. Math. Soc.},
   volume={54},
   date={2022},
   number={2},
   pages={517--525},
   issn={0024-6093},
}

\bib{LWX}{article}{
   author={Li, Chi},
   author={Wang, Xiaowei},
   author={Xu, Chenyang},
   title={Algebraicity of the metric tangent cones and equivariant
   K-stability},
   journal={J. Amer. Math. Soc.},
   volume={34},
   date={2021},
   number={4},
   pages={1175--1214},
   issn={0894-0347},
}

\bib{LiXu}{article}{
 author={Li, Chi},
 author={Xu, Chenyang},
 title = {Stability of valuations: higher rational rank},
 journal = {Peking Math. J.},
 Volume = {1},
 Number = {1},
 Pages = {1--79},
 Year = {2018},
}

\bib{Ma}{article}{
   author={Mabuchi, Toshiki},
   title={Einstein-K\"{a}hler forms, Futaki invariants and convex geometry on
   toric Fano varieties},
   journal={Osaka J. Math.},
   volume={24},
   date={1987},
   number={4},
   pages={705--737},
   issn={0030-6126},
}
	
\bib{MaNa}{article}{
   author={Mabuchi, Toshiki},
   author={Nakagawa, Yasuhiro},
   title={New examples of Sasaki-Einstein manifolds},
   journal={Tohoku Math. J. (2)},
   volume={65},
   date={2013},
   number={2},
   pages={243--252},
   issn={0040-8735},
}

\bib{JM}{article}{
   author={Jonsson, Mattias},
   author={Musta\c{t}\u{a}, Mircea},
   title={Valuations and asymptotic invariants for sequences of ideals},
   language={English, with English and French summaries},
   journal={Ann. Inst. Fourier (Grenoble)},
   volume={62},
   date={2012},
   number={6},
   pages={2145--2209 (2013)},
   issn={0373-0956},
 }

\bib{Nak}{article}{
    author={Nakayama, Noboru},
    title={Classification of log del Pezzo surfaces of index two},
    journal={J. Math. Sci. Univ. Tokyo},
    volume={14},
    date={2007},
    number={3},
    pages={293--498},
    issn={1340-5705},
}

 \bib{PaWo}{article}{
   author={Park, Jihun},
   author={Won, Joonyeong},
   title={Simply connected Sasaki-Einstein rational homology 5-spheres},
   journal={Duke Math. J.},
   volume={170},
   date={2021},
   number={6},
   pages={1085--1112},
   issn={0012-7094},
}

\bib{ShiZhu}{article}{
   author={Shi, Yalong},
   author={Zhu, Xiaohua},
   title={K\"{a}hler-Ricci solitons on toric Fano orbifolds},
   journal={Math. Z.},
   volume={271},
   date={2012},
   number={3-4},
   pages={1241--1251},
   issn={0025-5874},
}

\bib{Picturebook}{article}{
   author={S\"{u}ss, Hendrik},
   title={Fano threefolds with 2-torus action: a picture book},
   journal={Doc. Math.},
   volume={19},
   date={2014},
   pages={905--940},
   issn={1431-0635},
   review={\MR{3262075}},
}

\bib{Suess}{article}{
   author={S\"{u}\ss , H.},
   title={On irregular Sasaki-Einstein metrics in dimension 5},
   language={Russian, with Russian summary},
   journal={Mat. Sb.},
   volume={212},
   date={2021},
   number={9},
   pages={75--93},
   issn={0368-8666},
   translation={
      journal={Sb. Math.},
      volume={212},
      date={2021},
      number={9},
      pages={1261--1278},
      issn={1064-5616},
   },
 }

\bib{Ti1}{article}{
   author={Tian, G.},
   title={On Calabi's conjecture for complex surfaces with positive first
   Chern class},
   journal={Invent. Math.},
   volume={101},
   date={1990},
   number={1},
   pages={101--172},
   issn={0020-9910},
}

\bib{TiYa}{article}{
   author={Tian, Gang},
   author={Yau, Shing-Tung},
   title={K\"{a}hler-Einstein metrics on complex surfaces with $C_1>0$},
   journal={Comm. Math. Phys.},
   volume={112},
   date={1987},
   number={1},
   pages={175--203},
 }

\bib{WaZh}{article}{
   author={Wang, Xu-Jia},
   author={Zhu, Xiaohua},
   title={K\"{a}hler-Ricci solitons on toric manifolds with positive first Chern
   class},
   journal={Adv. Math.},
   volume={188},
   date={2004},
   number={1},
   pages={87--103},
   issn={0001-8708},
}

\bib{WZZ}{article}{
   author={Wang, Feng},
   author={Zhou, Bin},
   author={Zhu, Xiaohua},
   title={Modified Futaki invariant and equivariant Riemann-Roch formula},
   journal={Adv. Math.},
   volume={289},
   date={2016},
   pages={1205--1235},
   issn={0001-8708},
 }

\end{biblist}
\end{bibdiv}

\end{document}